\newcommand{\N}{\mathbb{N}}
\newcommand{\R}{\mathbb{R}}
\newcommand{\ept}{\mathbb{E}}	
\theoremstyle{plain}
\newtheorem{theorem}{Theorem}
\newtheorem{proposition}[theorem]{Proposition}
\newtheorem{corollary}[theorem]{Corollary}
\newtheorem{lemma}[theorem]{Lemma}
\theoremstyle{definition}
\newtheorem{definition}[theorem]{Definition}
\newtheorem{remark}[theorem]{Remark}
\begin{document}

\title[]{Two-dimensional signal-dependent parabolic-elliptic Keller-Segel system and its mean-field derivation}
\date{\today}

\author{Lukas Bol}
\address{School of Business Informatics and Mathematics, Universität Mannheim, 68131, Mannheim, Germany}
\email{lukas.bol@uni-mannheim.de}

\author{Li Chen}
\address{School of Business Informatics and Mathematics, Universität Mannheim, 68131, Mannheim, Germany}
\email{li.chen@uni-mannheim.de}

\author{Yue Li}
\address{School of Mathematics and Statistics, Shandong Normal University, Jinan 250014, China}
\address{Institute of Analysis and Scientific Computing, Technische Universit\"at, Wiedner Hauptstra\ss e 8-10, 1040 Wien, Austria}
\email{liyue2011008@163.com}

\begin{abstract}
In this paper, the well-posedness of two-dimensional signal-dependent Keller-Segel system and its mean-field derivation from a interacting particle system on the whole space are investigated. The signal dependence effect is reflected by the fact that the diffusion coefficient in the particle system depends nonlinearly on the interactions between the individuals. Therefore, the mathematical challenge in studying the well-posedness of this system lies in the possible degeneracy and the aggregation effect when the concentration of signal becomes unbounded. The well-established method on bounded domains, to obtain the appropriate estimates for the signal concentration, is invalid for the whole space case. Motivated by the entropy minimization method and Onofri's inequality, which has been successfully applied for the parabolic-parabolic Keller-Segel system, we establish a complete entropy estimate benefited from the linear diffusion term, which plays an important role in obtaining the $L^p$ estimates for the solution. Furthermore, the upper bound for the concentration of signal is obtained.
Based on the estimates we obtained for the density of bacteria, the rigorous mean-field derivation is proved by introducing an intermediate particle system with a mollified interaction potential with logarithmic scaling.
By using this mollification, we obtain the convergence of the particle trajectories in expectation, which implies the weak propagation of chaos.
Additionally, under a regularity assumption of the initial data, we infer higher regularity for the solutions,
which allows us to use the relative entropy method to derive the strong $L^1$ convergence for the propagation of chaos.
\end{abstract}

\keywords{ Interacting particle system; Keller-Segel model; Degeneracy; Mean-field limit; Propagation of chaos.}
\subjclass[2010]{35K57,35K45,60J70,82C22.}
\maketitle

\pagenumbering{arabic}

\section{Introduction}
In research article \cite{FTLHHL}, the authors introduced a signal-dependent Keller-Segel system which describes the emergence of stripe patterns via a so-called self-trapping mechanism.
This process, extensively examined through experimental techniques in synthetic biology, involves Escherichia coli secreting a signaling molecule, acyl-homoserine lactone (AHL).
At low AHL concentrations, these bacteria are highly mobile, engaging in random motion through standard swimming and tumbling behaviors with minimal disruption.
However, as AHL levels rise, the bacterial population's behavior undergoes notable shifts, culminating in a predominantly static state on a macroscopic scale.
The general signal-dependent Keller-Segel model introduced in the literature reads as
\begin{align}
	\label{signal-dependent}
	\begin{cases}
		\partial_t u=\Delta(\gamma(v)u),    \\
		\tau \partial_t v-\Delta v+v=u,
	\end{cases}
\end{align}
where $\tau=0,1$, $u$ and $v$ stand for the density of bacteria (or cells) and the concentration of chemical signals respectively,
and $\gamma(v)$ represents the signal-dependent motility which satisfies $\gamma'(v)\leq 0$. As one could immediately observe, due to the fact $\Delta(\gamma(v)u)=\nabla\cdot (\gamma(v)\nabla u+\gamma'(v)\nabla v u)$, the key challenge in analyzing this system lies in the possible degeneracy of $\gamma(v)$ and the aggregation effect from $\gamma'(v)\nabla v$, when the signal concentration $v$ and its gradient $\nabla v$ become unbounded.

System \eqref{signal-dependent} has been intensively studied in the last decade on bounded domains.
Under the assumption that there are upper and lower bounds for the signal-dependent motility $\gamma(v)$ and $\gamma'(v)$,
Tao and Winkler \cite{TW1} established the existence of global classical solutions for the two-dimensional case,
whereas the system admits weak solutions in higher dimensions.
By imposing additional constraints on the initial data, one can obtain classical solutions for the three-dimensional case.
Two different ways have been introduced in the literature to overcome the difficulty caused by the degenerate motility.
An upper bound for $v$ is usually obtained by using energy inequalities and the elliptic/parabolic regularity theory.
However, this approach has its limitations and is primarily applicable to specific cases of $\gamma(v)$ \cite{AY, YK, JW},
or when logistic source terms are present \cite{JKW, LY, WW}.
Alternatively, introducing an auxiliary elliptic problem satisfying the comparison principle can lead to an explicit upper bound for $v$
\cite{FJ1, FJ2, FS22, LJ}.
Furthermore, \cite{FJ3, J1, JL21} refined this approach by introducing the Alikakos-Moser-type iteration method to yield uniform-in-time
upper bounds for $v$.

Within the context of a two-dimensional bounded domain,
the well-posedness of solutions to the signal-dependent Keller-Segel system \eqref{signal-dependent} has received considerable attention.
Fujie and Jiang \cite{FJ1} emphasized that as long as $\gamma$ satisfies
\begin{align*}
0<\gamma(\cdot)\in C^3[0,\infty),\;\gamma'(\cdot)\leq 0\;{\rm{on}}\; (0,+\infty),\;\lim_{s\rightarrow +\infty}\gamma(s)=0,
\end{align*}
the system \eqref{signal-dependent} admits global classical solutions for any initial data.
Moreover, Fujie and Jiang \cite{FJ3} proved that if the motility decreases more slowly than the standard exponential level at high concentrations,
classical solutions of \eqref{signal-dependent} remain uniformly bounded in time.
In addition, the authors also obtained that if $\lim_{s\rightarrow +\infty}e^{\alpha s}\gamma(s)=+\infty$ for any $\alpha>0$, the classical solution to the system
\eqref{signal-dependent} exists globally and remains bounded uniformly in time.
Remarkably, in cases $\gamma(v)$ decays exponentially, i.e. $\gamma(v)=e^{-v}$,
the system \eqref{signal-dependent} exhibits a shared Lyapunov functional with the classical Keller-Segel model.
The authors \cite{FJ1, FJ2, JW} derived the critical mass for model \eqref{signal-dependent} with $\gamma(v)=e^{-\alpha v}$ $(\alpha>0)$ in the two-dimensional setting,
and proved that solutions remain uniformly bounded when the initial bacteria mass is below this critical mass.
Conversely, studies in \cite{BLT, FJ1, FJ2} indicate that solutions blow-up as time approaches infinity when the initial bacteria mass exceeds the critical mass.
Nevertheless, it is impossible to blow-up within finite time intervals, which is a difference from the Keller-Segel model.
There is so far, to the authors' knowledge, no analytical results for the signal-dependent Keller-Segel model on the whole space. The key motivation to study this model on the whole space case is to understand the signal-dependent effect on the microscopic level and to rigorously prove its macroscopic limit.

In this paper, we consider the case $\tau=0$, which represents very fast diffusion of the chemical signals.
Our goal is to present the rigorous derivation of the following signal-dependent Keller-Segel system in two dimensions:
\begin{align}
	\label{singalKellerSegel}
	\begin{cases}
		\partial_t u=\Delta( e^{-v}u+ u),    \\
		-\Delta v+v=\chi u, \\
		u(0,x)=u_0(x), \qquad x\in \R^2,\;\; t>0,
	\end{cases}
\end{align}
where $\chi$ is a positive constant describing the strength rate of signals.
An important feature of this system is the conservation of mass $\int_{\R^2}u(t,x)\,dx=\int_{\R^2}u_0(x)\,dx=1$, for any $t>0$, which satisfies the basic property in establishing a mean-field approximation of it.
The second equation in the system \eqref{singalKellerSegel} can be rewritten as $v=\Phi\ast u$, where $\Phi:=\chi\tilde{\Phi}$ is defined via the Yukawa
potential $\tilde{\Phi}$ verifying $\nabla\tilde{\Phi}\in L^q(\R^2)$ for $1\le q<2$ and $\tilde{\Phi}\in L^p(\R^2)$ for $1\le p<\infty$ \cite{LL}.
This also explains that the time evolution of bacteria is mainly driven by the interactions with other bacteria through a given signal potential. Since the potential $\Phi$ has strong singularity at the origin, we will use a mollification of it on the microscopic level.

We plan to derive \eqref{singalKellerSegel} rigorously from a system of interacting stochastic differential equations,
which describes the movements of $N$ particles $(X^\varepsilon_{N,i})_{1\leq i\leq N}$ in $\R^2$:
\begin{align}
\label{generalized_regularized_particle_model}
\begin{cases}
dX^\varepsilon_{N,i}
= \Big(2\exp\Big(-\displaystyle\frac{1}{N}\displaystyle\sum_{j=1}^N\Phi^\varepsilon(X_{N,i}^\varepsilon-X^\varepsilon_{N,j}) \Big)+2\Big)^{1/2} dB_i(t) , \\
X^\varepsilon_{N,i}(0) =\zeta_i,\qquad 1\leq i\leq N,
\end{cases}
\end{align}
where we use $\Phi^\varepsilon$ to denote the approximation of $\Phi$, i.e.  $\Phi^\varepsilon:= \Phi * j^\varepsilon$. Here $j^\varepsilon (x) := \frac{1}{\varepsilon^2 } j(x / \varepsilon)$ is a standard mollification kernel.
We take the standard setting from stochastic analysis. Let $(\Omega, \mathcal{F}, (\mathcal{F}_{t\geq 0}) , \mathbb{P})$ be a complete filtered probability space,
 $2$-dimensional $\mathcal{F}_t$-Brownian motions $\{ (B_i(t))_{t\geq 0} \}_{i=1}^\infty$ be assumed to be independent of each other, the initial data $\{\zeta_i\}_{i=1}^\infty$ be independent and identically distributed (i.i.d.) random variables, which are also independent of $\{ (B_i(t))_{t\geq 0} \}_{i=1}^\infty$. Let  $u_0$ be the common density of $\{\zeta_i\}_{i=1}^\infty$.

The concept of mean-field theory and propagation of chaos originated in statistical physics, particularly in the study of the Vlasov equation.
Reviews on this topic can be found in \cite{golse2016dynamics,jabin2014review}.
For an overview of mean-field theory of stochastic interacting particle systems, we refer to \cite{jabin2017mean,sznitman1991topics}.
In mean-field theory, especially when dealing with smooth interactions, the classical approach relies on direct trajectory estimates.
However, when dealing with singular interaction potentials,
additional assumptions on the initial data or cut-off potentials may be necessary to enhance the classical solution theory for the corresponding particle system.
When the interaction potential is of Coulomb type,
Lazarovici and Pickl \citep{lazarovici2017mean} studied a particle system with a cut-off potential and random initial data.
They established the convergence of particle trajectories in the sense of probability, thereby directly implying the propagation of chaos.
Further research on this topic, utilizing various cut-off techniques for different models, can be found in \citep{boers2016mean,bolley2011stochastic,garcia2017microscopic,chen2017mean,chen2020combined,godinho2015propagation,lazarovici2017mean}.
Oelschl\"ager explored moderately interacting particle systems in \citep{oelschlager1990large}, aiming to derive equations of the porous medium type.
Philipowski later extended this approach in \citep{philipowski2007interacting} by introducing logarithmic scaling to arrive at the porous medium equation.
This method has since been further generalized by \citep{chen2019rigorous} to derive a cross-diffusion system, by \citep{chen2021rigorous} to obtain the SKT system (where moderate interactions influence the diffusion coefficient), and by \citep{chen2021analysis} to formulate a non-local porous medium equation.
For a more comprehensive understanding of interacting particle systems with singular potentials and their generalizations,
refer to \citep{jabin2017mean} and the citations therein.

To derive rigorously \eqref{singalKellerSegel} from \eqref{generalized_regularized_particle_model},
we introduce an intermediate particle problem, which is formally viewed as a mean-field limit $N\rightarrow \infty$ in the system \eqref{generalized_regularized_particle_model} for fixed
$\varepsilon>0$, namely
\begin{align}
\label{generalized_intermediate_particle_model}
\begin{cases}
d\bar{X}^\varepsilon_{i} = \big(2\exp(-\Phi^\varepsilon\ast u^\varepsilon(t, \bar{X}^\varepsilon_{i}))+2\big)^{1/2} dB_i(t) ,\\
\bar{X}^\varepsilon_{i}(0) =\zeta_i,\qquad 1\leq i\leq N,
\end{cases}
\end{align}
where $ u^\varepsilon(t, x)$ is the probability density of $\bar{X}^\varepsilon_{i}$ and solves the following non-local partial differential equations
\begin{align}
\label{kellersegelmedium}
\begin{cases}\partial_t u^\varepsilon =  \Delta ( e^{-{v^\varepsilon}}u^\varepsilon+ u^\varepsilon),     \\
-\Delta {v^\varepsilon}+{v^\varepsilon}=\chi u^\varepsilon\ast j^\varepsilon,  \\
u^\varepsilon(0,x)=u_0(x), \qquad x\in \R^2,\;\; t>0.
\end{cases}
\end{align}
Since for any fixed $\varepsilon$, notice that $\|u^\varepsilon(t,\cdot)\|_{L^1(\R^2)}=1$, we have $e^{-v^\varepsilon}=e^{-\Phi^\varepsilon \ast u^\varepsilon}$ is bounded, thus it is standard to show, by the classical parabolic/elliptic theory, that system \eqref{kellersegelmedium} has a unique global smooth non-negative solution $(u^\varepsilon,{v^\varepsilon})$.
The intermediate Mckean-Vlasov system \eqref{generalized_intermediate_particle_model} is an approximation of the limiting Mckean-Vlasov system as $\varepsilon\rightarrow 0$:
\begin{align}
\label{generalized_particle_model}
\begin{cases} d\hat{X}_{i} =   \big(2\exp(-\Phi\ast u(t, \hat{X}_{i}))+2\big)^{1/2} dB_i(t),  \\
\hat{X}_{i}(0) =\zeta_i,\qquad 1\leq i\leq N,
\end{cases}
\end{align}
where $ u(t, x)$ is the probability density of $\hat{X}_{i}$ and solves the partial differential equations \eqref{singalKellerSegel}.

As can be expected, due to the singularity of $\Phi$, more estimates for $u$ should be obtained so that $\Phi*u$ can be Lipschitz continuous to solve the stochastic system \eqref{generalized_particle_model}. Therefore, the first result of this paper focus on the well-posedness result of \eqref{singalKellerSegel}. We give the definition of weak solution to this problem.
\begin{definition}[Definition of weak solutions for \eqref{singalKellerSegel}]
\label{definition}
Let $T>0$ be arbitrary. The couple $(u,v)$ is called a weak solution to the problem \eqref{singalKellerSegel} on $[0,T]$
if $(u,v)$ satisfies $0\leq u$, $0\leq v$ in $(0,T)\times\R^2$,
\begin{align*}
&\partial_t u\in L^2(0,T;H^{-1}(\R^2)),\quad u\log u\in L^\infty(0,T;L^1(\R^2)), \\
&u\in L^2(0,T;H^1(\R^2))\cap L^\infty(0,T;L^p(\R^2))\;(1\leq p<\infty),\quad |x|^2u\in L^\infty(0,T;L^1(\mathbb{R}^2))\\
&v\in L^2(0,T;W^{3,2}(\R^2))\cap L^\infty(0,T;W^{2,p}(\R^2))\;(1< p<\infty),
\end{align*}
the initial condition $u(0)=u_0$ in $L^2(\R^2)$, $(u,v)$ verifies the weak formulation
\begin{align*}
\int_0^T\langle\partial_t u,\varphi\rangle\,ds
+\int_0^T\int_{\R^2}\nabla(e^{-v}u+u)\cdot\nabla \varphi\,dxds=0,
\end{align*}
for any $\varphi\in L^2(0,T;H^1(\R^2))$.
Here $\langle\cdot,\cdot\rangle$ is the dual product between $H^{-1}(\R^2)$ and $H^1(\R^2)$.
In addition, $(u,v)$ solves
\begin{align*}
-\Delta v+v=\chi u\quad \mbox{ in the strong sense of }L^2(0,T;H^1(\R^2))\cap L^\infty(0,T;L^p(\R^2))\;(1< p<\infty).
\end{align*}
\end{definition}

The first main result of this paper is the well-posedness of the problem \eqref{singalKellerSegel}
and the error estimates between the local problem \eqref{singalKellerSegel} and the non-local problem \eqref{kellersegelmedium}.
\begin{theorem}
 \label{theorempde}
For any given $T>0$, let $\chi<4/c_*$ and the probability density $u_0\geq 0$ satisfy
\begin{gather*}
 u_0\log u_0\in L^1(\R^2),\quad
u_0\in L^1(\R^2, |x|^2\,dx)\cap L^p(\R^2)\;(1\leq p<\infty),
\end{gather*}
where $c_*$ is the optimal constant in the Sobolev inequality: $\|w\|_{L^4(\R^2)}^4\leq c_*\|w\|_{L^2(\R^2)}^2\|\nabla w\|_{L^2(\R^2)}^2$.
Then the problem \eqref{singalKellerSegel} possesses the weak solutions $(u,v)$ in $[0,T]$ in the sense of Definition \ref{definition}.
In addition, $(u,v)$ satisfies, for a.e. $t\in [0,T]$,
\begin{align}\label{ad1}
\mathcal{F}(u,v)(t)+\int_0^t\int_{\R^2} ue^{-v}|\nabla(\log u-v)|^2\,dxds\leq \mathcal{F}(u,v)(0),
\end{align}
where $\mathcal{F}(u,v)(t)=\int_{\R^2}(u\log u+\frac{|\nabla v|^2}{2}+\frac{v^2}{2}-\frac{uv}{2}-\chi\frac{uv}{2})\,dx$, and for any $p\in [2,\infty)$
\begin{align}\label{ad2}
\int_{\mathbb{R}^2}&u^p(t,x)\,dx
+\frac{2(p-1)}{p}\int_0^t\int_{\mathbb{R}^2}e^{-v}|\nabla u^{p/2}|^2\,dxds
+\frac{2(p-1)}{p}\int_0^t\int_{\mathbb{R}^2}|\nabla|\nabla v|^p|^2\,dxds\nonumber\\
&+2p\int_0^t\int_{\mathbb{R}^2}|\nabla v|^{2p}\,dxds
+\frac{2(p-1)}{p}\int_0^t\int_{\mathbb{R}^2}|\nabla u^{p/2}|^2\,dxds\leq \int_{\R^2}u_0^p\,dx+C,
\end{align}
where $C$ is a positive constant dependent on $u_0$.
Moreover, it holds
\begin{align}\label{a1}
\|u^\varepsilon- u\|_{L^{\infty} (0,T; L^1 (\R^2) ) }^2
+\|u^\varepsilon- u\|_{L^{\infty} (0,T; L^2 (\R^2) ) }
+\|\nabla(u^\varepsilon-u)\|_{L^2((0,T)\times\R^2)}
\leq C\varepsilon,
\end{align}
where $C$ is a positive constant independent of $\varepsilon$.
\end{theorem}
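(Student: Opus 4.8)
The plan is to construct the weak solution of \eqref{singalKellerSegel} as the limit $\varepsilon\to0$ of the non-local system \eqref{kellersegelmedium}, whose global smooth solutions $(u^\varepsilon,v^\varepsilon)$ are available, and to obtain the rate \eqref{a1} afterwards. Everything rests on a chain of \emph{uniform-in-$\varepsilon$} a priori bounds. The elementary ones are mass conservation $\|u^\varepsilon(t)\|_{L^1}=1$ and, testing the first equation of \eqref{kellersegelmedium} against $|x|^2$ and using $e^{-v^\varepsilon}\le1$, the second moment bound $\frac{d}{dt}\int|x|^2u^\varepsilon=4\int(e^{-v^\varepsilon}+1)u^\varepsilon\le 8$.

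The crux is the entropy estimate \eqref{ad1}; this is exactly where the whole-space problem leaves the bounded-domain theory behind, and where the extra linear diffusion is decisive. Since $\tfrac12\int(|\nabla v|^2+v^2)=\tfrac\chi2\int uv$, one has $\mathcal F(u,v)=\int u\log u-\tfrac12\int uv$; writing $\partial_t u=\nabla\cdot\bigl(u\,e^{-v}\nabla(\log u-v)\bigr)+\Delta u$, differentiating $\mathcal F(u,v)$ and using the symmetry of $(-\Delta+I)^{-1}$ together with $-\Delta v+v=\chi u$, one finds $\frac{d}{dt}\mathcal F=\int(\log u-v)\partial_t u=-\int u\,e^{-v}|\nabla(\log u-v)|^2-4\int|\nabla u^{1/2}|^2+\chi\int u^2-\int uv$, the term $-4\int|\nabla u^{1/2}|^2$ being the dissipation contributed by $\Delta u$. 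Discarding $-\int uv\le0$ and bounding the aggregation term by the sharp Gagliardo--Nirenberg inequality with constant $c_*$, $\chi\int u^2\le\chi c_*\|u\|_{L^1}\|\nabla u^{1/2}\|_{L^2}^2=\chi c_*\|\nabla u^{1/2}\|_{L^2}^2$ (mass one), the hypothesis $\chi<4/c_*$ makes $\chi c_*\int|\nabla u^{1/2}|^2$ strictly weaker than the linear-diffusion dissipation, so that $\frac{d}{dt}\mathcal F+\int u\,e^{-v}|\nabla(\log u-v)|^2+(4-\chi c_*)\int|\nabla u^{1/2}|^2\le0$, which is \eqref{ad1} together with a further good term; the same computation runs at the $\varepsilon$-level uniformly in $\varepsilon$, the mollification producing only controllable corrections. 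To turn this into bounds one needs $\mathcal F$ bounded below: writing $v=\Phi\ast u$ with $\Phi=\chi\tilde\Phi$, $\tilde\Phi=-\tfrac1{2\pi}\log|\cdot|+S$ the Yukawa kernel, one has $\mathcal F=\int u\log u+\tfrac\chi{4\pi}\iint\log|x-y|\,u(x)u(y)-\tfrac\chi2\iint S(x-y)u(x)u(y)$, in which the $S$-part is controlled by the second moment and the first two terms are handled by the logarithmic Hardy--Littlewood--Sobolev / Onofri inequality — applicable since $\chi<4/c_*<8\pi$ — which yields simultaneously $\int u\log u\le C$ and $\mathcal F(t)\ge-C$. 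Feeding $\mathcal F(t)\ge-C$ back into the dissipation inequality, one obtains, uniformly in $\varepsilon$, $\int_0^T\int u^\varepsilon e^{-v^\varepsilon}|\nabla(\log u^\varepsilon-v^\varepsilon)|^2<\infty$, $\int_0^T\|\nabla (u^\varepsilon)^{1/2}\|_{L^2}^2<\infty$ and hence $\int_0^T\|u^\varepsilon\|_{L^2}^2<\infty$ (again by the sharp Gagliardo--Nirenberg inequality), plus $\|\nabla v^\varepsilon\|_{L^2}^2+\|v^\varepsilon\|_{L^2}^2\le C$.

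These furnish the $L^p$-bounds \eqref{ad2} by induction on $p\ge2$: testing the first equation with $p(u^\varepsilon)^{p-1}$, inserting $\Delta v^\varepsilon=v^\varepsilon-\chi\,u^\varepsilon\ast j^\varepsilon$ and coupling with an estimate for $\int|\nabla v^\varepsilon|^{2p}$ obtained from the elliptic equation — which supplies the extra dissipative terms $\int|\nabla v^\varepsilon|^{2p}$, $\int|\nabla|\nabla v^\varepsilon|^p|^2$ in \eqref{ad2} — one dominates the aggregation term $\chi\int(u^\varepsilon)^{p+1}e^{-v^\varepsilon}$ via the Gagliardo--Nirenberg bound $\int(u^\varepsilon)^{p+1}\le C\bigl(\int(u^\varepsilon)^p\bigr)\|\nabla(u^\varepsilon)^{p/2}\|_{L^2}^{2/p}$ (gradient exponent $2/p<2$), and closes a differential inequality $\frac{d}{dt}\int(u^\varepsilon)^p\le g_p(t)\int(u^\varepsilon)^p+\cdots$ with $g_p\in L^1(0,T)$, the integrability of $g_p$ coming exactly from the space-time bounds of the previous step (for $p=2$ from $\int_0^T\|u^\varepsilon\|_{L^2}^2<\infty$). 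Once $u^\varepsilon\in L^\infty(0,T;L^p)$ for all $p<\infty$, uniformly in $\varepsilon$, elliptic regularity gives $v^\varepsilon\in L^\infty(0,T;W^{2,p})$ and $v^\varepsilon\in L^2(0,T;W^{3,2})$ and, by $W^{1,p}(\R^2)\hookrightarrow L^\infty$ for $p>2$, the uniform $L^\infty$-bound for the signal $v^\varepsilon$; also $\partial_t u^\varepsilon$ is bounded in $L^2(0,T;H^{-1})$. With second-moment tightness and the Aubin--Lions lemma one extracts a subsequence with $u^\varepsilon\to u$ strongly in $L^2(0,T;L^2(\R^2))$, weakly in $L^2(0,T;H^1)$, and $v^\varepsilon\to v$ with $e^{-v^\varepsilon}\to e^{-v}$ boundedly a.e., which is enough to pass to the limit in the weak formulation (note $u^\varepsilon\ast j^\varepsilon\to u$) and to identify $-\Delta v+v=\chi u$; the $H^{-1}$-bound on $\partial_t u^\varepsilon$ gives $u(0)=u_0$, and weak lower semicontinuity transfers \eqref{ad1}--\eqref{ad2} to $(u,v)$, yielding a weak solution in the sense of Definition \ref{definition}.

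For the rate \eqref{a1}, put $w=u^\varepsilon-u$, $z=v^\varepsilon-v$, so $\partial_t w=\Delta(e^{-v^\varepsilon}w)+\Delta\bigl((e^{-v^\varepsilon}-e^{-v})u\bigr)+\Delta w$ and $-\Delta z+z=\chi(w\ast j^\varepsilon)+\chi(u\ast j^\varepsilon-u)$, whence $\|z\|_{H^2}\le C\bigl(\|w\|_{L^2}+\|u\ast j^\varepsilon-u\|_{L^2}\bigr)\le C\bigl(\|w\|_{L^2}+\varepsilon\|\nabla u\|_{L^2}\bigr)$. Testing the $w$-equation with $w$ and using $|e^{-v^\varepsilon}-e^{-v}|\le z$, $|\nabla e^{-v^\varepsilon}|\le|\nabla v^\varepsilon|$, the uniform $L^p$/$W^{1,p}$ bounds from the previous steps, the embedding $H^2(\R^2)\hookrightarrow W^{1,q}$, and $\|w\|_{L^4}^2\le\sqrt{c_*}\|w\|_{L^2}\|\nabla w\|_{L^2}$, all cross terms are absorbed into $-\int|\nabla w|^2-\int e^{-v^\varepsilon}|\nabla w|^2$ up to $C\|w\|_{L^2}^2+C\varepsilon^2g(t)$ with $g\in L^1(0,T)$; since $w(0)=0$, Gronwall gives $\|w\|_{L^\infty(0,T;L^2)}^2+\|\nabla w\|_{L^2((0,T)\times\R^2)}^2\le C\varepsilon^2$, and the $L^1$-part follows by interpolating with the second moment, $\|w\|_{L^1}^2\le C\|w\|_{L^2}^{4/3}\||x|^2 w\|_{L^1}^{2/3}\le C\varepsilon^{4/3}\le C\varepsilon$. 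The principal obstacle is the entropy estimate of the second step — recognising that the linear diffusion contributes precisely the dissipation needed to outweigh the destabilising coupling term under $\chi<4/c_*$, and then exploiting the resulting space-time bounds (rather than any bounded-domain device) to close the $L^p$ hierarchy; the $L^\infty$-bound on $v$, the compactness, and the rate \eqref{a1} are then comparatively routine.
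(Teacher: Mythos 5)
Your proposal is correct in outline and shares the paper's overall architecture: approximate by the nonlocal system \eqref{kellersegelmedium}, derive the entropy dissipation inequality with the coefficient $4-c_*\chi$ exactly as in the paper (testing with $\log u^\varepsilon-v^\varepsilon$, using the symmetry of the kernel and the sharp Gagliardo--Nirenberg bound on $\chi\int (u^\varepsilon)^2$), build the $L^p$ hierarchy coupled with the $|\nabla v^\varepsilon|^{2p}$ estimates from the differentiated elliptic equation, pass to the limit by Aubin--Lions with second-moment tightness and lower semicontinuity, and prove \eqref{a1} by an $L^2$ Gr\"onwall argument upgraded to $L^1$ through the second moment. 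Two of your steps, however, use genuinely different tools. For the lower bound on the entropy and the control of $\int u^\varepsilon\log u^\varepsilon$ you decompose the Yukawa kernel as $-\tfrac{1}{2\pi}\log|\cdot|+S$ and invoke the logarithmic Hardy--Littlewood--Sobolev inequality (using $\chi<4/c_*<8\pi$ and the second moment to absorb the $S$-part), whereas the paper augments the functional by the confinement term $2u^\varepsilon\log(1+|x|^2)$ (inequality \eqref{1}) and uses the entropy-minimization lemma with Onofri's inequality (Lemmas \ref{entropymini}--\ref{f-}); these are dual routes (yours in the Blanchet--Dolbeault--Perthame style, the paper's in the Calvez--Corrias style), and yours avoids the weighted energy inequality at the price of having to verify that the mollified kernel $\tilde\Phi\ast j^\varepsilon$ still obeys the log-HLS-type bound (e.g. via superharmonicity of $-\log|\cdot|$ for a radial mollifier), a point you only gesture at with ``controllable corrections''. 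For \eqref{ad2} you use the plain Gagliardo--Nirenberg bound $\int u^{p+1}\le C(\int u^p)\|\nabla u^{p/2}\|_{L^2}^{2/p}$ and close by Young plus a Gr\"onwall bootstrap in $p$; this works (at $p=2$ the coefficient is $\|u^\varepsilon\|_{L^2}^2\in L^1(0,T)$ from the entropy step, and the level-$p$ dissipation gives $\int_0^T\int u^{2p}\,dx\,dt<\infty$, which makes the coefficient integrable at level $2p$), but it must be organized as such an induction and yields \eqref{ad2} with a $u_0$- and $T$-dependent constant rather than the paper's explicit coefficients; the paper instead closes every level $p$ in one stroke via the log-weighted Gagliardo--Nirenberg inequality (Lemma \ref{GN}) combined with the uniform bound on $\int u^\varepsilon|\log u^\varepsilon|$. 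Your error estimate and limit passage match the paper's modulo the splitting of the nonlinearity; when transferring \eqref{ad1} you should still, as the paper does, justify the convergence of the initial energy $\mathcal F^\varepsilon(u^\varepsilon,v^\varepsilon)(0)$ and the lower semicontinuity of $\int u\log u$ on the whole space using the moment and $L\log L$ bounds.
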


The difficulty in studying the well-posedness of the local system \eqref{singalKellerSegel} comes from the possible aggregation effect when the gradient of signal concentration $\nabla v$ becomes unbounded.
Since the well-established methods for the bounded domain case introduced for example in \cite{FJ2,FJ3,JL21} do not apply for the whole space case, new estimates with the benefit from linear diffusion need to be developed. In the current system, we keep the diffusion effect in order to dominate the aggregation effect for the reasonable initial data. In the whole space case, a positive lower bound for $v$ can not be expected, therefore one meets substantial difficulties to get well-posedness result without linear diffusion due to the aggregation effect. Actually, with linear diffusion, system \eqref{singalKellerSegel} has a similar Lyapunov functional to the one popularly used for the classical Keller-Segel model. We refer the derivation of the energy inequality fulfilled by the Keller-Segel model to \cite{CCC}.

The main proof strategy for Theorem \ref{theorempde} includes the following steps. Under the condition that the strength of signal is relatively weak, i.e. $\chi<4/c_*$, we can show that the entropy functional $\mathcal{F}(u,v)$ decays in time. Thus inspired by the entropy minimization method and Onofri's inequality in \cite{CCC}, we obtain the estimate that $u\log u\in L^1(\R^2)$. This, together with the generalization of the Gagliardo-Nirenberg inequality, provides the basis to obtain further the $L^p (\R^2)$ estimates for $u$, which makes it possible to obtain the upper bound for $v$ by using the classical elliptic theory.
The regularity we obtained for $(u,v)$ allows us to deduce the error estimates for $u^\varepsilon-u$. Actually, these estimates are done for $u^\epsilon$ uniformly, we refer the detailed estimates to Section 2. Then the well-posedness of \eqref{singalKellerSegel} is done by using the standard compactness argument.

Based on these, we obtain the second main result of this  paper, i.e. the following propagation of chaos result:
\begin{theorem}
\label{mean-field}
Under the assumptions of Theorem \ref{theorempde},
the problem \eqref{generalized_particle_model} has a unique square integrable solution $\hat{X}_i$ with $u$ as the density of its distribution.
Furthermore, for $N$ large enough,
there exists a constant $C> 0$ independent of $N$ such that
\begin{align}\label{r3}
\max_{i = 1, \dots, N} \ept \Big( \sup_{t \in [0,T]} \big| X^\varepsilon_{N,i} - \hat{X}_i \big|^2(t) \Big) \leq  C \varepsilon^{2},
\end{align}
where the cut-off parameter
$\varepsilon=(\lambda\log N)^{-\frac{1}{4}}$ with $0<\lambda\ll 1$.
\end{theorem}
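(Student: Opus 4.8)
The plan is to prove Theorem \ref{mean-field} in three stages: first establishing well-posedness and square integrability of the limiting McKean--Vlasov system \eqref{generalized_particle_model}; second, bounding the distance between the original particle system \eqref{generalized_regularized_particle_model} and the intermediate system \eqref{generalized_intermediate_particle_model}; third, bounding the distance between the intermediate system \eqref{generalized_intermediate_particle_model} and the limiting system \eqref{generalized_particle_model}, and then combining everything with the choice $\varepsilon = (\lambda \log N)^{-1/4}$. For the first stage, from Theorem \ref{theorempde} we know $u \in L^\infty(0,T; L^p(\R^2))$ for all $p < \infty$ and $v \in L^\infty(0,T; W^{2,p}(\R^2))$, which by Sobolev embedding gives $v \in L^\infty(0,T; C^{1,\alpha}(\R^2))$ (taking $p$ large); hence $\nabla v$ is bounded and the diffusion coefficient $\sigma(t,x) := (2\exp(-\Phi\ast u(t,x)) + 2)^{1/2} = (2e^{-v} + 2)^{1/2}$ is bounded, bounded below by $\sqrt{2}$, and Lipschitz in $x$ uniformly in $t$ (since $\nabla v = \nabla(\Phi\ast u)$ is bounded and, using the $W^{3,2}$ regularity of $v$, Hölder continuous). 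Standard SDE theory then yields a unique strong square-integrable solution $\hat X_i$, and by Itô's formula applied to test functions the density of $\hat X_i$ solves \eqref{singalKellerSegel}, so it coincides with $u$ by the uniqueness embedded in Theorem \ref{theorempde}.

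For the third stage, couple $\bar X_i^\varepsilon$ and $\hat X_i$ with the same Brownian motion $B_i$ and initial datum $\zeta_i$. Then
\begin{align*}
\ept\Big(\sup_{s\le t}|\bar X_i^\varepsilon - \hat X_i|^2(s)\Big)
\le C\,\ept\int_0^t \big| (2e^{-v^\varepsilon}+2)^{1/2}(s,\bar X_i^\varepsilon) - (2e^{-v}+2)^{1/2}(s,\hat X_i)\big|^2 ds,
\end{align*}
using the Burkholder--Davis--Gundy inequality on the martingale part. Splitting the integrand through the triangle inequality into a term involving $(2e^{-v^\varepsilon}+2)^{1/2}(s,\bar X_i^\varepsilon) - (2e^{-v}+2)^{1/2}(s,\bar X_i^\varepsilon)$ and a term involving $(2e^{-v}+2)^{1/2}(s,\bar X_i^\varepsilon) - (2e^{-v}+2)^{1/2}(s,\hat X_i)$: the first is controlled by $\|v^\varepsilon - v\|_{L^\infty_x}$, which by the elliptic regularity bootstrap from \eqref{a1} (since $\|u^\varepsilon - u\|_{L^\infty(0,T;L^2)} \le C\varepsilon$ and one can upgrade to an $L^\infty$ bound on $v^\varepsilon - v$ via $W^{2,p}$ estimates) is $O(\varepsilon)$; the second is controlled by the Lipschitz constant of $\sigma$ times $|\bar X_i^\varepsilon - \hat X_i|$. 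Grönwall's inequality then gives $\max_i \ept(\sup_{t\le T}|\bar X_i^\varepsilon - \hat X_i|^2) \le C\varepsilon^2$.

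The main obstacle is the second stage: controlling $\ept(\sup_{t\le T}|X_{N,i}^\varepsilon - \bar X_i^\varepsilon|^2)$, where the mollified empirical-field interaction $\frac1N\sum_j \Phi^\varepsilon(X_{N,i}^\varepsilon - X_{N,j}^\varepsilon)$ must be compared with $\Phi^\varepsilon \ast u^\varepsilon(t, \bar X_i^\varepsilon)$. Here $\|\nabla \Phi^\varepsilon\|_{L^\infty} \lesssim \varepsilon^{-1}$ and $\|\Phi^\varepsilon\|_{L^\infty} \lesssim |\log\varepsilon|$, so after applying BDG and the Lipschitz bound for $s \mapsto (2e^{-s}+2)^{1/2}$ one must estimate
\begin{align*}
\ept\int_0^t \Big|\frac1N\sum_{j=1}^N\Phi^\varepsilon(X_{N,i}^\varepsilon - X_{N,j}^\varepsilon) - \Phi^\varepsilon\ast u^\varepsilon(s,\bar X_i^\varepsilon)\Big|^2 ds.
\end{align*}
I would split this as usual into (i) a \emph{consistency/law-of-large-numbers} term $\frac1N\sum_j[\Phi^\varepsilon(\bar X_i^\varepsilon - \bar X_j^\varepsilon) - \Phi^\varepsilon\ast u^\varepsilon(s,\bar X_i^\varepsilon)]$, whose square expectation is $O(N^{-1}\|\Phi^\varepsilon\|_{L^2(u^\varepsilon)}^2) = O(N^{-1}|\log\varepsilon|^2)$ by independence of the $\bar X_j^\varepsilon$; and (ii) a \emph{stability} term bounded using $\|\nabla\Phi^\varepsilon\|_{L^\infty} \lesssim \varepsilon^{-1}$ by $\frac{C}{\varepsilon}\big(|X_{N,i}^\varepsilon - \bar X_i^\varepsilon| + \frac1N\sum_j|X_{N,j}^\varepsilon - \bar X_j^\varepsilon|\big)$. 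Setting $R_N(t) := \max_i \ept(\sup_{s\le t}|X_{N,i}^\varepsilon - \bar X_i^\varepsilon|^2)$, the squared stability term contributes $\frac{C}{\varepsilon^2} R_N(s)$ inside the time integral, so Grönwall yields $R_N(T) \le \frac{C}{N}|\log\varepsilon|^2 \exp(CT/\varepsilon^2)$. With $\varepsilon = (\lambda\log N)^{-1/4}$ we have $\varepsilon^{-2} = \sqrt{\lambda\log N}$, so $\exp(CT/\varepsilon^2) = N^{CT\sqrt\lambda}$, which for $\lambda$ small enough is $\le N^{1/2}$, giving $R_N(T) \le C N^{-1/2}|\log\log N|^2 \le C\varepsilon^2$ for $N$ large. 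Combining the three stages via the triangle inequality $|X_{N,i}^\varepsilon - \hat X_i| \le |X_{N,i}^\varepsilon - \bar X_i^\varepsilon| + |\bar X_i^\varepsilon - \hat X_i|$ and $(a+b)^2 \le 2a^2 + 2b^2$ finishes the proof. The delicate point is that the logarithmic scaling must be matched so that the $\exp(C/\varepsilon^2)$ blow-up from the singular stability estimate is beaten by the $1/N$ gain from the law of large numbers; this forces exactly the $\varepsilon \sim (\log N)^{-1/4}$ regime with $\lambda$ sufficiently small, and the whole argument hinges on the uniform-in-$\varepsilon$ $L^p$ and $W^{2,p}$ estimates for $u^\varepsilon, v^\varepsilon$ recorded in Theorem \ref{theorempde}.
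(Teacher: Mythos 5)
Your proposal follows essentially the same route as the paper's proof: the same three-step architecture (solvability of \eqref{generalized_particle_model} from the PDE regularity, then the comparison $X^\varepsilon_{N,i}$ versus $\bar X^\varepsilon_i$, then $\bar X^\varepsilon_i$ versus $\hat X_i$), the same reduction via the Burkholder--Davis--Gundy inequality and the Lipschitz bound for $s\mapsto(2e^{-s}+2)^{1/2}$, the same splitting of the interaction error into a stability part controlled by $\|\nabla\Phi^\varepsilon\|_{L^\infty}$ and a law-of-large-numbers part whose cross terms vanish by independence of the $\bar X^\varepsilon_j$, and Gr\"onwall together with the error estimate \eqref{a1} to close. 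The differences are quantitative rather than structural: you invoke the sharper kernel bounds $\|\Phi^\varepsilon\|_{L^\infty}\lesssim|\log\varepsilon|$ and $\|\nabla\Phi^\varepsilon\|_{L^\infty}\lesssim\varepsilon^{-1}$ (correct, but they rely on the pointwise structure $\Phi\sim-\log|x|$, $|\nabla\Phi|\lesssim|x|^{-1}$ of the Yukawa potential and deserve a line of justification), whereas the paper uses the cruder Young-type bounds $\|\Phi^\varepsilon\|_{L^\infty},\|\nabla\Phi^\varepsilon\|_{L^\infty}\leq C\varepsilon^{-2}$; consequently your Gr\"onwall factor is $\exp(CT/\varepsilon^2)$ instead of the paper's $\exp(\bar CT/\varepsilon^4)$, and the smallness constraint tying $\lambda$ to $T$ essentially disappears at that step. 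In the second comparison you upgrade \eqref{a1} to $\|v^\varepsilon-v\|_{L^\infty}=O(\varepsilon)$, which does follow (e.g. from $\|\Phi^\varepsilon\ast(u^\varepsilon-u)\|_{L^\infty}\leq\|\Phi\|_{L^2}\|u^\varepsilon-u\|_{L^2}$ and $\|(\Phi^\varepsilon-\Phi)\ast u\|_{L^\infty}\leq\varepsilon\|\nabla\Phi\|_{L^{4/3}}\|u\|_{L^4}$), while the paper estimates the expectation directly via $L^{4/3}$--$L^2$--$L^4$ H\"older/Young bounds (its terms $A_4$, $A_5$, $A_6$); both yield $C\varepsilon^2$ after Gr\"onwall. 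One computational slip: with $\varepsilon=(\lambda\log N)^{-1/4}$ one has $\exp(CT/\varepsilon^2)=\exp\big(CT\sqrt{\lambda\log N}\big)$, not $N^{CT\sqrt{\lambda}}$; since the correct expression grows slower than any power of $N$, this only strengthens your conclusion, but the identity as written is incorrect and should be fixed.
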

The proof follows the well-established idea of proving mean-field limit for the moderate interacting systems, for example in \cite{philipowski2007interacting,chen2021rigorous,CGL}. Since the diffusion coefficients in the stochastic models \eqref{generalized_regularized_particle_model}, \eqref{generalized_intermediate_particle_model}, and \eqref{generalized_particle_model} depend nonlinearly on the interactions between the individuals,
 therefore it inquires the application of the Burkholder-Davis-Gundy inequality, as has been applied in \cite{chen2021rigorous} in deriving the cross-diffusion system for the logarithmic scaling. Because of this point, a propagation of chaos result with algebraic scaling such as in \cite{lazarovici2017mean}, where the maximum norm of trajectory has been used, is a challenging topic for future research.
We split the proof of Theorem \ref{mean-field} in three steps and introduce an intermediate particle system.
First, following similar strategy given in \cite{chen2021rigorous}, thanks to It\^o’s formula and the duality method, the solvabilities of stochastic differential equations \eqref{generalized_intermediate_particle_model}
and \eqref{generalized_particle_model} are given.
Second, we estimate the mean square error of the difference $X^\varepsilon_{N,i}-\bar{X}^\varepsilon_i$,
where $\bar{X}^\varepsilon_i$ is the solution to the intermediate particle system.
Finally, the estimate for $\bar{X}^\varepsilon_i-\hat{X}_i$ is done by exploiting the error estimates for $u^\varepsilon-u$.

As a consequence of Theorem \ref{mean-field}, we obtain the weak propagation of chaos result as a corollary, see \cite{sznitman1991topics}.
\begin{corollary}[Propagation of chaos in the weak sense]
\label{propagation_of_chaos}
Let $k\in\mathbb{N}$ and $P^\varepsilon_{N,k}(t)$ be the $k$-marginal of the joint distribution of  $(X^\varepsilon_{N,1},\cdots,X^\varepsilon_{N,N})$.
If the assumptions of Theorem \ref{mean-field} hold, then
\begin{align*}
  P^\varepsilon_{N,k}(t) \ \text{converges weakly to} \ P^{\otimes k}(t)\quad  \mbox{as } N \rightarrow \infty,
\end{align*}
where $P(t)$ is absolutely continuous with respect to the Lebesgue measure and has the probability density  $u(t,x)$.
\end{corollary}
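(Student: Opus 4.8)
The plan is to deduce Corollary \ref{propagation_of_chaos} from the quantitative trajectory bound \eqref{r3} of Theorem \ref{mean-field} by the classical coupling argument of Sznitman. First I would note that the limiting particles $\hat X_1,\dots,\hat X_N$ built in Theorem \ref{mean-field} are independent and identically distributed. Indeed, once $u$ is fixed (as the solution of \eqref{singalKellerSegel} provided by Theorem \ref{theorempde}), the diffusion coefficient $\big(2\exp(-\Phi\ast u(t,\cdot))+2\big)^{1/2}$ in \eqref{generalized_particle_model} is a deterministic function of $(t,x)$, which by the regularity of $u$ is Lipschitz in $x$; hence each $\hat X_i$ is the pathwise-unique strong solution and is therefore a fixed measurable functional of the pair $(\zeta_i,B_i)$ only. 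Since the $(\zeta_i,B_i)$ are i.i.d.\ across $i$, so are the $\hat X_i$, and consequently the joint law of $(\hat X_1(t),\dots,\hat X_k(t))$ equals $P^{\otimes k}(t)$, where $P(t)=\mathrm{law}(\hat X_1(t))$ has density $u(t,\cdot)$ by Theorem \ref{mean-field}. By symmetry of the coefficients in \eqref{generalized_regularized_particle_model} and the i.i.d.\ initial data, the vector $(X^\varepsilon_{N,1},\dots,X^\varepsilon_{N,N})$ is exchangeable, so $P^\varepsilon_{N,k}(t)$ is well defined independently of the choice of the $k$ indices.

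Next I would test against bounded Lipschitz functions, which form a convergence-determining class for weak convergence on the Polish space $(\R^2)^k$. Fix $t\in[0,T]$, $k\in\N$ and $\phi\colon(\R^2)^k\to\R$ bounded with Lipschitz constant $L$. Using that $P^\varepsilon_{N,k}(t)$ is the law of $(X^\varepsilon_{N,1}(t),\dots,X^\varepsilon_{N,k}(t))$ and the previous paragraph,
\begin{align*}
\Big|\int_{(\R^2)^k}\phi\,dP^\varepsilon_{N,k}(t)-\int_{(\R^2)^k}\phi\,dP^{\otimes k}(t)\Big|
&=\big|\ept\,\phi\big(X^\varepsilon_{N,1},\dots,X^\varepsilon_{N,k}\big)(t)-\ept\,\phi\big(\hat X_1,\dots,\hat X_k\big)(t)\big|\\
&\leq L\sum_{i=1}^{k}\ept\big|(X^\varepsilon_{N,i}-\hat X_i)(t)\big|
\leq Lk\max_{i=1,\dots,N}\Big(\ept\big|(X^\varepsilon_{N,i}-\hat X_i)(t)\big|^{2}\Big)^{1/2}.
\end{align*}
By \eqref{r3} the right-hand side is bounded by $C L k\,\varepsilon$ with $\varepsilon=(\lambda\log N)^{-1/4}$, hence it tends to $0$ as $N\to\infty$. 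Since this holds for every bounded Lipschitz $\phi$, we conclude $P^\varepsilon_{N,k}(t)\to P^{\otimes k}(t)$ weakly as $N\to\infty$, for each fixed $t\in[0,T]$, which is exactly the asserted statement; the absolute continuity of $P(t)$ with density $u(t,\cdot)$ is inherited from Theorem \ref{mean-field}.

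There is essentially no genuine obstacle in this step, as the substantive work is already carried out in Theorem \ref{mean-field}; the only points that require a little care are (i) that the estimate is performed at a fixed time, for which it suffices to read off the fixed-time marginal of the uniform-in-time bound \eqref{r3}, and (ii) reducing weak convergence to the bounded Lipschitz class so that the coupling estimate $\ept|X^\varepsilon_{N,i}-\hat X_i|(t)\le C\varepsilon$ can be used directly. Both are standard, so the corollary follows immediately once Theorem \ref{mean-field} is established.
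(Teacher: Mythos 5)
Your argument is correct and is exactly the standard Sznitman coupling argument that the paper invokes for this corollary (the paper gives no written proof, only the reference to \cite{sznitman1991topics}): exchangeability, the i.i.d.\ structure of the $\hat X_i$ via strong uniqueness of \eqref{generalized_particle_model}, and testing against bounded Lipschitz functions using the bound \eqref{r3}. No gaps; the fixed-time reduction and the Lipschitz test-class reduction are handled appropriately.
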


Corollary \ref{propagation_of_chaos} gives the propagation of chaos in the weak sense, but we can further derive the quantitative propagation of chaos result in the strong sense by applying the relative entropy method \cite{jabin2018quantitative} recently improved in \cite{CHH}.
The main difficulty lies in the $L^\infty$ estimate for $\nabla\log u^\varepsilon$.
We use a modified Moser iteration and properties of the elliptic equations to get the higher regularity for $(u^\varepsilon,v^\varepsilon)$,
which allow us to obtain that there exists $T^*\in (0,T)$ such that the $L^\infty$ norm of $\nabla\log u^\varepsilon$ is bounded on a time interval $(0,T^*)$.
\begin{theorem}[Propagation of chaos in the strong sense]
\label{relative}
Under the assumptions of Theorem \ref{theorempde},
we suppose that $u_0\in L^\infty(\mathbb{R}^2)$ and
$\nabla\log u_0\in W^{1,p}(\mathbb{R}^2)$ $(p>2)$,
then there exist $T^*\in (0,T)$ and the constant $C>0$ independent of $\varepsilon$
such that
$\|\nabla\log u^\varepsilon\|_{L^\infty(0,T^*;W^{1,p}(\mathbb{R}^2))}
\leq C$.
Furthermore, let $k\in\N$ and $u^\varepsilon_{N,k}(t,x_1,\cdots,x_k)$ be the $k$-th marginal density of the joint density $u^\varepsilon_N(t,x_1,\cdots,x_N)$ of $(X^\varepsilon_{N,i})_{1\leq i\leq N}$.
Then we have
\begin{align*}
\|u^\varepsilon_{N,k}-u^{\otimes k}\|^2_{L^\infty(0,T^*;L^1(\mathbb{R}^{2k}))}\leq C(k) \varepsilon.
\end{align*}
\end{theorem}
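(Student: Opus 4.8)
\emph{Proof strategy.} The argument has an analytic core — a uniform‑in‑$\varepsilon$, short‑time bound on $\nabla\log u^\varepsilon$ — followed by a relative‑entropy estimate in the spirit of \cite{jabin2018quantitative,CHH}; the analytic part is where the difficulty lies. I would first bootstrap the $\varepsilon$‑uniform estimates of Theorem \ref{theorempde}. Since $\|u^\varepsilon\ast j^\varepsilon\|_{L^p(\R^2)}\le\|u^\varepsilon\|_{L^p(\R^2)}\le C$ for all $p<\infty$, elliptic regularity for $-\Delta v^\varepsilon+v^\varepsilon=\chi\,u^\varepsilon\ast j^\varepsilon$ gives $\|v^\varepsilon\|_{W^{2,p}(\R^2)}\le C$ uniformly, hence $\|v^\varepsilon\|_{W^{1,\infty}(\R^2)}\le C$ for $p>2$. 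Rewriting the first equation of \eqref{kellersegelmedium} as $\partial_t u^\varepsilon=\nabla\cdot\big((e^{-v^\varepsilon}+1)\nabla u^\varepsilon-e^{-v^\varepsilon}u^\varepsilon\nabla v^\varepsilon\big)$, which is uniformly parabolic with bounded lower‑order coefficient, a modified Moser iteration on the whole space (accommodating the aggregation term $e^{-v^\varepsilon}u^\varepsilon\nabla v^\varepsilon$) together with $u_0\in L^\infty$ yields $\|u^\varepsilon\|_{L^\infty((0,T)\times\R^2)}\le C$ uniformly; in particular $\Delta v^\varepsilon=v^\varepsilon-\chi\,u^\varepsilon\ast j^\varepsilon$, and hence $\Delta(e^{-v^\varepsilon})$, are bounded uniformly.

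\emph{The bound on $\nabla\log u^\varepsilon$.} Put $w^\varepsilon=\log u^\varepsilon$ (smooth and finite for $t>0$ by strict positivity) and $z^\varepsilon=\nabla w^\varepsilon$. Dividing the $u^\varepsilon$‑equation by $u^\varepsilon$ gives, with $g^\varepsilon=e^{-v^\varepsilon}+1\in[1,2]$,
\[
\partial_t w^\varepsilon=g^\varepsilon\big(\Delta w^\varepsilon+|\nabla w^\varepsilon|^2\big)-2e^{-v^\varepsilon}\nabla v^\varepsilon\cdot\nabla w^\varepsilon+e^{-v^\varepsilon}\big(|\nabla v^\varepsilon|^2-v^\varepsilon+\chi\,u^\varepsilon\ast j^\varepsilon\big),
\]
and differentiating produces a quasilinear system $\partial_t z^\varepsilon=g^\varepsilon\Delta z^\varepsilon+G^\varepsilon$ with $G^\varepsilon$ first order in $z^\varepsilon$, coefficients controlled by the previous step, and forcing containing $\nabla(u^\varepsilon\ast j^\varepsilon)=(\nabla u^\varepsilon)\ast j^\varepsilon=(u^\varepsilon z^\varepsilon)\ast j^\varepsilon$ — keeping the derivative on $u^\varepsilon$ is essential, since then $\|(u^\varepsilon z^\varepsilon)\ast j^\varepsilon\|_{L^p}\le\|u^\varepsilon\|_{L^\infty}\|z^\varepsilon\|_{L^p}$ carries no negative power of $\varepsilon$. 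Estimating $G^\varepsilon$ in $L^p$ with $p>2$ (so $W^{1,p}(\R^2)\hookrightarrow L^\infty(\R^2)$) gives $\|G^\varepsilon(t)\|_{L^p}\le C\big(1+\|z^\varepsilon(t)\|_{W^{1,p}}^2\big)$, the quadratic term stemming from $g^\varepsilon|\nabla w^\varepsilon|^2$. Feeding this into parabolic maximal $L^q$‑regularity for $\partial_t z^\varepsilon-g^\varepsilon\Delta z^\varepsilon=G^\varepsilon$ and using $L^q(0,t;W^{2,p})\cap W^{1,q}(0,t;L^p)\hookrightarrow C([0,t];W^{1,p})$ for $q$ large, one obtains
\[
\sup_{[0,t]}\|z^\varepsilon\|_{W^{1,p}(\R^2)}\le C\|\nabla\log u_0\|_{W^{1,p}(\R^2)}+C\,t^{1/q}\Big(1+\sup_{[0,t]}\|z^\varepsilon\|_{W^{1,p}(\R^2)}^2\Big).
\]
A standard continuation argument then produces $T^*\in(0,T)$ and $C$, both independent of $\varepsilon$ (all constants above being so), with $\|\nabla\log u^\varepsilon\|_{L^\infty(0,T^*;W^{1,p}(\R^2))}\le C$; in particular $\nabla\log u^\varepsilon$ is bounded in $L^\infty((0,T^*)\times\R^2)$ uniformly in $\varepsilon$.

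\emph{Relative entropy and conclusion.} Let $u^\varepsilon_N$ solve the Liouville equation $\partial_t u^\varepsilon_N=\sum_{i=1}^N\Delta_{x_i}\big(a^\varepsilon_{N,i}u^\varepsilon_N\big)$, $a^\varepsilon_{N,i}=\exp(-\tfrac1N\sum_{j=1}^N\Phi^\varepsilon(x_i-x_j))+1$, while each factor of $(u^\varepsilon)^{\otimes N}$ solves $\partial_t u^\varepsilon=\Delta(a^\varepsilon u^\varepsilon)$ with $a^\varepsilon=\exp(-\Phi^\varepsilon\ast u^\varepsilon)+1$. Set $H_N(t)=\tfrac1N\int_{\R^{2N}}u^\varepsilon_N\log\big(u^\varepsilon_N/\prod_i u^\varepsilon(t,x_i)\big)\,dx$, with $H_N(0)=0$ since the $\zeta_i$ are i.i.d.\ with density $u_0=u^\varepsilon(0,\cdot)$. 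Differentiating $H_N$ and following \cite{jabin2018quantitative,CHH}, the diffusion yields a nonnegative relative‑Fisher‑information dissipation plus a remainder; expanding $a^\varepsilon_{N,i}-a^\varepsilon(x_i)$ and $\nabla_{x_i}a^\varepsilon_{N,i}-\nabla a^\varepsilon(x_i)$ (using that $s\mapsto e^{-s}$ is globally Lipschitz) and absorbing the $\nabla_{x_i}u^\varepsilon_N$‑parts into the dissipation, the remainder is bounded by terms of the type $\int u^\varepsilon_N\big(1+|z^\varepsilon(x_i)|\big)^2\big|\tfrac1N\sum_j\psi^\varepsilon(x_i,x_j)\big|^2\,dx$ with $\psi^\varepsilon\in\{\Phi^\varepsilon(\cdot-\cdot)-\Phi^\varepsilon\ast u^\varepsilon,\ \nabla\Phi^\varepsilon(\cdot-\cdot)-\nabla\Phi^\varepsilon\ast u^\varepsilon\}$, each with zero $u^\varepsilon$‑average in the second slot; this is exactly where the $L^\infty$ bound on $z^\varepsilon=\nabla\log u^\varepsilon$ from the previous step is used (to turn $\big(1+|z^\varepsilon(x_i)|\big)^2$ into a constant). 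The combinatorial large‑deviation lemma of \cite{jabin2018quantitative} applied to the mean‑zero, bounded kernels $\psi^\varepsilon$ — whose sup‑norms are $\le C\log(1/\varepsilon)$ and $\le C/\varepsilon$ respectively — gives $\int u^\varepsilon_N|\tfrac1N\sum_j\psi^\varepsilon(x_i,x_j)|^2\,dx\le \tfrac{C}{\varepsilon^2 N}(1+H_N)$, hence the Gronwall inequality $\tfrac{d}{dt}H_N\le C\,H_N+\tfrac{C}{\varepsilon^2 N}$ with $C=C(T^*)$ independent of $N$. Integrating on $[0,T^*]$ with the scaling $\varepsilon=(\lambda\log N)^{-1/4}$, so that $\varepsilon^{-2}N^{-1}=\sqrt{\lambda\log N}/N$, gives $\sup_{[0,T^*]}H_N\le C\varepsilon$ for $N$ large. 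Finally, by the sub‑additivity of the relative entropy with respect to marginals and the Csisz\'ar--Kullback--Pinsker inequality, $\|u^\varepsilon_{N,k}-(u^\varepsilon)^{\otimes k}\|_{L^1(\R^{2k})}^2\le C k\,H_N\le C(k)\varepsilon$, while a telescoping estimate and \eqref{a1} give $\|(u^\varepsilon)^{\otimes k}-u^{\otimes k}\|_{L^1(\R^{2k})}\le k\|u^\varepsilon-u\|_{L^1(\R^2)}\le Ck\varepsilon$; the triangle inequality then yields $\|u^\varepsilon_{N,k}-u^{\otimes k}\|_{L^\infty(0,T^*;L^1(\R^{2k}))}^2\le C(k)\varepsilon$.

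\emph{Main obstacle.} The crux is the $\nabla\log u^\varepsilon$ estimate: the equation for $\log u^\varepsilon$ is genuinely nonlinear through $g^\varepsilon|\nabla\log u^\varepsilon|^2$, so only a short‑time bound $T^*$ can be expected, and one must verify that no term — every convolution against $j^\varepsilon$ in particular — carries a negative power of $\varepsilon$, which forces all spatial derivatives to be moved onto $u^\varepsilon$ and makes the uniform $L^\infty$ bound on $u^\varepsilon$ indispensable; the modified Moser iteration producing that bound on $\R^2$, uniformly in $\varepsilon$ and in the presence of the aggregation drift $e^{-v^\varepsilon}u^\varepsilon\nabla v^\varepsilon$, is itself non‑routine. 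In the relative‑entropy step the only genuine subtlety is that the large‑deviation estimate must tolerate $\|\nabla\Phi^\varepsilon\|_{L^\infty}\sim\varepsilon^{-1}$ (and $\|\Phi^\varepsilon\|_{L^\infty}\sim\log(1/\varepsilon)$), which is precisely what the logarithmic scaling $\varepsilon^{-4}=\lambda\log N$ with $\lambda\ll1$ is designed to absorb.
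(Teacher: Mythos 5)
Your two-step architecture is the same as the paper's (uniform $L^\infty$ bound on $u^\varepsilon$ by a modified Moser iteration, a short-time $\varepsilon$-uniform $W^{1,p}$ bound on $\nabla\log u^\varepsilon$, then relative entropy plus Csisz\'ar--Kullback--Pinsker plus \eqref{a1}), but the central analytic step has a genuine gap as written. You derive the quasilinear system $\partial_t z^\varepsilon - g^\varepsilon\Delta z^\varepsilon = G^\varepsilon$ with $\|G^\varepsilon\|_{L^p}\lesssim 1+\|z^\varepsilon\|_{W^{1,p}}^2$ and then claim $\sup_{[0,t]}\|z^\varepsilon\|_{W^{1,p}}\le C\|\nabla\log u_0\|_{W^{1,p}}+Ct^{1/q}\bigl(1+\sup_{[0,t]}\|z^\varepsilon\|_{W^{1,p}}^2\bigr)$ via maximal $L^q(L^p)$ regularity and the embedding $L^q(0,t;W^{2,p})\cap W^{1,q}(0,t;L^p)\hookrightarrow C([0,t];W^{1,p})$ for $q$ large. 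The trace space of that maximal-regularity class is the Besov space $B^{2-2/q}_{p,q}(\R^2)$: for $q>2$ it is strictly smaller than $W^{1,p}$, so initial data that is merely $\nabla\log u_0\in W^{1,p}$ (all the theorem assumes) is not admissible, while for $q\le 2$ the embedding into $C([0,t];W^{1,p})$ is lost; no choice of $q$ makes both ends meet, so the displayed inequality is not justified and the continuation argument does not close. This is precisely where the paper takes a different route: it runs $L^p$ energy estimates at exactly the $W^{1,p}$ level (testing the system for $\mathbf p$ and $\partial_j\mathbf p$ against $|p_i|^{p-2}p_i$ and $|\partial_jp_i|^{p-2}\partial_jp_i$) and a Banach fixed point (Lemma \ref{W1p}) that also identifies $\mathbf p=\nabla\log u^\varepsilon$ — something you implicitly assume when you treat $\|z^\varepsilon(t)\|_{W^{1,p}}$ as finite. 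Your step could be repaired either by adopting those energy estimates or by an evolution-family/mild-solution argument using the smoothing bound $\|U(t,s)\|_{L^p\to W^{1,p}}\lesssim (t-s)^{-1/2}$, but neither is what you wrote. On the positive side, substituting $\Delta v^\varepsilon=v^\varepsilon-\chi u^\varepsilon\ast j^\varepsilon$ before differentiating avoids third derivatives of $v^\varepsilon$ and hence the paper's Lemma \ref{uH2}, and keeping the derivative on $u^\varepsilon$ in $\nabla(u^\varepsilon\ast j^\varepsilon)$ correctly avoids negative powers of $\varepsilon$.

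In the entropy step your route also genuinely differs from the paper's: you invoke the Jabin--Wang combinatorial large-deviation lemma for the mean-zero kernels, whereas the paper never uses it — it splits the fluctuation terms $I_1,I_2$ through the intermediate particles $\bar X^\varepsilon_i$, reuses the trajectory estimate of Proposition \ref{mean-field_est_1}, and handles the law-of-large-numbers term by the direct variance computation of \eqref{A3r}, so no Gr\"onwall in $H_N$ is needed. Your route is viable under the logarithmic scaling, but note two slips: the Gibbs-variational/large-deviation estimate yields $\int u^\varepsilon_N\bigl|\tfrac1N\sum_j\psi^\varepsilon\bigr|^2\lesssim \varepsilon^{-2}\bigl(H_N+N^{-1}\bigr)$, not $\tfrac{C}{\varepsilon^2N}(1+H_N)$, so the Gr\"onwall factor is $e^{CT/\varepsilon^2}=e^{CT\sqrt{\lambda\log N}}$ — subpolynomial, so the conclusion $H_N\le C\varepsilon$ survives, but your stated constant is wrong; and \eqref{a1} gives $\|u^\varepsilon-u\|_{L^\infty(0,T;L^1)}\le C\sqrt{\varepsilon}$ (the square is $\le C\varepsilon$), not $\le C\varepsilon$, which after tensorization and squaring is still exactly what the final bound $\|u^\varepsilon_{N,k}-u^{\otimes k}\|^2_{L^\infty(0,T^*;L^1)}\le C(k)\varepsilon$ requires.
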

\begin{remark}
Such a function $u_0$ does indeed exist. We give the following example:
Let $\varphi \in C^\infty (\R^2)$ verify $0\leq  \varphi \leq 1$ on $\R^2$, $\varphi = 0$ on $B_1(0)$ and $\varphi =1$ on $B_2(0)^c$. Then the function
$$ u_0(x) = \exp \left(- (1+|x|)^\alpha \varphi(x) \right) $$
satisfies all assumptions of Theorem \ref{relative} for $\alpha \in (0,1-\frac{2}{p})$.
\end{remark}
\begin{remark}
In fact, we can achieve the strong $L^\infty(0,T;L^1(\mathbb{R}^{2k}))$ convergence for the propagation of chaos
as long as $\|\nabla\log u^\varepsilon\|_{L^\infty((0,T)\times\mathbb{R}^2)}=O(\frac{1}{\varepsilon^\vartheta})$ with $\vartheta>0$.
\end{remark}
The rest of the paper is organized as follows. The proof of Theorem \ref{theorempde} is presented in Section \ref{final}.
The mean-field limit and propagation of chaos results are given in Sections \ref{The_proof_of_Theorem_3}.

\section{Existence and error estimates}
\label{final}

This section is devoted to the analysis of partial differential equations \eqref{singalKellerSegel} and \eqref{kellersegelmedium},
and the error estimates between their solutions.
For any given $\varepsilon>0$, it is standard to obtain a global non-negative classical solution $(u^\varepsilon,{v^\varepsilon})$
to the non-local system \eqref{kellersegelmedium}. Therefore, for simplicity, we only generate the uniform in $\varepsilon$ estimates for $u^\varepsilon$ and proceed the compactness argument.

\subsection{Well-posedness of problem \eqref{singalKellerSegel}}

We start with the energy inequality satisfied by $(u^\varepsilon,{v^\varepsilon})$.
\begin{proposition}\label{entropy1}{\rm{(Energy inequality for $(u^\varepsilon,{v^\varepsilon})$)}}.
Let $(u^\varepsilon,{v^\varepsilon})$ be a smooth non-negative solution to the system \eqref{kellersegelmedium}. Then $(u^\varepsilon,{v^\varepsilon})$ satisfies
\begin{align}\label{1}
\frac{d}{dt}F(u^\varepsilon,{v^\varepsilon})(t)
+\int_{\mathbb{R}^2}u^\varepsilon e^{-{v^\varepsilon}}|\nabla(\log u^\varepsilon-{v^\varepsilon}+\log(1+|x|^2))|^2\,dx
+(4-c_*\chi)\int_{\R^2}|\nabla\sqrt{u^\varepsilon}|^2\,dx\leq 9,
\end{align}
where
\begin{align*}
F(u^\varepsilon,{v^\varepsilon})(t):
=\int_{\mathbb{R}^2}\Big(u^\varepsilon\log u^\varepsilon
+\frac{|\nabla {v^\varepsilon}|^2}{2}+\frac{{v^\varepsilon}^2}{2}
-\frac{u^\varepsilon {v^\varepsilon}}{2}-\chi\frac{u^\varepsilon {v^\varepsilon}\ast j^\varepsilon }{2}
-u^\varepsilon\log H\Big)\,dx.
\end{align*}
Here $H(x)= 1/\pi(1+|x|^2)^2$.
\end{proposition}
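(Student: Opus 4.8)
\emph{Setup.} I will differentiate $F(u^\varepsilon,v^\varepsilon)$ in time, exploit the divergence form of the $u^\varepsilon$-equation, complete a square to extract the dissipation, and absorb the aggregation contribution into the linear-diffusion dissipation via the stated Sobolev inequality. All integrals below are over $\R^2$. I treat $(u^\varepsilon,v^\varepsilon)$ as a smooth, strictly positive, sufficiently fast decaying solution so that every integration by parts produces no boundary term (rigorously one works with $u^\varepsilon+\delta$ and lets $\delta\downarrow 0$, using the $|x|^2u^\varepsilon$-moment control and elliptic regularity for $v^\varepsilon$). Two elementary facts will be used repeatedly: $v^\varepsilon\ge 0$, hence $e^{-v^\varepsilon}\le 1$, by the maximum principle for $-\Delta v^\varepsilon+v^\varepsilon=\chi u^\varepsilon\ast j^\varepsilon\ge 0$; and $\|u^\varepsilon(t)\|_{L^1}=1$ by mass conservation. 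Write the first equation of \eqref{kellersegelmedium} in flux form as $\partial_t u^\varepsilon=\nabla\cdot J^\varepsilon$ with $J^\varepsilon:=e^{-v^\varepsilon}u^\varepsilon\nabla(\log u^\varepsilon-v^\varepsilon)+\nabla u^\varepsilon$.

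\emph{Step 1: differentiation and cancellation of the $v^\varepsilon$-terms.} Differentiating the six contributions to $F$ and integrating by parts, the entropy and confinement terms give $-\int\nabla\log u^\varepsilon\cdot J^\varepsilon$ and $-2\int\nabla\log(1+|x|^2)\cdot J^\varepsilon$. For the remaining three pieces one uses the elliptic equation, its time derivative, the self-adjointness of $(-\Delta+1)^{-1}$, and the evenness of $j^\varepsilon$: one finds $\tfrac{d}{dt}\big[\tfrac12\int|\nabla v^\varepsilon|^2+\tfrac12\int(v^\varepsilon)^2\big]=\chi\int(u^\varepsilon\ast j^\varepsilon)\partial_t v^\varepsilon=\chi\int(v^\varepsilon\ast j^\varepsilon)\partial_t u^\varepsilon=-\chi\int J^\varepsilon\cdot\nabla(v^\varepsilon\ast j^\varepsilon)$, together with $\int u^\varepsilon\partial_t v^\varepsilon=\int v^\varepsilon\partial_t u^\varepsilon$ and $\int u^\varepsilon(\partial_t v^\varepsilon\ast j^\varepsilon)=\int(u^\varepsilon\ast j^\varepsilon)\partial_t v^\varepsilon$, which yield $\tfrac{d}{dt}\big[-\tfrac12\int u^\varepsilon v^\varepsilon\big]=\int J^\varepsilon\cdot\nabla v^\varepsilon$ and $\tfrac{d}{dt}\big[-\tfrac\chi2\int u^\varepsilon(v^\varepsilon\ast j^\varepsilon)\big]=\chi\int J^\varepsilon\cdot\nabla(v^\varepsilon\ast j^\varepsilon)$. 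Adding everything, the $v^\varepsilon$-energy and the mollified coupling cancel exactly, leaving the compact identity $\tfrac{d}{dt}F(u^\varepsilon,v^\varepsilon)=-\int J^\varepsilon\cdot\nabla\big(\log u^\varepsilon-v^\varepsilon+2\log(1+|x|^2)\big)$.

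\emph{Step 2: completing the square and closing the estimate.} Put $g:=\log(1+|x|^2)$, $a:=\log u^\varepsilon-v^\varepsilon$, split $J^\varepsilon=e^{-v^\varepsilon}u^\varepsilon\nabla a+\nabla u^\varepsilon$ and expand. The $e^{-v^\varepsilon}u^\varepsilon$-part, via $|\nabla a|^2+2\nabla a\cdot\nabla g=|\nabla(a+g)|^2-|\nabla g|^2$, becomes $-\int e^{-v^\varepsilon}u^\varepsilon|\nabla(a+g)|^2+\int e^{-v^\varepsilon}u^\varepsilon|\nabla g|^2$, and since $|\nabla g|^2=4|x|^2(1+|x|^2)^{-2}\le 1$ and $e^{-v^\varepsilon}\le 1$ the leftover is $\le\int u^\varepsilon=1$. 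The $\nabla u^\varepsilon$-part equals $-\int|\nabla u^\varepsilon|^2/u^\varepsilon+\int\nabla u^\varepsilon\cdot\nabla v^\varepsilon-2\int\nabla u^\varepsilon\cdot\nabla g$, where $-\int|\nabla u^\varepsilon|^2/u^\varepsilon=-4\int|\nabla\sqrt{u^\varepsilon}|^2$ and $-2\int\nabla u^\varepsilon\cdot\nabla g=2\int u^\varepsilon\Delta g=8\int u^\varepsilon(1+|x|^2)^{-2}\le 8$ using $\Delta g=4(1+|x|^2)^{-2}$. For the cross term, integrate by parts and insert the elliptic equation: $\int\nabla u^\varepsilon\cdot\nabla v^\varepsilon=-\int u^\varepsilon v^\varepsilon+\chi\int u^\varepsilon(u^\varepsilon\ast j^\varepsilon)\le\chi\int u^\varepsilon(u^\varepsilon\ast j^\varepsilon)$ since $u^\varepsilon,v^\varepsilon\ge 0$; then by Young's convolution inequality ($\|j^\varepsilon\|_{L^1}=1$) and the Sobolev inequality applied to $w=\sqrt{u^\varepsilon}$ together with $\|\sqrt{u^\varepsilon}\|_{L^2}^2=\|u^\varepsilon\|_{L^1}=1$, one gets $\int u^\varepsilon(u^\varepsilon\ast j^\varepsilon)\le\|u^\varepsilon\|_{L^2}^2=\|\sqrt{u^\varepsilon}\|_{L^4}^4\le c_*\int|\nabla\sqrt{u^\varepsilon}|^2$. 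Hence $-4\int|\nabla\sqrt{u^\varepsilon}|^2+\int\nabla u^\varepsilon\cdot\nabla v^\varepsilon\le-(4-c_*\chi)\int|\nabla\sqrt{u^\varepsilon}|^2$, and collecting all the pieces gives precisely \eqref{1} with right-hand side $1+8=9$.

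\emph{Main obstacle.} The delicate part is Step 1: one must verify that differentiating the three $v^\varepsilon$-dependent pieces of $F$ — the $H^1$-energy of $v^\varepsilon$ and the two quadratic couplings, only one of which carries the mollifier — produces an \emph{exact} cancellation rather than a residual term. This rests on the self-adjointness of $(-\Delta+1)^{-1}$, the evenness of $j^\varepsilon$, and careful bookkeeping of where the mollifier sits; indeed, the coefficients $-\tfrac12u^\varepsilon v^\varepsilon$ and $-\tfrac\chi2u^\varepsilon(v^\varepsilon\ast j^\varepsilon)$ appearing in $F$ are tuned exactly so that this works. A secondary technical issue, absent on bounded domains, is justifying all the integrations by parts on the whole plane; this is handled by the $\delta$-regularization and the spatial decay afforded by the $|x|^2u^\varepsilon$-moment bound.
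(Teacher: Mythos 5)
Your proof is correct and follows essentially the same route as the paper: the same symmetry identities for the coupling terms and the test with $\partial_t v^\varepsilon$, the same control of $\int\nabla u^\varepsilon\cdot\nabla v^\varepsilon$ via the elliptic equation, Young's convolution inequality and the Sobolev inequality with $\|u^\varepsilon\|_{L^1}=1$, and the same bounds $|\nabla\log(1+|x|^2)|\leq 1$, $\Delta\log(1+|x|^2)=4(1+|x|^2)^{-2}$ producing the constant $9$. The only difference is organizational: you differentiate $F$ directly and complete the square in one combined identity, whereas the paper first derives the unweighted entropy inequality \eqref{28} and then adds the moment computation \eqref{4} obtained by testing with $\log H$; the content is identical.
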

\begin{proof}
Notice that the first equation of \eqref{kellersegelmedium} can be written as
\begin{align}\label{27}
\partial_t u^\varepsilon=\nabla\cdot( u^\varepsilon e^{-{v^\varepsilon}}\nabla(\log u^\varepsilon-{v^\varepsilon}))+\Delta u^\varepsilon.
\end{align}
A direct entropy identity of \eqref{27} can be easily obtained by using $(\log u^\varepsilon-{v^\varepsilon})$ as a test function:
\begin{align}\label{a2}
\frac{d}{dt}&\int_{\R^2}u^\varepsilon\log u^\varepsilon\,dx-\int_{\R^2}\partial_t u^\varepsilon {v^\varepsilon}\,dx
+\int_{\R^2}u^\varepsilon e^{-{v^\varepsilon}}|\nabla(\log u^\varepsilon-{v^\varepsilon})|^2\,dx\nonumber\\
&+4\int_{\R^2}|\nabla\sqrt{u^\varepsilon}|^2\,dx=\int_{\R^2}\nabla u^\varepsilon\cdot\nabla {v^\varepsilon}\,dx,
\end{align}
where
\begin{align}\label{a3}
\int_{\R^2}\partial_t u^\varepsilon {v^\varepsilon}\,dx
=&\int_{\R^2\times\R^2}\partial_t u^\varepsilon(t,x)u^\varepsilon(t,y)\Phi\ast j^\varepsilon(x-y)\,dydx\nonumber\\
=&\frac{1}{2}\frac{d}{dt}\int_{\R^2\times\R^2}u^\varepsilon(t,x)u^\varepsilon(t,y)\Phi\ast j^\varepsilon(x-y)\,dydx\nonumber\\
=&\frac{1}{2}\frac{d}{dt}\int_{\R^2}u^\varepsilon(t,x){v^\varepsilon}(t,x)\,dx.
\end{align}
From the second equation of \eqref{kellersegelmedium}, we obtain by taking $\partial_t {v^\varepsilon}$ as a test function that
\begin{align}\label{a4}
\frac{1}{2}\frac{d}{dt}\int_{\R^2}(|\nabla {v^\varepsilon}|^2+{v^\varepsilon}^2)\,dx
-\chi\int_{\R^2}\partial_t {v^\varepsilon} u^\varepsilon\ast j^\varepsilon\,dx=0.
\end{align}
We proceed similarly as \eqref{a3} to obtain that
\begin{align}\label{a5}
-\chi\int_{\R^2}\partial_t {v^\varepsilon} u^\varepsilon\ast j^\varepsilon\,dx
&=-\chi\int_{\R^2\times\R^2}\partial_t u^\varepsilon\ast j^\varepsilon(t,y)u^\varepsilon\ast j^\varepsilon(t,x)\Phi(x-y)\,dydx\nonumber\\
&=-\frac{\chi}{2}\frac{d}{dt}\int_{\R^2}u^\varepsilon(t,x){v^\varepsilon}\ast j^\varepsilon(t,x)\,dx.
\end{align}
Combining \eqref{a2}-\eqref{a5}, we infer that
\begin{align}\label{2}
\frac{d}{dt}&\int_{\mathbb{R}^2}\Big(u^\varepsilon\log u^\varepsilon+\frac{|\nabla {v^\varepsilon}|^2}{2}
+\frac{{v^\varepsilon}^2}{2}-\frac{u^\varepsilon {v^\varepsilon}}{2}-\chi\frac{u^\varepsilon {v^\varepsilon}\ast j^\varepsilon}{2} \Big)\,dx\nonumber\\
&+\int_{\mathbb{R}^2}u^\varepsilon e^{-{v^\varepsilon}}|\nabla(\log u^\varepsilon-{v^\varepsilon})|^2\,dx
+4\int_{\mathbb{R}^2}|\nabla\sqrt{u^\varepsilon}|^2\,dx
=\int_{\mathbb{R}^2}\nabla u^\varepsilon\cdot\nabla {v^\varepsilon}\,dx.
\end{align}
The second equation of \eqref{kellersegelmedium}, the Gagliardo-Nirenberg inequality and the non-negativity of $(u^\varepsilon,{v^\varepsilon})$ imply
\begin{align}\label{3}
\int_{\mathbb{R}^2}&\nabla u^\varepsilon\cdot \nabla {v^\varepsilon}\,dx
=-\int_{\mathbb{R}^2}u^\varepsilon {v^\varepsilon}\,dx+\chi\int_{\mathbb{R}^2}u^\varepsilon u^\varepsilon\ast j^\varepsilon\,dx\nonumber\\
&\leq \frac{\chi}{2}\|u^\varepsilon\|_{L^2(\mathbb{R}^2)}^2
+\frac{\chi}{2}\|u^\varepsilon\ast j^\varepsilon\|_{L^2(\mathbb{R}^2)}^2
\leq \chi\|u^\varepsilon\|_{L^2(\mathbb{R}^2)}^2
= \chi\|\sqrt{u^\varepsilon}\|_{L^4(\mathbb{R}^2)}^4\nonumber\\
&\leq c_*\chi\|\sqrt{u^\varepsilon}\|_{L^2(\R^2)}^2\|\nabla \sqrt{u^\varepsilon}\|_{L^2(\mathbb{R}^2)}^2
\leq c_*\chi\|\nabla \sqrt{u^\varepsilon}\|_{L^2(\mathbb{R}^2)}^2.
\end{align}
Plugging \eqref{3} into \eqref{2}, we arrive at
\begin{align}\label{28}
\frac{d}{dt}\mathcal{F}^\varepsilon(u^\varepsilon,{v^\varepsilon})(t)+\int_{\R^2}u^\varepsilon e^{-{v^\varepsilon}}|\nabla (\log u^\varepsilon-{v^\varepsilon})|^2\,dx
+(4-c_*\chi)\int_{\R^2}|\nabla\sqrt{u^\varepsilon}|^2\,dx
\leq 0,
\end{align}
where $\mathcal{F}^\varepsilon(u^\varepsilon,{v^\varepsilon})(t):
=\int_{\mathbb{R}^2}\big(u^\varepsilon\log u^\varepsilon
+\frac{|\nabla {v^\varepsilon}|^2}{2}+\frac{{v^\varepsilon}^2}{2}
-\frac{u^\varepsilon {v^\varepsilon}}{2}-\chi\frac{u^\varepsilon {v^\varepsilon}\ast j^\varepsilon }{2}\big)\,dx$.

We multiply \eqref{27} by $\log H(x)$ $(H(x)= 1/\pi(1+|x|^2)^2)$ to obtain that
\begin{align}\label{4}
\frac{d}{dt}\int_{\mathbb{R}^2}u^\varepsilon\log H\,dx
=&-\int_{\mathbb{R}^2} u^\varepsilon e^{-{v^\varepsilon}}\nabla(\log u^\varepsilon-{v^\varepsilon})\cdot\nabla\log H\,dx
+\int_{\mathbb{R}^2}u^\varepsilon\Delta \log H\,dx\nonumber\\
=&2\int_{\mathbb{R}^2}u^\varepsilon e^{-{v^\varepsilon}}\nabla (\log u^\varepsilon-{v^\varepsilon})\cdot\nabla\log (1+|x|^2)\,dx
-8\int_{\mathbb{R}^2}\frac{u^\varepsilon}{(1+|x|^2)^2}\,dx.
\end{align}
Combining \eqref{28} and \eqref{4}, it holds
\begin{align*}
&\frac{d}{dt}F(u^\varepsilon,{v^\varepsilon})(t)
+\int_{\mathbb{R}^2}u^\varepsilon e^{-{v^\varepsilon}}|\nabla(\log u^\varepsilon-{v^\varepsilon}+\log(1+|x|^2))|^2\,dx
+(4-c_*\chi)\int_{\R^2}|\nabla\sqrt{u^\varepsilon}|^2\,dx\\
\leq & 8\int_{\mathbb{R}^2}\frac{u^\varepsilon}{(1+|x|^2)^2}\,dx
+\int_{\mathbb{R}^2}u^\varepsilon e^{-{v^\varepsilon}}|\nabla\log (1+|x|^2)|^2\,dx\leq 9,
\end{align*}
where we have used
$|\nabla \log(1+|x|^2)|=\big|\frac{2x}{1+|x|^2}\big|\leq 1$
and $\int_{\mathbb{R}^2}u^\varepsilon\,dx=1$.
\end{proof}

The entropy appeared in \eqref{1} shares a similar structure to the entropy for the classical two-dimensional parabolic-parabolic Keller-Segel system in \cite{CCC}. This motivates us to apply the three lemmas in the appendix, which has played the key roles in deriving {\it a priori} estimates from entropy. Namely, the energy inequality \eqref{1} and Lemmas \ref{entropymini}-\ref{f-} allow us to derive the following estimates:
\begin{proposition}{\rm{(Estimates for $(u^\varepsilon,{v^\varepsilon})).$}}\label{energy1}
Let $(u^\varepsilon,{v^\varepsilon})$ be a smooth non-negative solution to the problem \eqref{kellersegelmedium}. Then for any $t\in (0,\infty)$,
\begin{gather}
\int_{\mathbb{R}^2}(u^\varepsilon {v^\varepsilon}+u^\varepsilon {v^\varepsilon}\ast j^\varepsilon)(t,x)\,dx\leq C(1+t),\label{5}\\
\|{v^\varepsilon}(t)\|_{H^1(\mathbb{R}^2)}\leq C(1+t),\label{6}\\
F(u^\varepsilon,{v^\varepsilon})(t)\geq -C,\label{7}
\end{gather}
where $C$ appeared in this subsection is a positive constant independent of $\varepsilon$.
\end{proposition}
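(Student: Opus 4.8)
The plan is to time-integrate the energy inequality \eqref{1}, to exploit the special algebraic structure of $F$, and to control the singular long-range interaction $\int_{\R^2}u^\varepsilon v^\varepsilon\,dx$ by the logarithmic Hardy--Littlewood--Sobolev inequality together with the entropy estimates of the appendix (Lemmas \ref{entropymini}--\ref{f-}), along the lines of the entropy-minimization/Onofri argument of \cite{CCC}. First I would integrate \eqref{1}: since $\chi<4/c_*$ both dissipative terms are nonnegative, so $\frac{d}{dt}F(u^\varepsilon,v^\varepsilon)(t)\le 9$ and $F(u^\varepsilon,v^\varepsilon)(t)\le F(u^\varepsilon,v^\varepsilon)(0)+9t$. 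The datum $F(u^\varepsilon,v^\varepsilon)(0)$ is bounded uniformly in $\varepsilon$: $\int u_0\log u_0$ and $2\int u_0\log(1+|x|^2)\le 2\int|x|^2u_0$ are finite by hypothesis, testing $-\Delta v^\varepsilon(0)+v^\varepsilon(0)=\chi u_0\ast j^\varepsilon$ by $v^\varepsilon(0)$ and using $L^{4/3}(\R^2)\hookrightarrow H^{-1}(\R^2)$ together with Young's inequality gives $\|v^\varepsilon(0)\|_{H^1}\le C\|u_0\|_{L^{4/3}}$ uniformly in $\varepsilon$, and the two coupling terms are $\le 0$ by nonnegativity of $u_0,v^\varepsilon(0)$; hence $F(u^\varepsilon,v^\varepsilon)(t)\le C(1+t)$. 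Testing the first equation of \eqref{kellersegelmedium} by $|x|^2$ (using $\Delta|x|^2=4$, $0\le e^{-v^\varepsilon}\le1$) gives $\frac{d}{dt}\int|x|^2u^\varepsilon\,dx\le8$, so $\int|x|^2u^\varepsilon(t)\,dx\le C(1+t)$ and $\int u^\varepsilon(t)\log(1+|x|^2)\,dx\le C(1+t)$; and testing the second equation by $v^\varepsilon$ gives the identity $\|v^\varepsilon\|_{H^1}^2=\chi\int u^\varepsilon(v^\varepsilon\ast j^\varepsilon)\,dx$, so the quadratic terms in $F$ cancel and
\[
F(u^\varepsilon,v^\varepsilon)(t)=\int_{\R^2}u^\varepsilon\log u^\varepsilon\,dx-\frac12\int_{\R^2}u^\varepsilon v^\varepsilon\,dx+2\int_{\R^2}u^\varepsilon\log(1+|x|^2)\,dx .
\]

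Next I would estimate the interaction terms. Using $v^\varepsilon=\Phi^\varepsilon\ast u^\varepsilon$, the pointwise bound $\tilde\Phi(z)\le-\frac1{2\pi}\log|z|+\frac1{4\pi}\log(1+|z|^2)+C$ for the Yukawa kernel (logarithmic singularity at the origin, exponential decay at infinity), which is preserved with $\varepsilon$-independent constants under convolution with $j^\varepsilon$ and with $j^\varepsilon\ast j^\varepsilon$, the elementary inequality $\log(1+|x-y|^2)\le\log2+\log(1+|x|^2)+\log(1+|y|^2)$ with $\|u^\varepsilon\|_{L^1}=1$, and finally the logarithmic HLS inequality at unit mass, $-\iint u^\varepsilon(x)u^\varepsilon(y)\log|x-y|\,dx\,dy\le\frac12\int u^\varepsilon\log u^\varepsilon\,dx+C$ (Lemma \ref{entropymini}), I arrive, for $w^\varepsilon\in\{v^\varepsilon,\,v^\varepsilon\ast j^\varepsilon\}$, at
\[
\int_{\R^2}u^\varepsilon w^\varepsilon\,dx\le\frac{\chi}{4\pi}\int_{\R^2}u^\varepsilon\log u^\varepsilon\,dx+\frac{\chi}{2\pi}\int_{\R^2}u^\varepsilon\log(1+|x|^2)\,dx+C .
\]

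To close \eqref{5} I would insert this (with $w^\varepsilon=v^\varepsilon$) into $\int u^\varepsilon\log u^\varepsilon\le\frac12\int u^\varepsilon v^\varepsilon+C(1+t)$ (read off from $F(t)\le C(1+t)$ after discarding the nonnegative weighted-entropy term) and use the second-moment bound, reaching $\int u^\varepsilon v^\varepsilon\le\frac{\chi}{8\pi}\int u^\varepsilon v^\varepsilon+C(1+t)$; since $\chi<4/c_*\le 8\pi$ (indeed $c_*\ge\frac1{2\pi}$, by a Gaussian test function) we get $\int u^\varepsilon v^\varepsilon\le C(1+t)$, hence $\int u^\varepsilon\log u^\varepsilon\le C(1+t)$, and then the interaction bound with $w^\varepsilon=v^\varepsilon\ast j^\varepsilon$ gives $\int u^\varepsilon(v^\varepsilon\ast j^\varepsilon)\le C(1+t)$; this is \eqref{5}, and \eqref{6} follows from $\|v^\varepsilon\|_{H^1}^2=\chi\int u^\varepsilon(v^\varepsilon\ast j^\varepsilon)\le C(1+t)$. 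For \eqref{7}, substituting the interaction bound into the reduced formula for $F$ and noting that the coefficients match, $2-\frac12\cdot\frac{\chi}{2\pi}=2(1-\frac{\chi}{8\pi})$, yields
\[
F(u^\varepsilon,v^\varepsilon)(t)\ge\Big(1-\frac{\chi}{8\pi}\Big)\Big(\int_{\R^2}u^\varepsilon\log u^\varepsilon\,dx+2\int_{\R^2}u^\varepsilon\log(1+|x|^2)\,dx\Big)-C ;
\]
the bracket equals $\int u^\varepsilon\log\big(\pi\,u^\varepsilon(1+|x|^2)^2\big)\,dx-\log\pi$, i.e. the relative entropy of $u^\varepsilon$ with respect to the probability density $H(x)=\pi^{-1}(1+|x|^2)^{-2}$ minus $\log\pi$, so it is $\ge-\log\pi$ (Lemma \ref{f-}); as $1-\frac{\chi}{8\pi}>0$ this gives $F(u^\varepsilon,v^\varepsilon)(t)\ge-C$, uniformly in $t$ and $\varepsilon$.

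The main obstacle will be the uniform-in-$\varepsilon$ handling of the singular long-range term $\int u^\varepsilon v^\varepsilon\,dx$: one must strip off the logarithmic singularity of the mollified Yukawa kernel to apply logarithmic HLS, while simultaneously absorbing its unbounded spatial growth into precisely the weight $2\int u^\varepsilon\log(1+|x|^2)\,dx$ built into $F$, with all constants lining up so that the surviving prefactor $1-\frac{\chi}{8\pi}>0$ multiplies a genuine relative entropy. This matching is where the smallness condition $\chi<4/c_*$ enters, and it is what makes the linear-in-time bounds \eqref{5}--\eqref{6} and the uniform lower bound \eqref{7} close simultaneously; checking that the kernel estimates and the entropy lemmas survive the mollification with $\varepsilon$-independent constants is the remaining, routine but careful, part.
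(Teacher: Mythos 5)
Your proposal is correct in substance but takes a genuinely different route from the paper. The paper keeps the full functional $F$ (quadratic terms included), applies the entropy-minimization Lemma \ref{entropymini} with $\phi=(\tfrac12+\delta)v^\varepsilon+\chi(\tfrac12+\delta)v^\varepsilon\ast j^\varepsilon+\log H$ together with Onofri's inequality (Lemma \ref{Onofri}) applied to $v^\varepsilon$ itself, and after choosing $\delta,\delta'$ small obtains a lower bound for $F$ that already contains $\delta\int u^\varepsilon(v^\varepsilon+\chi v^\varepsilon\ast j^\varepsilon)\,dx$ and positive multiples of $\|v^\varepsilon\|_{H^1}^2$, so that \eqref{5}--\eqref{7} all follow at once from $F(t)\le F(0)+9t$; the price is the smallness requirement $(1+\chi^2)(1+2\delta)^2<8\pi$. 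You instead first cancel the quadratic part of $F$ against the $\chi$-coupling term through the elliptic identity $\|v^\varepsilon\|_{H^1}^2=\chi\int u^\varepsilon (v^\varepsilon\ast j^\varepsilon)\,dx$, and then control the interaction on the primal side: pointwise comparison of the mollified Yukawa kernel with $-\tfrac1{2\pi}\log|x-y|+\tfrac1{4\pi}\log(1+|x-y|^2)+C$, the logarithmic HLS inequality at unit mass, and an absorption argument using $F(t)\le C(1+t)$ plus the second moment. Your closure needs only $\chi<8\pi$, which indeed follows from $\chi<4/c_*$ and the Gaussian computation $c_*\ge 1/(2\pi)$ (the paper's route, by contrast, needs $\chi^2<8\pi-1$), and your coefficient matching $2-\tfrac{\chi}{4\pi}=2\bigl(1-\tfrac{\chi}{8\pi}\bigr)$ identifies the remainder as a relative entropy with respect to $H$, giving the time-uniform lower bound \eqref{7}. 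The two arguments are dual to each other: the paper avoids quoting log-HLS (Onofri applies directly because $v^\varepsilon\in H^1$ for fixed $\varepsilon$), while your version isolates the entropy structure more transparently and with a milder constraint on $\chi$.

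Two small repairs. First, the inequality $-\iint u^\varepsilon(x)u^\varepsilon(y)\log|x-y|\,dxdy\le\tfrac12\int u^\varepsilon\log u^\varepsilon\,dx+C$ is the classical Carlen--Loss/Beckner logarithmic HLS inequality, not Lemma \ref{entropymini}; and the nonnegativity of $\int u^\varepsilon\log u^\varepsilon\,dx-\int u^\varepsilon\log H\,dx$ at the end is Lemma \ref{entropymini} with $\phi=\log H$, not Lemma \ref{f-}. Second, the $\varepsilon$-uniform preservation of the kernel bound under convolution with $j^\varepsilon$ (and $j^\varepsilon\ast j^\varepsilon$) does require justification: it follows from the super-mean-value property of the superharmonic function $-\log|\cdot|$ for the radially symmetric mollifier, together with $\log(1+|z-w|^2)\le\log(1+|z|^2)+C$ for $|w|\le 2\varepsilon\le 2$; without radial symmetry the constant in front of the logarithm could inflate and spoil the sharp factor $\tfrac{\chi}{8\pi}$. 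With these points made explicit, your proof closes as described.
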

\begin{proof}
Since $(u^\varepsilon,v^\varepsilon)$ is the non-negative classical solution to the non-local problem \eqref{kellersegelmedium},
for any fixed $\varepsilon>0$, it holds $v^\varepsilon\in L^\infty(0,T;H^1(\R^2))$ and $v^\varepsilon\in L^\infty(0,T;L^1(\R^2,H(x)dx))$,
which follows from the fact that $H\le 1$ on $\R^2$ and $v^\varepsilon \in L^\infty(0,T;L^1(\R^2))$.
Together with the $L^1(\R^2)$ bound for $j^\varepsilon$ and Young's convolution inequality,
we infer that $v^\varepsilon\ast j^\varepsilon\in L^\infty(0,T;H^1(\R^2))$ and $v^\varepsilon\ast j^\varepsilon\in L^\infty(0,T;L^1(\R^2,H(x)dx))$
for any fixed $\varepsilon>0$.
These allow us to use Lemma \ref{Onofri} to infer that
\begin{align*}
&\int_{\R^2}\exp\Big\{\Big(\frac{1}{2}+\delta\Big){v^\varepsilon}+\chi\Big(\frac{1}{2}+\delta\Big){v^\varepsilon}\ast j^\varepsilon\Big\}H(x)\,dx\\
\leq &\exp\Big\{\int_{\R^2}\Big(\Big(\frac{1}{2}+\delta\Big){v^\varepsilon}+\chi\Big(\frac{1}{2}+\delta\Big){v^\varepsilon}\ast j^\varepsilon \Big)H(x)\,dx
+\frac{1}{16\pi}\int_{\R^2}\Big|\Big(\frac{1}{2}+\delta\Big)\nabla {v^\varepsilon}
+\chi\Big(\frac{1}{2}+\delta\Big)\nabla {v^\varepsilon}\ast j^\varepsilon\Big|^2\,dx\Big\},
\end{align*}
where $\delta>0$ will be determined later.
Clearly, the integral on the left-hand side of the above inequality is nothing but the $L^1(\mathbb{R}^2)$ norm of
$\exp\{(\frac{1}{2}+\delta){v^\varepsilon}+\chi(\frac{1}{2}+\delta){v^\varepsilon}\ast j^\varepsilon+\log H\}$,
which implies that we can take
$\hat{w}=e^{\phi}/\int_{\mathbb{R}^2}e^\phi dx$ with
$\phi=(\frac{1}{2}+\delta){v^\varepsilon}+\chi(\frac{1}{2}+\delta){v^\varepsilon}\ast j^\varepsilon+\log H$ in Lemma \ref{entropymini}
to get
\begin{align*}
\mathcal{E}\Big(u^\varepsilon;\Big(\frac{1}{2}+\delta\Big)&{v^\varepsilon}+\chi\Big(\frac{1}{2}+\delta\Big){v^\varepsilon}\ast j^\varepsilon+\log H\Big)
=\mathcal{E}(u^\varepsilon;\phi)\geq \mathcal{E}(\hat{w};\phi)\\
=& \int_{\R^2}\frac{e^\phi}{\int_{\R^2}e^\phi\,dx}\Big(\log\Big(\frac{e^\phi}{\int_{\R^2}e^\phi\,dx}\Big)-\phi  \Big)\,dx \\
=&-\log \int_{\mathbb{R}^2}\exp\Big\{\Big(\frac{1}{2}+\delta\Big){v^\varepsilon}+\chi\Big(\frac{1}{2}+\delta\Big){v^\varepsilon}\ast j^\varepsilon\Big\}H(x)\,dx\\
\geq& -\Big(\frac{1}{2}+\delta\Big)\int_{\mathbb{R}^2}({v^\varepsilon}+\chi{v^\varepsilon}\ast j^\varepsilon)H\,dx
-\frac{(1+\chi^2)(1+2\delta)^2}{16\pi}\int_{\mathbb{R}^2}|\nabla {v^\varepsilon}|^2\,dx,
\end{align*}
where we have used Lemma \ref{Onofri}.
Therefore,
\begin{align*}
 F(u^\varepsilon,{v^\varepsilon})(t)
=&\mathcal{E}\Big(u^\varepsilon;\Big(\frac{1}{2}+\delta\Big){v^\varepsilon}
+\chi\Big(\frac{1}{2}+\delta\Big){v^\varepsilon}\ast j^\varepsilon+\log H \Big)+\delta\int_{\mathbb{R}^2}u^\varepsilon {v^\varepsilon}\,dx\\
&+\delta\chi\int_{\mathbb{R}^2}u^\varepsilon {v^\varepsilon}\ast j^\varepsilon\,dx
+\frac{1}{2}\int_{\mathbb{R}^2}(|\nabla {v^\varepsilon}|^2+{v^\varepsilon}^2)\,dx\\
\geq &-\Big(\frac{1}{2}+\delta \Big)\int_{\mathbb{R}^2}({v^\varepsilon}+\chi{v^\varepsilon}\ast j^\varepsilon)H\,dx
+\delta\int_{\mathbb{R}^2}u^\varepsilon({v^\varepsilon}+\chi{v^\varepsilon}\ast j^\varepsilon)\,dx\\
&+\frac{1}{2}\Big(1-\frac{(1+\chi^2)(1+2\delta)^2}{8\pi} \Big)\int_{\mathbb{R}^2}|\nabla {v^\varepsilon}|^2\,dx
+\frac{1}{2}\int_{\mathbb{R}^2}{v^\varepsilon}^2\,dx.
\end{align*}
In view of H\"older's inequality, it holds
\begin{align*}
F(u^\varepsilon,{v^\varepsilon})(t)\geq &-\frac{1+2\delta}{2\delta'}\int_{\mathbb{R}^2}H^2\,dx
+\frac{1}{2}\Big(1-\frac{(1+\chi^2)(1+2\delta)\delta'}{2} \Big)\int_{\mathbb{R}^2}{v^\varepsilon}^2\,dx\\
&+\delta\int_{\mathbb{R}^2}u^\varepsilon({v^\varepsilon}+\chi{v^\varepsilon}\ast j^\varepsilon)\,dx
+\frac{1}{2}\Big(1-\frac{(1+\chi^2)(1+2\delta)^2}{8\pi} \Big)\int_{\mathbb{R}^2}|\nabla {v^\varepsilon}|^2\,dx.
\end{align*}
Since $\chi<4/c_*<\sqrt{8\pi-1}$, we can choose $\delta>0$ small enough such that $\chi^2<\frac{8\pi}{(1+2\delta)^2}-1 $,
and $\delta'>0$ small enough such that $(1+\chi^2)(1+2\delta)\delta'<2$.
Together with the energy inequality \eqref{1} show \eqref{5}-\eqref{7}.
\end{proof}
\begin{proposition}{\rm{(Estimates for $u^\varepsilon$).}}\label{energy}
Let $(u^\varepsilon,{v^\varepsilon})$ be a smooth non-negative solution to the problem \eqref{kellersegelmedium}. Then for any $t\in (0,\infty)$,
\begin{gather}
\int_{\mathbb{R}^2}|x|^2u^\varepsilon (t,x)\,dx+\int_{\mathbb{R}^2}u^\varepsilon|\log u^\varepsilon|(t,x)\,dx\leq C(1+t).\label{8}
\end{gather}
\end{proposition}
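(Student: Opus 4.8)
The plan is to prove the two bounds in \eqref{8} separately: a direct weighted energy estimate for the second moment, and the energy inequality \eqref{1} together with Proposition~\ref{energy1} for the entropy. For the second moment I would test the first equation of \eqref{kellersegelmedium}, written as $\partial_t u^\varepsilon=\Delta(e^{-v^\varepsilon}u^\varepsilon+u^\varepsilon)$, against the weight $|x|^2$. Integrating by parts twice and using $\Delta|x|^2=4$ gives $\frac{d}{dt}\int_{\R^2}u^\varepsilon|x|^2\,dx=4\int_{\R^2}(e^{-v^\varepsilon}+1)u^\varepsilon\,dx$. Since $v^\varepsilon\ge 0$ implies $e^{-v^\varepsilon}\le 1$ and $\int_{\R^2}u^\varepsilon\,dx=1$, the right-hand side is at most $8$, so integrating in time and using $u_0\in L^1(\R^2,|x|^2\,dx)$ yields $\int_{\R^2}u^\varepsilon|x|^2\,dx\le C(1+t)$. (For the smooth solution the decay at infinity makes the boundary terms vanish; otherwise one truncates the weight and passes to the limit.)

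\textbf{Entropy bound from above.} Integrating \eqref{1} in time gives $F(u^\varepsilon,v^\varepsilon)(t)\le F(u^\varepsilon,v^\varepsilon)(0)+9t$, where $F(u^\varepsilon,v^\varepsilon)(0)$ is finite uniformly in $\varepsilon$ by the hypotheses on $u_0$ and elliptic regularity for $-\Delta v_0^\varepsilon+v_0^\varepsilon=\chi u_0\ast j^\varepsilon$. In the definition of $F$ the terms $\tfrac12|\nabla v^\varepsilon|^2$, $\tfrac12(v^\varepsilon)^2$ and $2u^\varepsilon\log(1+|x|^2)$ are non-negative and may be discarded, while the coupling terms $-\tfrac12 u^\varepsilon v^\varepsilon$ and $-\tfrac{\chi}{2}u^\varepsilon v^\varepsilon\ast j^\varepsilon$ are controlled by \eqref{5}. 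This gives $\int_{\R^2}u^\varepsilon\log u^\varepsilon\,dx\le C(1+t)$.

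\textbf{From the one-sided bound to $u^\varepsilon|\log u^\varepsilon|$.} Write $\int_{\R^2}u^\varepsilon|\log u^\varepsilon|\,dx=\int_{\R^2}u^\varepsilon\log u^\varepsilon\,dx+2\int_{\{u^\varepsilon<1\}}u^\varepsilon\log(1/u^\varepsilon)\,dx$; it remains to bound the negative tail. On $\{e^{-|x|^2}\le u^\varepsilon<1\}$ we have $\log(1/u^\varepsilon)\le|x|^2$, so the integrand is $\le u^\varepsilon|x|^2$ and its integral is controlled by the second moment. On $\{u^\varepsilon<e^{-|x|^2}\}$, since $s\mapsto s\log(1/s)$ is increasing on $(0,1/e)$, one has $u^\varepsilon\log(1/u^\varepsilon)\le|x|^2e^{-|x|^2}$ for $|x|\ge 1$ and $u^\varepsilon\log(1/u^\varepsilon)\le 1/e$ for $|x|\le 1$, both integrable over $\R^2$. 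Hence the negative tail is $\le\int_{\R^2}u^\varepsilon|x|^2\,dx+C\le C(1+t)$, and \eqref{8} follows.

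The only genuinely delicate point is this last step: \eqref{1} controls $\int u^\varepsilon\log u^\varepsilon$ only from above, and since $u\log u$ changes sign the negative tail has to be absorbed separately; comparison with a Gaussian is exactly what makes the \emph{full} second-moment bound — rather than the weaker $\int u^\varepsilon\log(1+|x|^2)$ estimate already visible in $F$ — the right tool. The remaining steps are routine integration by parts and bookkeeping with Propositions~\ref{entropy1} and \ref{energy1}.
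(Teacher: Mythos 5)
Your proposal is correct and follows essentially the same route as the paper: the same $|x|^2$ multiplier for the second moment, the same use of the time-integrated energy inequality \eqref{1} together with \eqref{5} (and the sign of the discarded terms) for the upper entropy bound, and the same Gaussian comparison to absorb the negative tail. The only cosmetic difference is that you re-derive by hand the estimate for $\int u^\varepsilon(\log u^\varepsilon)_-\,dx$, whereas the paper simply invokes Lemma \ref{f-} with $\phi(x)=-|x|^2$; the underlying argument is identical.
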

\begin{proof}
We multiply \eqref{kellersegelmedium} by $|x|^2$ to get
\begin{align*}
\int_{\mathbb{R}^2}|x|^2u^\varepsilon (t,x)\,dx
=4\int_0^t\int_{\mathbb{R}^2}(e^{-v^\varepsilon}u^\varepsilon+u^\varepsilon)\,dxds
+\int_{\mathbb{R}^2}|x|^2u_0 \,dx\leq C(1+t).
\end{align*}
Therefore we obtain the first claim of \eqref{8}.

We notice that
\begin{align*}
\int_{\mathbb{R}^2}\Big(u^\varepsilon\log u^\varepsilon-\frac{u^\varepsilon {v^\varepsilon}}{2}-\chi\frac{u^\varepsilon {v^\varepsilon}\ast j^\varepsilon}{2}
-u^\varepsilon\log H\Big)\,dx
\leq F(u^\varepsilon,{v^\varepsilon})(t)\leq C(1+t).
\end{align*}
It follows that
\begin{align*}
\int_{\mathbb{R}^2}u^\varepsilon\log u^\varepsilon\,dx
\leq C(1+t)
+\int_{\mathbb{R}^2}\Big(\frac{u^\varepsilon {v^\varepsilon}}{2}+\chi\frac{u^\varepsilon {v^\varepsilon}\ast j^\varepsilon}{2}
+u^\varepsilon\log H\Big)\,dx\leq C(1+t).
\end{align*}
From
$\int_{\R^2}e^{-|x|^2}\,dx<\infty$,
we can take $\phi(x)=-|x|^2$ in Lemma \ref{f-} to get
\begin{align*}
\int_{\mathbb{R}^2}u^\varepsilon(\log u^\varepsilon)_-\,dx
\leq C+\int_{\mathbb{R}^2}|x|^2u^\varepsilon \,dx\leq C(1+t).
\end{align*}
Therefore,
\begin{align*}
\int_{\mathbb{R}^2}u^\varepsilon(\log u^\varepsilon)_+\,dx
\leq \int_{\mathbb{R}^2}u^\varepsilon\log u^\varepsilon\,dx+\int_{\mathbb{R}^2}u^\varepsilon(\log u^\varepsilon)_-\,dx
\leq C(1+t).
\end{align*}
Hence we complete the proof of the second claim of \eqref{8}.
\end{proof}

Motivated by the idea given in \cite{TW2}, by using the generalized Gagliardo-Nirenberg inequality Lemma \ref{GN}, detailed proof in the whole space case is given in the Appendix, we derive the $L^p$ $(1\leq p<\infty)$ estimates.
\begin{proposition}{\rm{($L^p$ estimates for $(u^\varepsilon,{v^\varepsilon})$).}}\label{pLp}
Let $({u^\varepsilon},{v^\varepsilon})$ be a smooth non-negative solution to the problem \eqref{kellersegelmedium}. Then for any $p\in [2,\infty)$, it holds
\begin{align}\label{Lp}
\frac{d}{dt}&\int_{\mathbb{R}^2}{u^\varepsilon}^p\,dx
+\frac{2(p-1)}{p}\int_{\mathbb{R}^2}e^{-{v^\varepsilon}}|\nabla {u^\varepsilon}^{p/2}|^2\,dx
+\frac{2(p-1)}{p}\int_{\mathbb{R}^2}|\nabla|\nabla {v^\varepsilon}|^p|^2\,dx\nonumber\\
&+2p\int_{\mathbb{R}^2}|\nabla {v^\varepsilon}|^{2p}\,dx
+\frac{2(p-1)}{p}\int_{\mathbb{R}^2}|\nabla {u^\varepsilon}^{p/2}|^2\,dx\leq C.
\end{align}
\end{proposition}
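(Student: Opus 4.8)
The plan is to follow the strategy of \cite{TW2}: test the parabolic equation in \eqref{kellersegelmedium} with $p(u^\varepsilon)^{p-1}$, derive a companion inequality for $\nabla v^\varepsilon$ from a $|\nabla v^\varepsilon|^{2p-2}$-weighted Bochner identity, and then close the resulting coupled differential inequality with the generalized Gagliardo--Nirenberg inequality of Lemma~\ref{GN} together with the entropy bounds already obtained in Propositions~\ref{entropy1}--\ref{energy}. I will drop the superscript $\varepsilon$ and write $f=\chi\,u\ast j^\varepsilon\ge 0$; recall $v\ge 0$, so $e^{-v}\le 1$, and $\|u\ast j^\varepsilon\|_{L^q}\le\|u\|_{L^q}$ for every $q$, which is what keeps all constants independent of $\varepsilon$. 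Writing the $u$-equation as $\partial_t u=\nabla\cdot\big(ue^{-v}\nabla(\log u-v)\big)+\Delta u$ and testing with $pu^{p-1}$, integration by parts gives
\[
\frac{d}{dt}\int_{\R^2}u^p\,dx+\frac{4(p-1)}{p}\int_{\R^2}(1+e^{-v})|\nabla u^{p/2}|^2\,dx
= p(p-1)\int_{\R^2}e^{-v}u^{p-1}\nabla u\cdot\nabla v\,dx .
\]
Since $u^{p-1}\nabla u=\tfrac{2}{p}u^{p/2}\nabla u^{p/2}$, Young's inequality absorbs half of the $e^{-v}$-dissipation and leaves the single bad term $\tfrac{p(p-1)}{2}\int e^{-v}u^p|\nabla v|^2\le\tfrac{p(p-1)}{2}\int u^p|\nabla v|^2$; I keep $\tfrac{2(p-1)}{p}\int e^{-v}|\nabla u^{p/2}|^2$ and reserve part of the linear-diffusion term $\int|\nabla u^{p/2}|^2$ for a later absorption.

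For the $v$-equation I will use that $\Delta v=v-f$ implies $\tfrac12\Delta|\nabla v|^2=|D^2v|^2+|\nabla v|^2-\nabla v\cdot\nabla f$. Multiplying by $|\nabla v|^{2p-2}$ and integrating, the left side integrates by parts to $-\tfrac{2(p-1)}{p^2}\int|\nabla|\nabla v|^p|^2$ (using $\nabla|\nabla v|^p=\tfrac{p}{2}|\nabla v|^{p-2}\nabla|\nabla v|^2$), so that
\[
\int_{\R^2}|\nabla v|^{2p-2}|D^2v|^2\,dx+\int_{\R^2}|\nabla v|^{2p}\,dx+\frac{2(p-1)}{p^2}\int_{\R^2}|\nabla|\nabla v|^p|^2\,dx
=\int_{\R^2}|\nabla v|^{2p-2}\nabla v\cdot\nabla f\,dx .
\]
On the right I integrate by parts (to avoid a factor $\varepsilon^{-1}$ from $\nabla j^\varepsilon$), insert $\Delta v=v-f$ once more, discard the nonnegative term $\int fv|\nabla v|^{2p-2}$, and bound the remaining contribution by Young's inequality, absorbing a small multiple of $\int|\nabla v|^{2p-2}|D^2v|^2$; this leaves only $C_p\int f^2|\nabla v|^{2p-2}$. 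Multiplying by a suitably large constant and using $|\nabla|\nabla v|^p|^2\le p^2|\nabla v|^{2p-2}|D^2v|^2$ then produces, with the coefficients required in \eqref{Lp}, the good terms $\tfrac{2(p-1)}{p}\int|\nabla|\nabla v|^p|^2$ and $2p\int|\nabla v|^{2p}$, at the cost of the one bad term $C_p\int f^2|\nabla v|^{2p-2}$.

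Adding the two inequalities, the surviving right-hand side is $\int u^p|\nabla v|^2$ plus $\int f^2|\nabla v|^{2p-2}$ (modulo harmless lower-order pieces). Both are handled by Hölder followed by Lemma~\ref{GN}: the $|\nabla v|$-factors are distributed so that they enter to a sublinear power of the good terms $\int|\nabla v|^{2p}$ and $\int|\nabla|\nabla v|^p|^2$ (in $\R^2$, $|\nabla v|^p\in\dot H^1$ controls every $L^r$-norm of $|\nabla v|$), while the $u$-factors are interpolated between $\|\nabla u^{p/2}\|_{L^2}$ and a lower $L^q$-norm of $u$. The point is that mass conservation $\|u\|_{L^1}=1$ alone is insufficient here, these terms being critical in two dimensions; one must first use the entropy estimates of Propositions~\ref{entropy1}--\ref{energy} (the bounds $\|u\log u\|_{L^1},\,\|v\|_{H^1}\le C(1+t)$ and the dissipation $\int_0^t\|\nabla\sqrt u\|_{L^2}^2\le C(1+t)$, which is where $\chi<4/c_*$ enters), and then bootstrap in $p$ so that the lower $L^q$-norm ($q<p$) invoked at level $p$ is already controlled. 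After this, Young's inequality bounds the two bad terms by $\delta\big(\|\nabla u^{p/2}\|_{L^2}^2+\int|\nabla v|^{2p}+\int|\nabla|\nabla v|^p|^2\big)+C$, and absorbing the $\delta$-terms yields \eqref{Lp}.

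The computations in the first two steps are routine; the real difficulty is the closing argument. Because $\int u^p|\nabla v|^2$ and $\int f^2|\nabla v|^{2p-2}$ are scale-critical in $\R^2$, they cannot be absorbed into the dissipation using mass conservation alone, so the generalized Gagliardo--Nirenberg inequality (Lemma~\ref{GN}, proved on the whole space in the Appendix), the full set of entropy bounds (hence the smallness $\chi<4/c_*$), and the bootstrap structure in $p$ are all essential; moreover the Young-inequality bookkeeping must be arranged so that precisely the coefficients $\tfrac{2(p-1)}{p}$ and $2p$ stated in \eqref{Lp} remain, and every constant must be tracked as independent of $\varepsilon$ (using $\|u\ast j^\varepsilon\|_{L^q}\le\|u\|_{L^q}$ and never differentiating $j^\varepsilon$).
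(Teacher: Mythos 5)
Your proposal follows essentially the same route as the paper (the strategy of \cite{TW2}): test the parabolic equation with $p{u^\varepsilon}^{p-1}$, derive the $|\nabla v^\varepsilon|^{2p-2}$-weighted estimate from the elliptic equation via the Bochner identity, sum, and close with Lemma~\ref{GN} plus the entropy-level bounds. Your treatment of the $v^\varepsilon$-part (substituting $\Delta v^\varepsilon=v^\varepsilon-f$ and discarding the nonnegative $\int f v^\varepsilon|\nabla v^\varepsilon|^{2p-2}$ term, instead of invoking $|\Delta v^\varepsilon|^2\le 2|D^2v^\varepsilon|^2$ as in \eqref{12}) is an equivalent variant and, as you note, correctly avoids differentiating $j^\varepsilon$, just as the paper does by integrating by parts in \eqref{12}. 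The one place where your write-up deviates in substance is the closing: the paper does \emph{not} bootstrap in $p$. After Young's inequality the bad terms are $\nu_1\int|\nabla v^\varepsilon|^{2p+2}+c_1\int {u^\varepsilon}^{p+1}$; the first is absorbed by ordinary Gagliardo--Nirenberg using $\||\nabla v^\varepsilon|^p\|_{L^{2/p}}^{2/p}=\|\nabla v^\varepsilon\|_{L^2}^2$, bounded by \eqref{6}, and the second by Lemma~\ref{GN} applied with $\varphi={u^\varepsilon}^{p/2}$, $r=2/p$, $\alpha=p/2$, so that the low-order quantities are exactly $\int u^\varepsilon=1$ and $\int u^\varepsilon|\log u^\varepsilon|$ from \eqref{8}; the whole point of the logarithmic weight in Lemma~\ref{GN} is that it supplies an arbitrarily small constant $\delta$ in front of $\|\nabla {u^\varepsilon}^{p/2}\|_{L^2}^2$, which is what lets you absorb at each fixed $p$ without any induction. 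Your proposed alternative --- interpolating against a lower $L^q$-norm of $u^\varepsilon$ with $1<q<p$ and bootstrapping --- would stumble at the base case $p=2$, since no such $L^q$ bound is available from Propositions~\ref{entropy1}--\ref{energy} (only mass and the entropy), so you should either drop the bootstrap or make explicit that the ``lower norm'' is the log-weighted $L^1$ quantity, in which case the induction is superfluous. Also a small point of bookkeeping: $\chi<4/c_*$ enters this proposition only through the a priori bounds \eqref{6} and \eqref{8} inherited from the entropy propositions; the time-integrated dissipation of $\nabla\sqrt{u^\varepsilon}$ is not used here.
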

\begin{proof}
We multiply the first equation of \eqref{kellersegelmedium} by $p{u^\varepsilon}^{p-1}$ to get
\begin{align}\label{9}
&\frac{d}{dt}\int_{\mathbb{R}^2}{u^\varepsilon}^p\,dx\nonumber\\
=&-p(p-1)\int_{\mathbb{R}^2}e^{-{v^\varepsilon}}{u^\varepsilon}^{p-2}|\nabla {u^\varepsilon}|^2\,dx
+p(p-1)\int_{\mathbb{R}^2}e^{-{v^\varepsilon}}{u^\varepsilon}^{p-1}\nabla {v^\varepsilon}\cdot\nabla {u^\varepsilon}\,dx
-p(p-1)\int_{\mathbb{R}^2}{u^\varepsilon}^{p-2}|\nabla {u^\varepsilon}|^2\,dx\nonumber\\
\leq &-\frac{2(p-1)}{p}\int_{\mathbb{R}^2}e^{-{v^\varepsilon}}|\nabla {u^\varepsilon}^{p/2}|^2\,dx
+\frac{p(p-1)}{2}\int_{\mathbb{R}^2}e^{-{v^\varepsilon}}{u^\varepsilon}^p|\nabla {v^\varepsilon}|^2\,dx
-\frac{4(p-1)}{p}\int_{\mathbb{R}^2}|\nabla {u^\varepsilon}^{p/2}|^2\,dx.
\end{align}

Taking the first derivative on both sides of the second equation of \eqref{kellersegelmedium},
then multiplying the result by $2p|\nabla {v^\varepsilon}|^{2p-2}\nabla {v^\varepsilon}$, we obtain that
\begin{align}\label{10}
-2p\int_{\mathbb{R}^2}\nabla\Delta {v^\varepsilon}\cdot|\nabla {v^\varepsilon}|^{2p-2}\nabla {v^\varepsilon}\,dx
+2p\int_{\mathbb{R}^2}|\nabla {v^\varepsilon}|^{2p}\,dx
=2p\chi\int_{\mathbb{R}^2}|\nabla {v^\varepsilon}|^{2p-2}\nabla {u^\varepsilon}\ast j^\varepsilon\cdot \nabla {v^\varepsilon}\,dx.
\end{align}
Based on the fact that $2\nabla\Delta {v^\varepsilon}\cdot\nabla {v^\varepsilon}=\Delta|\nabla {v^\varepsilon}|^2-2|D^2{v^\varepsilon}|^2$
with $|D^2v^\varepsilon|^2:=\sum_{i,j=1}^2 |D_{ij}v^\varepsilon|^2$,
the first term on the left-hand side of \eqref{10} can be divided into two terms
\begin{align}\label{11}
-2p\int_{\mathbb{R}^2}\nabla \Delta {v^\varepsilon}\cdot |\nabla {v^\varepsilon}|^{2p-2}\nabla {v^\varepsilon}\,dx
&=-p\int_{\mathbb{R}^2}\Delta |\nabla {v^\varepsilon}|^2|\nabla {v^\varepsilon}|^{2p-2}\,dx
+2p\int_{\mathbb{R}^2}|\nabla {v^\varepsilon}|^{2p-2}|D^2{v^\varepsilon}|^2\,dx\nonumber\\
&=\frac{4(p-1)}{p}\int_{\mathbb{R}^2}|\nabla |\nabla {v^\varepsilon}|^p|^2\,dx
+2p\int_{\mathbb{R}^2}|\nabla {v^\varepsilon}|^{2p-2}|D^2{v^\varepsilon}|^2\,dx.
\end{align}
Taking into account the fact that $|\Delta {v^\varepsilon}|^2\leq 2|D^2 {v^\varepsilon}|^2$, the term on the right-hand side of \eqref{10} can be handled by
\begin{align}\label{12}
&2p\chi\int_{\mathbb{R}^2}|\nabla {v^\varepsilon}|^{2p-2}\nabla {u^\varepsilon}\ast j^\varepsilon\cdot \nabla {v^\varepsilon}\,dx\nonumber\\
=&-4p(p-1)\chi\int_{\mathbb{R}^2}|\nabla {v^\varepsilon}|^{2p-2}D^2 {v^\varepsilon} {u^\varepsilon}\ast j^\varepsilon\,dx
-2p\chi\int_{\mathbb{R}^2}|\nabla {v^\varepsilon}|^{2p-2}{u^\varepsilon}\ast j^\varepsilon\Delta {v^\varepsilon}\,dx\nonumber\\
\leq &p\int_{\R^2}|\nabla {v^\varepsilon}|^{2p-2}|D^2 {v^\varepsilon}|^2\,dx
+4p(p-1)^2\chi^2\int_{\R^2}|\nabla {v^\varepsilon}|^{2p-2}({u^\varepsilon}\ast j^\varepsilon)^2\,dx\nonumber\\
&+\frac{p}{2}\int_{\R^2}|\nabla {v^\varepsilon}|^{2p-2}|\Delta {v^\varepsilon}|^2\,dx
+2p\chi^2\int_{\R^2}|\nabla {v^\varepsilon}|^{2p-2}({u^\varepsilon}\ast j^\varepsilon)^2\,dx\nonumber\\
\leq &2p\int_{\mathbb{R}^2}|\nabla {v^\varepsilon}|^{2p-2}|D^2{v^\varepsilon}|^2\,dx
+\big(4p(p-1)^2+2p \big)\chi^2\int_{\mathbb{R}^2}|\nabla {v^\varepsilon}|^{2p-2}({u^\varepsilon}\ast j^\varepsilon)^2\,dx.
\end{align}
Inserting \eqref{11} and \eqref{12} into \eqref{10}, we have
\begin{align}\label{13}
\frac{4(p-1)}{p}\int_{\mathbb{R}^2}|\nabla |\nabla {v^\varepsilon}|^p|^2\,dx+2p\int_{\mathbb{R}^2}|\nabla {v^\varepsilon}|^{2p}\,dx
\leq \big(4p(p-1)^2+2p \big)\chi^2\int_{\mathbb{R}^2}|\nabla {v^\varepsilon}|^{2p-2}({u^\varepsilon}\ast j^\varepsilon)^2\,dx.
\end{align}

Combining \eqref{9} and \eqref{13} to infer that
\begin{align}\label{14}
&\frac{d}{dt}\int_{\mathbb{R}^2}{u^\varepsilon}^p\,dx+\frac{2(p-1)}{p}\int_{\mathbb{R}^2}e^{-{v^\varepsilon}}|\nabla {u^\varepsilon}^{p/2}|^2\,dx
+\frac{4(p-1)}{p}\int_{\mathbb{R}^2}|\nabla |\nabla {v^\varepsilon}|^p|^2\,dx\nonumber\\
&+2p\int_{\mathbb{R}^2}|\nabla {v^\varepsilon}|^{2p}\,dx+\frac{4(p-1)}{p}\int_{\mathbb{R}^2}|\nabla {u^\varepsilon}^{p/2}|^2\,dx\nonumber\\
\leq &\frac{p(p-1)}{2}\int_{\mathbb{R}^2}e^{-{v^\varepsilon}}{u^\varepsilon}^p|\nabla {v^\varepsilon}|^2\,dx
+\big(4p(p-1)^2+2p \big)\chi^2\int_{\mathbb{R}^2}|\nabla {v^\varepsilon}|^{2p-2}({u^\varepsilon}\ast j^\varepsilon)^2\,dx\nonumber\\
\leq &\nu_1\int_{\mathbb{R}^2}|\nabla {v^\varepsilon}|^{2p+2}\,dx+c_1\int_{\mathbb{R}^2}{u^\varepsilon}^{p+1}\,dx,
\end{align}
where $\nu_1>0$ will be determined later and $c_1$ is a positive constant dependent on $\nu_1$.

It is time to deal with the terms on the right-hand side of \eqref{14}.
It follows from \eqref{6} and the Gagliardo-Nirenberg inequality that
\begin{align*}
\nu_1\|\nabla {v^\varepsilon}\|^{2p+2}_{L^{2p+2}(\mathbb{R}^2)}&=\nu_1\||\nabla {v^\varepsilon}|^p \|^{2(p+1)/p}_{L^{2(p+1)/p}(\mathbb{R}^2)}
\leq c_3\nu_1\|\nabla|\nabla {v^\varepsilon}|^p\|^2_{L^2(\mathbb{R}^2)}\||\nabla {v^\varepsilon}|^p\|^{2/p}_{L^{2/p}(\mathbb{R}^2)}\\
&\leq c_3c_2\nu_1\|\nabla |\nabla {v^\varepsilon}|^p\|^2_{L^2(\mathbb{R}^2)}.
\end{align*}
Taking $\nu_1=\frac{2(p-1)}{pc_3c_2}$, we have
\begin{align}\label{15}
\nu_1\|\nabla {v^\varepsilon}\|^{2p+2}_{L^{2p+2}(\mathbb{R}^2)}
\leq\frac{2(p-1)}{p}\int_{\mathbb{R}^2}|\nabla |\nabla {v^\varepsilon}|^p|^2\,dx.
\end{align}
According to \eqref{8} and Lemma \ref{GN}, the second term on the right-hand side of \eqref{14} can be bounded by
\begin{align}\label{16}
c_1\int_{\mathbb{R}^2}{u^\varepsilon}^{p+1}\,dx
&=c_1\|{u^\varepsilon}^{p/2}\|^{2(p+1)/p}_{L^{2(p+1)/p}(\mathbb{R}^2)}\nonumber\\
&\leq c_1\nu_2\|\nabla {u^\varepsilon}^{p/2}\|^2_{L^2(\mathbb{R}^2)}
\big\|{u^\varepsilon}^{p/2}|\log {u^\varepsilon}^{p/2}|^{p/2}\big\|^{2/p}_{L^{2/p}(\mathbb{R}^2)}
+c_4\|{u^\varepsilon}^{p/2}\|^{2/p}_{L^{2/p}(\mathbb{R}^2)}\nonumber\\
&=\frac{c_1\nu_2p}{2}\|\nabla {u^\varepsilon}^{p/2}\|^2_{L^2(\mathbb{R}^2)}\int_{\R^2}{u^\varepsilon}|\log {u^\varepsilon}|\,dx
+c_4\int_{\R^2}{u^\varepsilon}\,dx\nonumber\\
&\leq \frac{c_1c_5\nu_2p}{2}\|\nabla {u^\varepsilon}^{p/2}\|^2_{L^2(\mathbb{R}^2)}+c_4\nonumber\\
&\leq \frac{2(p-1)}{p}\|\nabla {u^\varepsilon}^{p/2}\|^2_{L^2(\mathbb{R}^2)}+c_4,
\end{align}
where $\nu_2=\frac{4(p-1)}{p^2c_1c_5}$.
Inserting \eqref{15} and \eqref{16} into \eqref{14}, we have \eqref{Lp}.
\end{proof}

The estimates obtained in Proposition \ref{energy} and Proposition \ref{pLp} allow us to take the limit $\varepsilon\rightarrow 0$
in the weak formulation of problem \eqref{kellersegelmedium}.
\begin{proposition}{\rm{(Compactness argument).}}
There exists a subsequence $({u^\varepsilon},{v^\varepsilon})$ (not relabeled) satisfying
\begin{align}
&{u^\varepsilon}\rightharpoonup u\quad {\rm{in}}\quad L^2(0,T;H^1(\R^2)),\label{a6}\\
&{u^\varepsilon}\rightarrow  u\quad {\rm{in}}\quad L^p((0,T)\times\R^2),\quad p\in[2,\infty),\label{17}\\
&{v^\varepsilon}\stackrel{*}{\rightharpoonup}  v\quad {\rm{in}}\quad  L^2(0,T;W^{3,2}(\R^2))\cap L^\infty(0,T;W^{2,p}(\R^2)),\quad p\in(1,\infty),\label{18}\\
&{v^\varepsilon}\rightarrow v\quad {\rm{in}}\quad L^p((0,T)\times\R^2), \quad p\in [2,\infty ),  \label{19}\\
&e^{-{v^\varepsilon}}\rightarrow e^{-v}\quad {\rm{in}}\quad L^p((0,T)\times\R^2),\quad p\in[2,\infty).\label{24}
\end{align}
\end{proposition}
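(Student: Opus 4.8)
The plan is to obtain all five convergences from the uniform‑in‑$\varepsilon$ bounds of Propositions~\ref{entropy1}, \ref{energy1}, \ref{energy} and \ref{pLp} by a compactness argument, localizing in space to cope with the unboundedness of $\R^2$. First I collect the bounds: estimate \eqref{Lp} with $p=2$ together with \eqref{8} shows that $\{u^\varepsilon\}$ is bounded in $L^2(0,T;H^1(\R^2))\cap L^\infty(0,T;L^p(\R^2))$ for every $p\in[1,\infty)$ and that $\{|x|^2u^\varepsilon\}$ is bounded in $L^\infty(0,T;L^1(\R^2))$. Applying the classical $W^{2,p}$ and $W^{3,2}$ elliptic estimates to $-\Delta v^\varepsilon+v^\varepsilon=\chi u^\varepsilon\ast j^\varepsilon$, together with Young's inequality $\|u^\varepsilon\ast j^\varepsilon\|_{L^r}\le\|u^\varepsilon\|_{L^r}$ and the $L^2(0,T;H^1)$ bound on $u^\varepsilon$, gives that $\{v^\varepsilon\}$ is bounded in $L^2(0,T;W^{3,2}(\R^2))\cap L^\infty(0,T;W^{2,p}(\R^2))$ for $p\in(1,\infty)$; since $2p>2$, the Sobolev embedding $W^{2,p}(\R^2)\hookrightarrow L^\infty(\R^2)$ also makes $\{v^\varepsilon\}$ bounded in $L^\infty((0,T)\times\R^2)$. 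Banach--Alaoglu then yields a subsequence realizing \eqref{a6} and \eqref{18}, the limit being identified as a solution of $-\Delta v+v=\chi u$ in Step~3 below.

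Next I prove the strong convergence \eqref{17}. Writing the first equation of \eqref{kellersegelmedium} as $\partial_t u^\varepsilon=\Delta(e^{-v^\varepsilon}u^\varepsilon+u^\varepsilon)$ and noting that $\nabla(e^{-v^\varepsilon}u^\varepsilon)=e^{-v^\varepsilon}\nabla u^\varepsilon-e^{-v^\varepsilon}u^\varepsilon\nabla v^\varepsilon$ is bounded in $L^2((0,T)\times\R^2)$ — using $0\le e^{-v^\varepsilon}\le1$, $\nabla u^\varepsilon\in L^2((0,T)\times\R^2)$, $u^\varepsilon\in L^4((0,T)\times\R^2)$ and $\nabla v^\varepsilon\in L^4((0,T)\times\R^2)$, the last two from \eqref{Lp} — one gets that $e^{-v^\varepsilon}u^\varepsilon+u^\varepsilon$ is bounded in $L^2(0,T;H^1(\R^2))$, hence $\partial_t u^\varepsilon$ is bounded in $L^2(0,T;H^{-1}(\R^2))$. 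For each ball $B_R$ the Aubin--Lions--Simon lemma, via the compact embedding $H^1(B_R)\hookrightarrow L^2(B_R)$, shows that $u^\varepsilon$ is relatively compact in $L^2((0,T)\times B_R)$; a diagonal extraction over $R\in\N$ fixes a subsequence with $u^\varepsilon\to u$ in $L^2((0,T)\times B_R)$ for every $R$. The tail is uniformly small: for $q>2$, interpolation gives $\|u^\varepsilon(t)\|_{L^2(\{|x|>R\})}\le\|u^\varepsilon(t)\|_{L^1(\{|x|>R\})}^{\theta}\|u^\varepsilon(t)\|_{L^q(\R^2)}^{1-\theta}$, and the second moment bound \eqref{8} forces $\|u^\varepsilon(t)\|_{L^1(\{|x|>R\})}\le C(1+T)/R^2$ uniformly in $t$ and $\varepsilon$, so the tail tends to $0$ as $R\to\infty$. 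Combining these, $u^\varepsilon\to u$ strongly in $L^2((0,T)\times\R^2)$, and interpolating against the uniform bound in $L^q((0,T)\times\R^2)$ for $q>p$ upgrades this to \eqref{17}; the weak $L^2(0,T;H^1)$ limit necessarily coincides with $u$.

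Finally I treat $v^\varepsilon$ and $e^{-v^\varepsilon}$. From \eqref{17} and $\|j^\varepsilon\|_{L^1}=1$ one gets $u^\varepsilon\ast j^\varepsilon\to u$ in $L^2((0,T)\times\R^2)$; boundedness of $(I-\Delta)^{-1}$ from $L^2$ into $H^2$ (and from $H^1$ into $H^3$) then gives that $v^\varepsilon\to v:=\chi(I-\Delta)^{-1}u$ strongly in $L^2(0,T;W^{3,2}(\R^2))$, which identifies this limit with the weak‑$*$ limit of \eqref{18} and shows $-\Delta v+v=\chi u$. Interpolating the strong $L^2((0,T)\times\R^2)$ convergence against the uniform $L^\infty((0,T)\times\R^2)$ bound from Step~1 (via $\|g\|_{L^p}\le\|g\|_{L^2}^{2/p}\|g\|_{L^\infty}^{1-2/p}$) yields \eqref{19}. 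Since $t\mapsto e^{-t}$ is $1$‑Lipschitz on $[0,\infty)$ and $v^\varepsilon,v\ge0$, we have $|e^{-v^\varepsilon}-e^{-v}|\le|v^\varepsilon-v|$ pointwise, hence $\|e^{-v^\varepsilon}-e^{-v}\|_{L^p((0,T)\times\R^2)}\le\|v^\varepsilon-v\|_{L^p((0,T)\times\R^2)}\to0$, which is \eqref{24}.

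The main obstacle is the strong compactness of $u^\varepsilon$ in Step~2: on $\R^2$ the embedding $H^1\hookrightarrow L^2$ is not compact, so Aubin--Lions must be localized to balls and completed by the uniform tightness coming from the second moment estimate \eqref{8}; in addition, the $L^2(0,T;H^{-1})$ bound on $\partial_t u^\varepsilon$ depends crucially on the joint integrability of $u^\varepsilon$ and $\nabla v^\varepsilon$ supplied by the $L^p$ estimate \eqref{Lp}. Everything else is either weak‑$*$ compactness or elementary interpolation.
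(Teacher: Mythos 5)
Your proposal is correct, and at the top level it follows the same strategy as the paper (uniform bounds from Propositions \ref{energy} and \ref{pLp}, a bound on $\partial_t u^\varepsilon$ in $L^2(0,T;H^{-1}(\R^2))$, Aubin--Lions for the strong compactness of $u^\varepsilon$, then elliptic regularity and interpolation for $v^\varepsilon$), but two implementation choices genuinely differ. For \eqref{17} the paper applies Aubin--Lions once on the whole space, using the compact embedding $H^1(\R^2)\cap L^1(\R^2,(1+|x|^2)\,dx)\hookrightarrow L^2(\R^2)$, so the second-moment bound is absorbed into the functional setting; you instead localize Aubin--Lions to balls, extract a diagonal subsequence, and control the tails by combining the moment bound \eqref{8} with $L^1$--$L^q$ interpolation. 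The two arguments are equivalent in substance; yours is more elementary and self-contained, the paper's is shorter once the weighted compact embedding is granted. For \eqref{19} the paper goes directly through the potential, writing $v^\varepsilon-v$ in terms of $\Phi\ast(u^\varepsilon-u)$ plus a mollification error and using Young's inequality with $\Phi\in L^1(\R^2)$, so the $L^p$ convergence of $v^\varepsilon$ is inherited from \eqref{17} for each $p$ separately; you obtain strong $L^2$ convergence of $v^\varepsilon$ via the resolvent $(I-\Delta)^{-1}$ and upgrade it by $L^2$--$L^\infty$ interpolation, relying on the uniform $L^\infty$ bound for $v^\varepsilon$ coming from \eqref{26} and Sobolev embedding, which you do establish; both routes are valid. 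One small overstatement: strong convergence of $v^\varepsilon$ in $L^2(0,T;W^{3,2}(\R^2))$ does not follow from your argument, since it would require strong $H^1$ convergence of $u^\varepsilon\ast j^\varepsilon$, which you only have weakly; what your argument actually yields is strong convergence in $L^2(0,T;H^2(\R^2))$, and this is all that is needed to identify the weak-$*$ limit in \eqref{18} and to conclude \eqref{19} and \eqref{24}, so the proposition itself is unaffected.
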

\begin{proof}
By \eqref{Lp} with $p=2$, we have \eqref{a6}. Due to \eqref{Lp} and the first equation of \eqref{kellersegelmedium}, it holds
\begin{align}\label{25}
\|\partial_t {u^\varepsilon}\|_{L^2(0,T;H^{-1}(\R^2))}\leq C.
\end{align}
Proposition \ref{energy} and Proposition \ref{pLp} show that $\|u^\varepsilon\|_{L^2(0,T;H^1(\R^2))} $ and $\|(1+|x|^2)u^\varepsilon\|_{L^\infty(0,T;L^1(\R^2))}$ are bounded. Since $H^1(\mathbb{R}^2)\cap L^1(\R^2,(1+|x|^2)dx)$ is embedded compactly in $L^2(\R^2)$, Aubin-Lions lemma implies that
\begin{align*}
{u^\varepsilon}\rightarrow u\quad {\rm{in}}\quad L^2((0,T)\times\R^2).
\end{align*}
With this observation at hand, and in accordance with \eqref{Lp}, we obtain \eqref{17}.

According to \eqref{6}, \eqref{Lp} and the second equation of \eqref{kellersegelmedium}, we infer that, for any $r\in [2,\infty)$,
\begin{align*}
\|\Delta {v^\varepsilon}\|_{L^\infty(0,T;L^r(\R^2))}
&\leq \|\Delta {v^\varepsilon}-{v^\varepsilon}\|_{L^\infty(0,T;L^r(\R^2))}+\|{v^\varepsilon}\|_{L^\infty(0,T;L^r(\R^2))}\\
&\leq \chi\|{u^\varepsilon}\|_{L^\infty(0,T;L^r(\R^2))}+C\|{v^\varepsilon}\|_{L^\infty(0,T;H^1(\R^2))}\leq C.
\end{align*}
In a similar way, it holds
\begin{align*}
\|\Delta {v^\varepsilon}\|_{L^\infty(0,T;L^1(\R^2))}
&\leq \chi\|{u^\varepsilon}\|_{L^\infty(0,T;L^1(\R^2))}+\|{v^\varepsilon}\|_{L^\infty(0,T;L^1(\R^2))}\leq C,
\end{align*}
where we have used
\begin{align*}
\|{v^\varepsilon}\|_{L^\infty(0,T;L^1(\R^2))}
\leq \|\Phi\|_{L^1(\R^2)}\|{u^\varepsilon}\|_{L^\infty(0,T;L^1(\R^2))}\|j^\varepsilon\|_{L^1(\R^2)}\leq C.
\end{align*}
Here the Yukawa potential $\tilde\Phi\in L^q(\mathbb{R}^2)$ $(1\leq q<\infty)$ \cite{LL}.
The estimates above allow us to apply the interpolation inequality to obtain that $\Delta v^\varepsilon$ is
uniformly bounded in $L^\infty(0,T;L^{q}(\R^2))$ $(1\leq q<\infty)$.
This yields, together with the $L^\infty(0,T;L^1(\R^2))$ bound for $v^\varepsilon$, that for $1<p<\infty$
\begin{align}\label{26}
\|{v^\varepsilon}\|_{L^\infty(0,T;W^{2,p}(\R^2))}\leq C.
\end{align}
In view of \eqref{6}, \eqref{Lp} with $p=2$ and the second equation of \eqref{kellersegelmedium}, we have
\begin{align*}
\|\nabla\Delta {v^\varepsilon}\|_{L^2((0,T)\times\R^2)}
\leq \|\nabla {v^\varepsilon}\|_{L^2((0,T)\times\R^2)}+\chi\|\nabla {u^\varepsilon}\|_{L^2((0,T)\times\R^2)}\leq C.
\end{align*}
Summarizing, this shows that
\begin{align}
\|{v^\varepsilon}\|_{L^2(0,T;W^{3,2}(\R^2))}\leq C,
\end{align}
which implies \eqref{18}.

It is easy to see
\begin{align*}
\|u\ast\Phi\|_{L^p((0,T)\times\R^2)}\leq \|u\|_{L^p((0,T)\times\R^2)}\|\Phi\|_{L^1(\R^2)}
\leq C.
\end{align*}
Thus, in accordance with \eqref{17}, we have
\begin{align*}
\|{v^\varepsilon}-v\|_{L^p((0,T)\times\R^2)}
&\leq \|{u^\varepsilon}\ast \Phi\ast j^\varepsilon-u\ast \Phi\ast j^\varepsilon\|_{L^p((0,T)\times\R^2)}
+\|u\ast \Phi\ast j^\varepsilon-u\ast \Phi\|_{L^p((0,T)\times\R^2)}\\
&\leq \|{u^\varepsilon}-u\|_{L^p((0,T)\times\R^2)}\|\Phi\|_{L^1(\R^2)}
+\|u\ast \Phi\ast j^\varepsilon-u\ast \Phi\|_{L^p((0,T)\times\R^2)}
\rightarrow 0,\quad {\rm{as}}\;\varepsilon\rightarrow 0,
\end{align*}
this implies \eqref{19}.
By \eqref{19}, the bound for $e^{-{v^\varepsilon}}$ in $L^\infty((0,T)\times\R^2)$ and mean value theorem, we get \eqref{24}.
\end{proof}

Taking into account the convergences \eqref{a6}-\eqref{24}, we can pass to the limit $\varepsilon\rightarrow 0$ in
the weak formulation of the problem \eqref{kellersegelmedium}, for the variables $({u^\varepsilon},{v^\varepsilon})$,
to conclude that the couple $(u,v)$ verifies the first equation of
\eqref{singalKellerSegel} in the sense of $L^2(0,T;H^{-1}(\R^2))$
and the second equation of \eqref{singalKellerSegel}
in the sense of $L^2(0,T;H^1(\R^2))\cap L^\infty(0,T;L^p(\R^2))$ $(1<p<\infty)$. \\

It remains to prove \eqref{ad1} and \eqref{ad2}.
Integrating \eqref{28} from $0$ to $t\in [0,T]$, we obtain that
\begin{align} \label{F^vareps_ineq_integrated}
\mathcal{F}^\varepsilon(u^\varepsilon,{v^\varepsilon})(t)+\int_0^t \int_{\R^2}u^\varepsilon e^{-{v^\varepsilon}}|\nabla (\log u^\varepsilon-{v^\varepsilon})|^2\,dxds
+(4-c_*\chi)\int_0^t \int_{\R^2}|\nabla\sqrt{u^\varepsilon}|^2\,dxds
\leq \mathcal{F}^\varepsilon(u^\varepsilon,{v^\varepsilon})(0).
\end{align}
Since $4-c_*\chi >0$, the term with this factor is non-negative and hence can be left out.
Now we take $\varepsilon \to 0$ and discuss every term separately.
We start with $\mathcal{F}^\varepsilon(u^\varepsilon,{v^\varepsilon})(t)$.
In view of \eqref{17}, we achieve that $u^\varepsilon (t) \to u(t)$ in $L^2(\R^2) $ for a.e. $t \in [0,T]$.
It follows from \eqref{8} that $\int_{\mathbb{R}^2}u^\varepsilon(t,x) |x|^2\,dx$ and
$\int_{\mathbb{R}^2}u^\varepsilon|\log u^\varepsilon|(t,x)\,dx $ are uniformly bounded w.r.t $\varepsilon$ for a.e. $t \in [0,T]$.
As the assumptions of Lemma \ref{liminf_ulogu} are satisfied, we receive that, for a.e. $t\in [0,T]$,
$$\int_{\R^2} u(t,x) \log u(t,x) \, dx\leq \liminf_{\varepsilon \to 0} \int_{\R^2} u^\varepsilon (t,x) \log u^\varepsilon (t,x) \, dx .$$
Since $\Phi \in W^{1,1}(\R^2)$ and $\nabla v^\varepsilon =u^\varepsilon \ast \nabla \Phi \ast j^\varepsilon$,
we proceed similarly as in \eqref{19} to get
\begin{align} \label{nabla_v^vareps_conv}
    \nabla v^\varepsilon \to \nabla v \quad\text{in}\quad L^p((0,T)\times\R^2),\quad  p \in [2, \infty),
\end{align}
which implies $\nabla v^\varepsilon (t) \to \nabla v(t) $ in $L^2(\R^2) $ for a.e. $t \in [0,T]$.
In a similar way, \eqref{19} and \eqref{17} show that $ v^\varepsilon (t) \to v(t)$ and $u^\varepsilon (t) \to u(t)$ in $L^2(\R^2) $ for a.e. $t \in [0,T]$.
These convergences lead to the convergence of the other terms in $\mathcal{F}^\varepsilon(u^\varepsilon,{v^\varepsilon})(t)$
and by the superadditivity of limit inferior we infer that, for a.e. $t \in [0,T]$,
$$ \mathcal{F}(u,v)(t) \leq  \liminf_{\varepsilon \to 0} \mathcal{F}^\varepsilon(u^\varepsilon,{v^\varepsilon})(t).$$
Next we dicuss $\mathcal{F}^\varepsilon(u^\varepsilon,{v^\varepsilon})(0)$.
Since $u^\varepsilon (0) = u_0$ and $v^\varepsilon (0)= v(0) \ast j^\varepsilon$, it holds
$$ \mathcal{F}^\varepsilon(u^\varepsilon,{v^\varepsilon})(0) = \int_{\mathbb{R}^2}\Big(u_0\log u_0
+\frac{|\nabla {v(0) \ast j^\varepsilon}|^2}{2}+\frac{{|v(0) \ast j^\varepsilon|}^2}{2}
-\frac{u_0 {v(0) \ast j^\varepsilon}}{2}-\chi\frac{u_0 {v(0) \ast j^\varepsilon}\ast j^\varepsilon }{2}\Big)\,dx.$$
By assumption $u_0 \in L^2(\R^2)$ (see Theorem \ref{theorempde}) and $\Phi\in W^{1,1}(\R^2)$, we have $v(0) = \Phi \ast u_0 \in W^{1,2}(\R^2)$.
Together with the properties of the mollification, we conclude that
$$ \lim_{\varepsilon \to 0} \mathcal{F}^\varepsilon(u^\varepsilon,{v^\varepsilon})(0) =  \mathcal{F}(u,{v})(0).$$
Now we fix $t \in [0,T]$. For the second term of \eqref{F^vareps_ineq_integrated}, a simple calculation yields
\begin{align} \label{pf_ad1_reform}
\int_0^t\int_{\R^2}u^\varepsilon e^{-v^\varepsilon}|\nabla(\log u^\varepsilon-v^\varepsilon)|^2\,dxds
=\int_0^t\int_{\R^2}\big|e^{-\frac{v^\varepsilon}{2}}(2\nabla\sqrt {u^\varepsilon}-\sqrt{u^\varepsilon}\nabla v^\varepsilon)\big|^2\,dxds.
\end{align}
It follows from \eqref{17} that $\|\sqrt{u^\varepsilon}\|_{L^4((0,T)\times\R^2)}$ converges to $\|\sqrt{u}\|_{L^4((0,T)\times\R^2)}$
and $u^\varepsilon(t,x)$ converges to $u(t,x)$ for a.e. $(t,x)\in(0,T)\times\R^2$.
As it is well-known, those convergences lead to
\begin{align} \label{conv_sqrt_u^vareps}
\sqrt{u^\varepsilon} \to \sqrt{u} \quad\text{in}\quad L^4((0,T)\times\R^2).
\end{align}
Since $\mathcal{F}^\varepsilon(u^\varepsilon,{v^\varepsilon})(0)$ is uniformly bounded w.r.t. $\varepsilon$,
\eqref{F^vareps_ineq_integrated} and \eqref{7} give an uniform bound for $\nabla\sqrt{u^\varepsilon}$ in $L^2((0,t)\times\R^2)$ w.r.t $\varepsilon$.
This together with \eqref{conv_sqrt_u^vareps} allow us to identify the limit, resulting in
\begin{align} \label{weak_conv_nabla_sqrt_u^vareps}
\nabla\sqrt{u^\varepsilon} \rightharpoonup \nabla\sqrt{u} \quad\text{in}\quad L^2((0,t)\times\R^2).
\end{align}
We combine this result and $e^{-{\frac{v^\varepsilon}{2}}}\rightarrow e^{-\frac{v}{2}}$ in $L^2((0,T)\times\R^2)$ to derive that $e^{-{\frac{v^\varepsilon}{2}}} \nabla\sqrt{u^\varepsilon}  \rightharpoonup e^{-\frac{v}{2}} \nabla\sqrt{u}$ in $L^1((0,t)\times\R^2)$.
Thus, in accordance with the uniform bound for $e^{-{\frac{v^\varepsilon}{2}}} \nabla\sqrt{u^\varepsilon}$ in $L^2((0,t)\times\R^2)$, we obtain that
\begin{align} \label{weak_conv_e^-(v^vareps/2)_nabla_sqrt_u^vareps}
e^{-{\frac{v^\varepsilon}{2}}} \nabla\sqrt{u^\varepsilon}  \rightharpoonup e^{-\frac{v}{2}} \nabla\sqrt{u} \quad\text{in}\quad L^2((0,t)\times\R^2).
\end{align}
The convergences \eqref{nabla_v^vareps_conv} and \eqref{conv_sqrt_u^vareps} give $e^{-\frac{v^\varepsilon}{2}}\sqrt{u^\varepsilon}\nabla v^\varepsilon \to e^{-\frac{v}{2}}\sqrt{u}\nabla v$ in $L^2((0,T)\times\R^2)$.
Together with \eqref{pf_ad1_reform}, \eqref{weak_conv_e^-(v^vareps/2)_nabla_sqrt_u^vareps} and the weak lower semicontinuity of norms, we have
\begin{align*}
\int_0^t\int_{\R^2}u e^{-v}|\nabla(\log u-v)|^2\,dxds \leq \liminf_{\varepsilon \to 0} \int_0^t\int_{\R^2}u^\varepsilon e^{-v^\varepsilon}|\nabla(\log u^\varepsilon-v^\varepsilon)|^2\,dxds .
\end{align*}
Finally, taking into account the monotonicity and superadditivity of the limit inferior, the convergences of all the terms lead to \eqref{ad1}. 

Let $p \in [2,\infty )$. To show \eqref{ad2}, we integrate \eqref{Lp} from $0$ to $t$ and obtain that, for $t \in [0,T]$,
\begin{align} \label{p_geq_2_vareps_ineq_integrated}
&\int_{\mathbb{R}^2}{u^\varepsilon (t)}^p\,dx
+\frac{2(p-1)}{p} \int_0^t \int_{\mathbb{R}^2}e^{-{v^\varepsilon}}|\nabla {u^\varepsilon}^{p/2}|^2\,dxds
+\frac{2(p-1)}{p} \int_0^t \int_{\mathbb{R}^2}|\nabla|\nabla {v^\varepsilon}|^p|^2\,dxds\nonumber\\
&+2p\int_0^t\int_{\mathbb{R}^2}|\nabla {v^\varepsilon}|^{2p}\,dxds
+\frac{2(p-1)}{p} \int_0^t \int_{\mathbb{R}^2}|\nabla {u^\varepsilon}^{p/2}|^2\,dxds \leq \int_{\mathbb{R}^2}u_0^p\,dx + CT.
\end{align}
By \eqref{17}, we deduce that $u^\varepsilon(t) \to u(t)$ in $L^p(\R^2)$ for a.e. $t \in [0,T]$.
It follows from $u^\varepsilon \geq 0$ and Fatou's lemma that, for a.e. $t\in [0,T]$,
$$ \int_{\mathbb{R}^2}{u (t)}^p\,dx \leq \liminf_{\varepsilon \to 0} \int_{\mathbb{R}^2}{u^\varepsilon (t)}^p\,dx .$$
Since the right-hand side of \eqref{p_geq_2_vareps_ineq_integrated}  doesn't depend on $\varepsilon$, it provides an uniform bound for $\nabla {u^\varepsilon}^{p/2}$ in $L^2((0,t)\times\R^2) $ w.r.t $\varepsilon$.
Thus, in accordance with ${u^\varepsilon}^{p/2} \to {u}^{p/2}$ in $L^2((0,T)\times\R^2)$, which follows from \eqref{17},
we obtain that
$\nabla {u^\varepsilon}^{p/2} \rightharpoonup \nabla {u}^{p/2} $ in $L^2((0,t)\times\R^2) $.
Using the weak lower semicontinuity of norms, we achieve that
$$ \int_0^t \int_{\mathbb{R}^2}|\nabla {u}^{p/2}|^2\,dxds \leq \liminf_{\varepsilon \to 0} \int_0^t \int_{\mathbb{R}^2}|\nabla {u^\varepsilon}^{p/2}|^2\,dxds .$$
Similar to \eqref{weak_conv_e^-(v^vareps/2)_nabla_sqrt_u^vareps}, we infer that $e^{-{\frac{v^\varepsilon}{2}}}\nabla {u^\varepsilon}^{p/2} \rightharpoonup e^{-{\frac{v}{2}}}\nabla {u}^{p/2} $ in $L^2((0,T)\times\R^2)$, which provides
$$  \int_0^t \int_{\mathbb{R}^2}e^{-{v}}|\nabla {u}^{p/2}|^2\,dxds \leq \liminf_{\varepsilon \to 0} \int_0^t \int_{\mathbb{R}^2}e^{-{v^\varepsilon}}|\nabla {u^\varepsilon}^{p/2}|^2\,dxds .$$
From \eqref{nabla_v^vareps_conv} we know that  $\nabla v^\varepsilon \to \nabla v$ in $L^{2p}((0,T)\times\R^2) $, which yields
$$ \int_0^t \int_{\mathbb{R}^2}|\nabla {v}|^{2p}\,dxds = \lim_{\varepsilon \to 0} \int_0^t \int_{\mathbb{R}^2}|\nabla {v^\varepsilon}|^{2p}\,dxds. $$
Taking into account \eqref{nabla_v^vareps_conv},
we proceed similarly as the proof of \eqref{conv_sqrt_u^vareps} to infer that $|\nabla v^\varepsilon |^p \to |\nabla v |^p$ in $L^{2}((0,T)\times\R^2) $.
Further \eqref{p_geq_2_vareps_ineq_integrated} allows us to derive a uniform bound for $\nabla |\nabla v^\varepsilon |^p $ in $L^{2}((0,t)\times\R^2) $ w.r.t $\varepsilon$.
Following the arguments for \eqref{weak_conv_nabla_sqrt_u^vareps} we get $\nabla |\nabla v^\varepsilon |^p \rightharpoonup \nabla |\nabla v |^p  $ in $L^{2}((0,t)\times\R^2) $.
Applying the weak lower semicontinuity of norms once more, we deduce that
$$ \int_0^t \int_{\mathbb{R}^2}|\nabla|\nabla {v}|^p|^2\,dxds \leq \liminf_{\varepsilon \to 0} \int_0^t \int_{\mathbb{R}^2}|\nabla|\nabla {v^\varepsilon}|^p|^2\,dxds. $$
Hence, we complete the proof of \eqref{ad2}.

The boundness of $|x|^2u$ in $L^\infty(0,T;L^1(\mathbb{R}^2))$ follows directly from \eqref{8} and the fact that $u^\varepsilon$ converges to $u$ a.e. in $(0,T)\times\mathbb{R}^2$ by using Fatou's lemma.

\subsection{The error estimates for ${u^\varepsilon}-u$}
First of all, we give the following lemma which will be used later.
\begin{lemma}\label{error}
Let $\varepsilon>0$, $q\in [1,\infty]$ and $f\in W^{1,q}(\R^2)$. Then it holds
\begin{align*}
\|j^\varepsilon\ast f-f\|_{L^q(\R^2)}
\leq \varepsilon\|\nabla f\|_{L^q(\R^2)}.
\end{align*}
\end{lemma}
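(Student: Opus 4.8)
The plan is to prove the estimate first for smooth, compactly supported $f$ and then pass to the limit by density. For $f \in C_c^\infty(\R^2)$ I would use that $\int_{\R^2} j^\varepsilon\,dy = 1$ to write $j^\varepsilon\ast f(x) - f(x) = \int_{\R^2} j^\varepsilon(y)\,(f(x-y)-f(x))\,dy$, and then insert the fundamental theorem of calculus, $f(x-y)-f(x) = -\int_0^1 y\cdot\nabla f(x-sy)\,ds$, to arrive at
\begin{align*}
j^\varepsilon\ast f(x) - f(x) = -\int_{\R^2}\int_0^1 j^\varepsilon(y)\,y\cdot\nabla f(x-sy)\,ds\,dy.
\end{align*}
Taking the $L^q(\R^2)$-norm in $x$, applying Minkowski's integral inequality and using the translation invariance of the Lebesgue measure (the $s$-integral then just contributes a factor $1$) yields
\begin{align*}
\|j^\varepsilon\ast f - f\|_{L^q(\R^2)} \le \Big(\int_{\R^2} j^\varepsilon(y)\,|y|\,dy\Big)\,\|\nabla f\|_{L^q(\R^2)}.
\end{align*}
The substitution $y=\varepsilon z$ gives $\int_{\R^2} j^\varepsilon(y)|y|\,dy = \varepsilon\int_{\R^2} |z|\,j(z)\,dz$, and since the standard mollifier $j$ is supported in the unit ball and satisfies $\int j = 1$, this last integral is $\le 1$; hence the claimed bound holds for $f \in C_c^\infty(\R^2)$.

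For a general $f \in W^{1,q}(\R^2)$ with $q<\infty$ I would approximate $f$ by functions $f_n\in C_c^\infty(\R^2)$ in $W^{1,q}(\R^2)$; since $\|j^\varepsilon\ast g\|_{L^q(\R^2)} \le \|j^\varepsilon\|_{L^1(\R^2)}\|g\|_{L^q(\R^2)}$ by Young's inequality, both $\|j^\varepsilon\ast f_n - f_n\|_{L^q(\R^2)}$ and $\|\nabla f_n\|_{L^q(\R^2)}$ converge to the corresponding quantities for $f$, and the inequality is preserved in the limit. For $q=\infty$ the density of smooth functions in $W^{1,\infty}$ fails, so I would instead work with the Lipschitz representative of $f$: the identity $f(x-y)-f(x) = -\int_0^1 y\cdot\nabla f(x-sy)\,ds$ still holds for a.e.\ $x$ and every fixed $y$, so the same chain of estimates gives the pointwise bound $|j^\varepsilon\ast f(x)-f(x)| \le \varepsilon\|\nabla f\|_{L^\infty(\R^2)}$ for a.e.\ $x$, hence the $L^\infty$ statement.

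This argument is entirely routine and I do not anticipate any genuine obstacle. The only point worth stressing is that one obtains the sharp constant $\varepsilon$ (and not $C\varepsilon$): this relies on the standing convention that the "standard mollification kernel" $j$ has support in $B_1(0)$ in addition to $\int j = 1$; for a general smooth probability density the factor would instead be $\varepsilon\int_{\R^2}|z|\,j(z)\,dz$.
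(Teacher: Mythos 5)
Your proof is correct and takes essentially the same route as the paper: both rest on the identity $f(x-y)-f(x)=-\int_0^1 y\cdot\nabla f(x-sy)\,ds$, translation invariance, and the fact that $|y|\le\varepsilon$ on the support of $j^\varepsilon$. The only cosmetic difference is that the paper bounds the $L^q$-norm for $q<\infty$ by testing directly against $g\in L^{q'}(\R^2)$ (duality) for $f\in W^{1,q}(\R^2)$, whereas you use Minkowski's integral inequality on smooth compactly supported functions followed by a density argument; both are sound.
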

\begin{proof}
For the case $q=\infty$,
\begin{align*}
\|j^\varepsilon\ast f-f\|_{L^\infty(\R^2)}
=\sup_{x\in \R^2}\Big|\int_{B_\varepsilon(0)}|y|j^\varepsilon(y)\frac{f(x-y)-f(x)}{|y|}\,dy \Big|
\leq \varepsilon\|\nabla f\|_{L^\infty(\R^2)}.
\end{align*}
Now we focus on the case $q\in [1,\infty)$. For any $g\in L^{q'}(\R^2)$,
\begin{align*}
\Big|\int_{\R^2}(j^\varepsilon\ast f-f)g(x)\,dx \Big|
&=\Big|\int_{\R^2}\int_{B_\varepsilon(0)}j^\varepsilon(y)\big(f(x-y)-f(x)\big)\,dy g(x)\,dx \Big|\\
&=\Big|\int_{\R^2}\int_{B_\varepsilon(0)}j^\varepsilon(y)\int_0^1\nabla f(x-ry)y\,drdy g(x)\,dx \Big|\\
&\leq \varepsilon\int_0^1\int_{B_\varepsilon(0)}j^\varepsilon(y)\int_{\R^2}|\nabla f(x-ry)||g(x)|\,dxdydr\\
&\leq \varepsilon\|\nabla f\|_{L^q(\R^2)}\|g\|_{L^{q'}(\R^2)}.
\end{align*}
Hence we complete the proof of Lemma \ref{error}.
\end{proof}
Using ${u^\varepsilon}-u\in L^2(0,T;H^1(\R^2))$ to test the following system:
\begin{align*}
\partial_t({u^\varepsilon}-u)=\Delta(e^{-{v^\varepsilon}}{u^\varepsilon}-e^{-v}u)
+\Delta({u^\varepsilon}-u),
\end{align*}
we get
\begin{align}\label{20}
\frac{1}{2}\frac{d}{dt}&\int_{\R^2}|{u^\varepsilon}-u|^2\,dx+\int_{\R^2}|\nabla({u^\varepsilon}-u)|^2\,dx
=-\int_{\R^2}\nabla \big(e^{-\Phi\ast {u^\varepsilon}\ast j^\varepsilon}{u^\varepsilon}-e^{-\Phi\ast u}u \big)\cdot
\nabla({u^\varepsilon}-u)\,dx\nonumber\\
=&-\int_{\R^2}\nabla\big(e^{-\Phi\ast {u^\varepsilon}\ast j^\varepsilon}({u^\varepsilon}-u) \big)\cdot
\nabla({u^\varepsilon}-u)\,dx
-\int_{\R^2}\nabla\big((e^{-\Phi\ast {u^\varepsilon}\ast j^\varepsilon}-e^{-\Phi\ast u\ast j^\varepsilon})u \big)\cdot
\nabla({u^\varepsilon}-u)\,dx\nonumber\\
&-\int_{\R^2}\nabla\big((e^{-\Phi\ast u\ast j^\varepsilon}-e^{-\Phi\ast u})u \big)\cdot
\nabla({u^\varepsilon}-u)\,dx\nonumber\\
=:&J_1+J_2+J_3.
\end{align}
In view of \eqref{26} and Sobolev embedding theorem, the term $J_1$ can be bounded by
\begin{align}\label{21}
J_1&\leq
\int_{\R^2}e^{-\Phi\ast {u^\varepsilon}\ast j^\varepsilon}({u^\varepsilon}-u)\nabla\Phi\ast {u^\varepsilon}\ast j^\varepsilon\cdot\nabla({u^\varepsilon}-u)\,dx\nonumber\\
&\leq \frac{1}{6}\int_{\R^2}|\nabla({u^\varepsilon}-u)|^2\,dx
+C\|\nabla {v^\varepsilon}\|_{L^\infty((0,T)\times\R^2)}^2\int_{\R^2}|{u^\varepsilon}-u|^2\,dx\nonumber\\
&\leq \frac{1}{6}\int_{\R^2}|\nabla({u^\varepsilon}-u)|^2\,dx
+C\int_{\R^2}|{u^\varepsilon}-u|^2\,dx,
\end{align}
where $C$ appeared in this subsection is a positive constant independent of $\varepsilon$.
We split the term $J_2$ into four parts,
\begin{align*}
J_2\leq& \frac{1}{6}\int_{\R^2}|\nabla({u^\varepsilon}-u)|^2\,dx\\
&+C\int_{\R^2}|e^{-\Phi\ast {u^\varepsilon}\ast j^\varepsilon}-e^{-\Phi\ast u\ast j^\varepsilon}|^2|\nabla u|^2\,dx\\
&+ C\int_{\R^2}|e^{-\Phi\ast {u^\varepsilon}\ast j^\varepsilon}\nabla(\Phi\ast {u^\varepsilon}\ast j^\varepsilon-\Phi\ast u\ast j^\varepsilon)|^2u^2\,dx\\
&+C\int_{\R^2}|(e^{-\Phi\ast {u^\varepsilon}\ast j^\varepsilon}-e^{-\Phi\ast u\ast j^\varepsilon})\nabla\Phi\ast u\ast j^\varepsilon|^2u^2\,dx\\
=:&\frac{1}{6}\int_{\R^2}|\nabla({u^\varepsilon}-u)|^2\,dx+J_{21}+J_{22}+J_{23}.
\end{align*}
According to mean value theorem, the term $J_{21}$ can be bounded by
\begin{align*}
J_{21}&\leq C\big\|e^{-\Phi\ast {u^\varepsilon}\ast j^\varepsilon}-e^{-\Phi\ast u\ast j^\varepsilon} \big\|_{L^\infty(\R^2)}^2
\|\nabla u\|_{L^2(\R^2)}^2
\leq C\|\Phi\ast ({u^\varepsilon}-u)\|_{L^\infty(\R^2)}^2\|\nabla u\|_{L^2(\R^2)}^2\\
&\leq C\|\nabla u\|_{L^2(\R^2)}^2\|\Phi\|_{L^2(\R^2)}^2\|{u^\varepsilon}-u\|_{L^2(\R^2)}^2
\leq C\|\nabla u\|_{L^2(\R^2)}^2\|{u^\varepsilon}-u\|_{L^2(\R^2)}^2.
\end{align*}
Taking into account the bound for $\nabla\Phi$ in $L^q(\R^2)$ for any $q\in (1,2)$,
the term $J_{22}$ can be handled by
\begin{align*}
J_{22}\leq C\|\nabla\Phi\ast ({u^\varepsilon}-u)\|_{L^4(\R^2)}^2\|u\|_{L^\infty(0,T;L^4(\R^2)}^2
\leq C\|\nabla\Phi\|_{L^{4/3}(\R^2)}^2\|{u^\varepsilon}-u\|_{L^2(\R^2)}^2
\leq C\|{u^\varepsilon}-u\|_{L^2(\R^2)}^2.
\end{align*}
The convergence \eqref{18} implies, for any $p>2$,
\begin{align*}
\|\nabla v\|_{L^\infty((0,T)\times\R^2)}\leq C\|\nabla v\|_{L^\infty(0,T;W^{1,p}(\R^2)}
\leq C\liminf_{\varepsilon\rightarrow 0}\|\nabla {v^\varepsilon}\|_{L^\infty(0,T;W^{1,p}(\R^2)}\leq C.
\end{align*}
For the term $J_{23}$,
\begin{align*}
J_{23}&\leq C\|\Phi\ast ({u^\varepsilon}-u)\ast j^\varepsilon\|_{L^\infty(\R^2)}^2
\|\nabla v\|_{L^\infty((0,T)\times\R^2)}^2\|u\|_{L^\infty(0,T;L^2(\R^2))}^2
\leq C\|\Phi\ast ({u^\varepsilon}-u)\|_{L^\infty(\R^2)}^2\\
&\leq C\|\Phi\|_{L^2(\R^2)}^2\|{u^\varepsilon}-u\|_{L^2(\R^2)}^2
\leq C\|{u^\varepsilon}-u\|_{L^2(\R^2)}^2.
\end{align*}
It follows that
\begin{align}\label{22}
J_2\leq \frac{1}{6}\int_{\R^2}|\nabla({u^\varepsilon}-u)|^2\,dx+C\int_{\R^2}|{u^\varepsilon}-u|^2\,dx.
\end{align}

With the help of mean value theorem, the term $J_3$ can be divided into four parts
\begin{align*}
J_3\leq &\frac{1}{6}\int_{\R^2}|\nabla({u^\varepsilon}-u)|^2\,dx
+C\int_{\R^2}|\Phi\ast u\ast j^\varepsilon-\Phi\ast u|^2|\nabla u|^2\,dx\\
&+C\int_{\R^2}u^2|\Phi\ast u\ast j^\varepsilon-\Phi\ast u|^2|\nabla\Phi\ast u\ast j^\varepsilon|^2\,dx
+C\int_{\R^2}u^2|\nabla\Phi\ast u\ast j^\varepsilon-\nabla\Phi\ast u|^2\,dx\\
=:&\frac{1}{6}\int_{\R^2}|\nabla({u^\varepsilon}-u)|^2\,dx+J_{31}+J_{32}+J_{33}.
\end{align*}
By Lemma \ref{error}, we obtain that
\begin{align*}
J_{31}\leq &C\|\Phi\ast u\ast j^\varepsilon-\Phi\ast u\|_{L^\infty((0,T)\times\R^2)}^2\|\nabla u\|_{L^2(\R^2)}^2
\leq C\varepsilon^2\|\nabla\Phi\ast u\|_{L^\infty((0,T)\times\R^2)}^2\|\nabla u\|_{L^2(\R^2)}^2\\
\leq & C\varepsilon^2\|\nabla v\|_{L^\infty((0,T)\times\R^2)}^2\|\nabla u\|_{L^2(\R^2)}^2\leq C\|\nabla u\|_{L^2(\R^2)}^2\varepsilon^2.
\end{align*}
Similarly,
\begin{align*}
J_{32}\leq &C\|v\ast j^\varepsilon -v\|_{L^\infty((0,T)\times\R^2))}^2\|u\|_{L^\infty(0,T;L^4(\R^2))}^2\|\nabla\Phi\ast u\ast j^\varepsilon\|_{L^\infty(0,T;L^4(\R^2))}^2\\
\leq &C\varepsilon^2\|\nabla v\|_{L^\infty((0,T)\times\R^2)}^2\|\nabla v\|_{L^\infty(0,T;L^4(\R^2))}^2\leq C\varepsilon^2.
\end{align*}
For the term $J_{33}$,
\begin{align*}
J_{33}\leq & C\|\nabla\Phi\ast u\ast j^\varepsilon-\nabla\Phi\ast u\|_{L^\infty(0,T;L^4(\R^2))}^2\|u\|_{L^\infty(0,T;L^4(\R^2))}^2\\
\leq & C\varepsilon^2\|D^2\Phi\ast u\|_{L^\infty(0,T;L^4(\R^2))}^2\leq C\varepsilon^2\|D^2v\|_{L^\infty(0,T;L^4(\R^2))}^2\leq C\varepsilon^2.
\end{align*}
Therefore, we have
\begin{align}\label{23}
J_3\leq \frac{1}{6}\int_{\R^2}|\nabla({u^\varepsilon}-u)|^2\,dx+C\Big(1+\int_{\R^2}|\nabla u|^2\,dx\Big)\varepsilon^2.
\end{align}

Plugging \eqref{21}-\eqref{23} into \eqref{20}, we infer that
\begin{align*}
\frac{d}{dt}\int_{\R^2}|{u^\varepsilon}-u|^2\,dx+\int_{\R^2}|\nabla({u^\varepsilon}-u)|^2\,dx
\leq C\varepsilon^2\Big(1+\int_{\R^2}|\nabla u|^2\,dx\Big)+C\int_{\R^2}|{u^\varepsilon}-u|^2\,dx.
\end{align*}
Together with the fact that $u^\varepsilon$ and $u$ share the same initial data and $\|\nabla u\|_{L^2(0,T;L^2(\R^2))}$ is bounded, Gronwall's inequality implies that
\begin{align*}
&\|{u^\varepsilon}-u\|_{L^\infty(0,T;L^2(\R^2))}\leq C\varepsilon,\\
&\|\nabla({u^\varepsilon}-u)\|_{L^2((0,T)\times\R^2)}\leq C\varepsilon.
\end{align*}
Since the second moment is bounded, showed in Proposition \ref{energy}, we have that
\begin{align*}
\int_{\mathbb{R}^2}|u^\varepsilon-u|\,dx&\leq \int_{B_{R}(0)}|u^\varepsilon-u|\,dx
+\int_{B_{R}(0)^c}\frac{|x|^2}{R^2}|u^\varepsilon-u|\,dx\\
&\leq CR\Big(\int_{\mathbb{R}^2}|u^\varepsilon-u|^2\,dx\Big)^{1/2}
+\frac{1}{R^2}\int_{\mathbb{R}^2}|x|^2|u^\varepsilon-u|\,dx
\leq C R\varepsilon+\frac{C}{R^2}.
\end{align*}
Taking $R=\sqrt{\varepsilon}^{-1}$, we get
\begin{align*}
\|u^\varepsilon-u\|_{L^\infty(0,T;L^1(\mathbb{R}^2))}\leq C\sqrt{\varepsilon}.
\end{align*}
Thus we have finished the proof of theorem \ref{theorempde}.

\section{Mean-field limit}
\label{The_proof_of_Theorem_3}
In this section, we focus on the mean-field limit part and give the proof of Theorem \ref{mean-field} and Theorem \ref{relative}. First of all, based on the results given in Theorem \ref{theorempde}, the proof of solvability of McKean-Vlasov problems \eqref{generalized_intermediate_particle_model} and \eqref{generalized_particle_model} can be done exactly the same as in  \cite{chen2021rigorous}. Namely, we can easily obtain that \eqref{generalized_intermediate_particle_model} has a unique square integrable solution $\bar{X}_i^\varepsilon$ with $u^\varepsilon$ as the density of its distribution, and \eqref{generalized_particle_model} has a unique square integrable solution $\hat{X}_i$ with $u$ as the density of its distribution.

The propagation of chaos result is given in this section both in the weak sense by studying the difference of trajectories in expectation, and the strong $L^1$ sense under additional condition on the PDE solution by applying the relative entropy method. Notice that the application of the Burkholder-Davis-Gundy inequality doesn't allow one to estimate the maximum norm of the trajectories, as has been often used in the models with drift interaction. The current proof follows the idea from \cite{chen2021rigorous}. Due to its different interaction from those in \cite{chen2021rigorous}, we show a different convergence rate. Furthermore, the estimate for difference of trajectories in expectation provides us the idea to close the relative entropy estimate.

\subsection{Propagation of chaos in the weak sense}
The proof of Theorem \ref{mean-field} is done in two steps.
First, we give the uniform in $\varepsilon$ estimate for the difference $X^\varepsilon_{N,i} - \bar{X}^\varepsilon_i$.
Then, the estimate for the difference $\bar{X}^\varepsilon_i-\hat{X}_i$ is obtained with the help of the error estimates for $u^\varepsilon-u$ given in Theorem \ref{theorempde}.
\begin{proposition} \label{mean-field_est_1}
Let $X^\varepsilon_{N,i}$ and $\bar{X}^\varepsilon_{i}$ be square integrable solutions to \eqref{generalized_regularized_particle_model} and \eqref{generalized_intermediate_particle_model}, respectively.
Under the assumptions of Theorem \ref{theorempde}, for any $\beta\in (0,1)$ and $N$ large enough,
there exists a constant $C> 0$ independent of $N$ such that
\begin{align}\label{r1}
\max_{i=1,\dots,N} \ept \Big( \sup_{t\in [0,T] } | X^\varepsilon_{N,i} - \bar{X}^\varepsilon_i |^2(t) \Big)
\leq CN^{-\beta},
\end{align}
where the cut-off parameter satisfies
\begin{align*}
\varepsilon=(\lambda\log N)^{-\frac{1}{4}}{\mbox{ with }}  -1+\delta+\bar{C}T\lambda\leq -\beta.
\end{align*}
Here $\delta>0$ is an arbitrary small constant and $\bar C$ is a positive constant independent of $N$.
\end{proposition}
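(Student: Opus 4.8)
The plan is to exploit that \eqref{generalized_regularized_particle_model} and \eqref{generalized_intermediate_particle_model} are driven by the \emph{same} Brownian motions $(B_i)_i$ and issued from the same initial data $\zeta_i$, so that $X^\varepsilon_{N,i}-\bar X^\varepsilon_i$ is a continuous, drift-free martingale:
\[
X^\varepsilon_{N,i}(t)-\bar X^\varepsilon_i(t)=\int_0^t\big(\sigma^\varepsilon_{N,i}(s)-\bar\sigma^\varepsilon_i(s)\big)\,dB_i(s),
\]
with $\sigma^\varepsilon_{N,i}(s)=\big(2\exp(-\tfrac1N\sum_{j}\Phi^\varepsilon(X^\varepsilon_{N,i}-X^\varepsilon_{N,j}))+2\big)^{1/2}$ and $\bar\sigma^\varepsilon_i(s)=\big(2\exp(-\Phi^\varepsilon\ast u^\varepsilon(s,\bar X^\varepsilon_i))+2\big)^{1/2}$. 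Since $\Phi^\varepsilon=\Phi\ast j^\varepsilon\ge0$ (the Yukawa potential is positive and $j^\varepsilon\ge0$), both exponents are nonnegative and $s\mapsto(2e^{-s}+2)^{1/2}$ is Lipschitz on $[0,\infty)$; hence $|\sigma^\varepsilon_{N,i}(s)-\bar\sigma^\varepsilon_i(s)|\le C|\tfrac1N\sum_{j}\Phi^\varepsilon(X^\varepsilon_{N,i}-X^\varepsilon_{N,j})-\Phi^\varepsilon\ast u^\varepsilon(s,\bar X^\varepsilon_i)|$. Applying the Burkholder--Davis--Gundy inequality to this martingale reduces the whole task to estimating $\ept\int_0^\tau|\tfrac1N\sum_j\Phi^\varepsilon(X^\varepsilon_{N,i}-X^\varepsilon_{N,j})-\Phi^\varepsilon\ast u^\varepsilon(s,\bar X^\varepsilon_i)|^2\,ds$. (Square integrability of all trajectories is automatic, since each diffusion coefficient takes values in $[\sqrt2,2]$.)

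I would then split the quantity inside as $A_1+A_2+A_3$, where $A_1=\tfrac1N\sum_{j=1}^N[\Phi^\varepsilon(X^\varepsilon_{N,i}-X^\varepsilon_{N,j})-\Phi^\varepsilon(\bar X^\varepsilon_i-\bar X^\varepsilon_j)]$ is the consistency/Lipschitz term, $A_2=\tfrac1N\sum_{j\ne i}[\Phi^\varepsilon(\bar X^\varepsilon_i-\bar X^\varepsilon_j)-\Phi^\varepsilon\ast u^\varepsilon(s,\bar X^\varepsilon_i)]$ is the law-of-large-numbers term, and $A_3=\tfrac1N[\Phi^\varepsilon(0)-\Phi^\varepsilon\ast u^\varepsilon(s,\bar X^\varepsilon_i)]$ is the self-interaction term. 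For $A_1$ I use $\|\nabla\Phi^\varepsilon\|_{L^\infty}=\|\Phi\ast\nabla j^\varepsilon\|_{L^\infty}\le\|\Phi\|_{L^2(\R^2)}\|\nabla j^\varepsilon\|_{L^2(\R^2)}\le C\varepsilon^{-2}$ (the Yukawa potential lies in $L^2(\R^2)$), which gives $|A_1|\le C\varepsilon^{-2}(|X^\varepsilon_{N,i}-\bar X^\varepsilon_i|+\tfrac1N\sum_j|X^\varepsilon_{N,j}-\bar X^\varepsilon_j|)$. For $A_2$, conditioning on $\bar X^\varepsilon_i$ makes the summands i.i.d.\ with conditional mean zero, because $u^\varepsilon(s,\cdot)$ is the common law of the independent copies $\bar X^\varepsilon_j$, $j\ne i$; estimating the conditional variance and using $\Phi^\varepsilon(x-y)^2\le\|\Phi^\varepsilon\|_{L^\infty}\Phi^\varepsilon(x-y)$ together with $\|\Phi^\varepsilon\|_{L^\infty}\le C\varepsilon^{-1}$ and the uniform bound $\|\Phi^\varepsilon\ast u^\varepsilon\|_{L^\infty}=\|v^\varepsilon\|_{L^\infty}\le C$ (from the $W^{2,p}$ estimate of Proposition \ref{pLp} and Sobolev embedding) yields $\ept|A_2|^2\le C\varepsilon^{-1}N^{-1}$. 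The self term satisfies $|A_3|\le C\varepsilon^{-1}N^{-1}$ and is negligible after squaring.

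Inserting these bounds into the Burkholder--Davis--Gundy estimate and writing $g(\tau)=\max_i\ept(\sup_{t\le\tau}|X^\varepsilon_{N,i}-\bar X^\varepsilon_i|^2(t))$, the exchangeability of the particles gives $\tfrac1N\sum_j\ept\sup_{t\le\tau}|X^\varepsilon_{N,j}-\bar X^\varepsilon_j|^2(t)\le g(\tau)$, so that
\[
g(\tau)\le \frac{\bar C}{\varepsilon^4}\int_0^\tau g(s)\,ds+\frac{C\tau}{\varepsilon N}.
\]
Grönwall's inequality then gives $g(T)\le CT\varepsilon^{-1}N^{-1}e^{\bar C T/\varepsilon^4}$; substituting $\varepsilon=(\lambda\log N)^{-1/4}$, so that $\varepsilon^{-4}=\lambda\log N$ and $\varepsilon^{-1}=(\lambda\log N)^{1/4}\le N^{\delta}$ for $N$ large, produces $g(T)\le CT\,N^{-1+\delta+\bar C T\lambda}\le CN^{-\beta}$ under the stated condition $-1+\delta+\bar C T\lambda\le-\beta$. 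The main obstacle is the competition between the two contributions: the Lipschitz term forces a Grönwall constant of size $\varepsilon^{-4}$, hence an exponential factor $e^{\bar C T/\varepsilon^4}=N^{\bar C T\lambda}$ that must stay below $N$, while the law-of-large-numbers term requires $\varepsilon^{-1}/N\to0$; the logarithmic cut-off scaling $\varepsilon=(\lambda\log N)^{-1/4}$ with $\lambda$ small is precisely what reconciles the two. Note also that, since the nonlinearity sits in the diffusion coefficient and there is no drift to control pathwise, the Burkholder--Davis--Gundy inequality is unavoidable and rules out the stronger maximal-in-trajectory bounds available for drift-type interactions.
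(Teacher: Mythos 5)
Your proposal is correct and follows essentially the same route as the paper: reduce via the Burkholder--Davis--Gundy inequality and the Lipschitz continuity of $s\mapsto(2e^{-s}+2)^{1/2}$ to the difference of empirical and mean-field interactions, split it into a Lipschitz/consistency part controlled by $\|\nabla\Phi^\varepsilon\|_{L^\infty}\leq C\varepsilon^{-2}$ and a law-of-large-numbers part of size $O((N\varepsilon^{a})^{-1})$, and close with Gr\"onwall, where the factor $e^{\bar CT/\varepsilon^4}$ is tamed by the logarithmic cut-off $\varepsilon=(\lambda\log N)^{-1/4}$. The only deviations are cosmetic: you merge the paper's two Lipschitz terms into one and isolate the $j=i$ self-interaction, and your conditional-variance bound $C\varepsilon^{-1}N^{-1}$ (using $\Phi^\varepsilon(x)^2\leq\|\Phi^\varepsilon\|_{L^\infty}\Phi^\varepsilon(x)$ and $\Phi^\varepsilon\ast u^\varepsilon=v^\varepsilon$ bounded) is slightly sharper than the paper's $C\varepsilon^{-4}N^{-1}$, which is immaterial since the Gr\"onwall exponential dominates the final rate.
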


\begin{proof}
For the difference $X^\varepsilon_{N,i} - \bar{X}^\varepsilon_i$, it follows from the Burkholder-Davis-Gundy inequality and mean value theorem that
\begin{align}\label{A4r}
&\ept \Big(\sup_{t\in [0,T]}|X^\varepsilon_{N,i}-\bar{X}^\varepsilon_{i}|^2 \Big)\nonumber\\
\leq&C \ept\Big(\int_0^T \Big|\Big(2\exp\Big(-\frac{1}{N}\sum_{\substack{j=1 }}^N\Phi^\varepsilon
(X^\varepsilon_{N,i}-X^\varepsilon_{N,j}) \Big)+2\Big)^{1/2}
-\big(2\exp(-\Phi^\varepsilon\ast u^\varepsilon(t,\bar{X}^\varepsilon_i))+2 \big)^{1/2} \Big|^2\,dt \Big)\nonumber\\
\leq& C \ept  \Big(\int_0^T \Big| \frac{1}{N} \sum_{\substack{j=1 }}^N \Phi^\varepsilon (X^\varepsilon_{N,i} - X^\varepsilon_{N,j}) -  \Phi^\varepsilon \ast u^\varepsilon(t,\bar{X}^\varepsilon_i)  \Big|^2 \,dt  \Big)\nonumber\\
\leq& C\ept \Big(\int_0^T (A_1^2+A_2^2+A_3^2)(t) \,dt\Big),
\end{align}
where
\begin{align*}
A_1(t)&:=  \frac{1}{N} \sum_{\substack{j=1}}^N \Phi^\varepsilon (X^\varepsilon_{N,i} - X^\varepsilon_{N,j})  -   \frac{1}{N} \sum_{\substack{j=1}}^N \Phi^\varepsilon (\bar{X}^\varepsilon_{i} - X^\varepsilon_{N,j}), \\
A_2(t)&:=   \frac{1}{N} \sum_{\substack{j=1}}^N \Phi^\varepsilon (\bar{X}^\varepsilon_{i} - X^\varepsilon_{N,j}) -  \frac{1}{N} \sum_{\substack{j=1}}^N \Phi^\varepsilon (\bar{X}^\varepsilon_{i} - \bar{X}^\varepsilon_{j}), \\
A_3(t)&:= \frac{1}{N} \sum_{\substack{j=1}}^N \Phi^\varepsilon (\bar{X}^\varepsilon_{i} - \bar{X}^\varepsilon_{j}) - \Phi^\varepsilon \ast u^\varepsilon(t,\bar{X}^\varepsilon_i).
\end{align*}
It is time to deal with the first term
\begin{align}\label{A1r}
\ept \Big( \int_0^T  A_1^2(t)\, dt \Big)
\leq&\ept\Big(\int_0^T \Big|\frac{1}{N} \sum_{\substack{j=1}}^N \big| \Phi^\varepsilon (X^\varepsilon_{N,i} - X^\varepsilon_{N,j}) -  \Phi^\varepsilon (\bar{X}^\varepsilon_{i} - X^\varepsilon_{N,j}) \big|\Big|^2 \,dt\Big)\nonumber\\
\leq & \|\nabla\Phi^\varepsilon\|_{L^\infty(\mathbb{R}^2)}^2 \ept \Big( \int_0^T  \left| X^\varepsilon_{N,i} - \bar{X}^\varepsilon_{i} \right|^2 \,dt \Big) \nonumber\\
\leq &  \frac{C}{\varepsilon^4} \int_0^T \ept \Big(  \sup_{s \in [0,t]} \big| X^\varepsilon_{N,i} - \bar{X}^\varepsilon_{i} \big|^2(s) \Big)\, dt,
\end{align}
where we have used
\begin{align*}
\|\nabla\Phi^\varepsilon\|_{L^\infty(\mathbb{R}^2)}\leq \|\nabla\Phi\|_{L^1(\mathbb{R}^2)}\|j^\varepsilon\|_{L^\infty(\mathbb{R}^2)}\leq \frac{C}{\varepsilon^2}.
\end{align*}
The second term can be calculated as
\begin{align}\label{A2r}
\ept \Big( \int_0^T  A_2^2(t) \,dt \Big)
\leq&\ept\Big(\int_0^T \Big|\frac{1}{N} \sum_{\substack{j=1}}^N \big|\Phi^\varepsilon (\bar{X}^\varepsilon_{i} - X^\varepsilon_{N,j})
-  \Phi^\varepsilon (\bar{X}^\varepsilon_{i} - \bar{X}^\varepsilon_{j})\big|\Big|^2 \,dt\Big)\nonumber\\
\leq & \|\nabla\Phi^\varepsilon\|_{L^\infty(\mathbb{R}^2)}^2 \ept
\Big( \int_0^T \frac{1}{N} \sum_{\substack{j=1}}^N \big| X^\varepsilon_{N,j} - \bar{X}^\varepsilon_{j} \big|^2\, dt \Big) \nonumber\\
\leq &   \frac{C}{N\varepsilon^4} \int_0^T \sum_{\substack{j=1}}^N\ept \Big(  \sup_{s \in [0,t]} \big| X^\varepsilon_{N,j}
 - \bar{X}^\varepsilon_{j} \big|^2(s) \Big) \,dt.
\end{align}
To deal with the third term, we introduce
$$ Z_{i,j}:=\Phi^\varepsilon (\bar{X}^\varepsilon_{i} - \bar{X}^\varepsilon_{j}) - \Phi^\varepsilon \ast u^\varepsilon(t,\bar{X}^\varepsilon_i), $$
which implies
\begin{align*}
|A_3|^2 = \Big|\frac{1}{N}\sum_{j=1}^N Z_{i,j}\Big|^2=\frac{1}{N^2} \sum_{j,l=1}^N Z_{i,j} Z_{i,l}.
\end{align*}
We start with the case $j \neq l$, $i \neq j$ and $i \neq l$.
\begin{align*}
\ept (Z_{i,j}Z_{i,l})
=&\ept \big((\Phi^\varepsilon(\bar{X}^\varepsilon_i-\bar{X}^\varepsilon_j)-\Phi^\varepsilon\ast u^\varepsilon(t,\bar{X}^\varepsilon_i) )
(\Phi^\varepsilon(\bar{X}^\varepsilon_i-\bar{X}^\varepsilon_l)-\Phi^\varepsilon\ast u^\varepsilon(t,\bar{X}^\varepsilon_i) ) \big)\\
=&\int_{\mathbb{R}^2}\int_{\mathbb{R}^2}\int_{\mathbb{R}^2} (\Phi^\varepsilon(x-y)-\Phi^\varepsilon\ast u^\varepsilon(t,x))
(\Phi^\varepsilon(x-z)-\Phi^\varepsilon\ast u^\varepsilon(t,x))u^\varepsilon(t,x)u^\varepsilon(t,y)u^\varepsilon(t,z)\,dxdydz=0.
\end{align*}
For $j\neq l$ and $i=j$, we have
\begin{align*}
\ept(Z_{i,i}Z_{i,l})
=\ept \big((\Phi^\varepsilon(0)-\Phi^\varepsilon\ast u^\varepsilon(t,\bar{X}^\varepsilon_i))
(\Phi^\varepsilon(\bar{X}^\varepsilon_i-\bar{X}^\varepsilon_l)-\Phi^\varepsilon\ast u^\varepsilon(t,\bar{X}^\varepsilon_i)) \big)=0.
\end{align*}
Similarly, for $j\neq l$ and $i=l$, we have
\begin{align*}
\ept(Z_{i,j}Z_{i,i})=0.
\end{align*}
Together with the fact that
\begin{align*}
|Z_{i,j}|^2=|\Phi^\varepsilon (\bar{X}^\varepsilon_{i} - \bar{X}^\varepsilon_{j}) - \Phi^\varepsilon \ast u^\varepsilon(t,\bar{X}^\varepsilon_i)|^2
\leq \frac{C}{\varepsilon^4},
\end{align*}
we obtain that
\begin{align}\label{A3r}
\ept \Big(\int_0^T A_3^2(t)\,dt \Big)=\frac{1}{N^2}\ept \Big(\int_0^T \sum_{j,l=1}^N Z_{i,j}Z_{i,l}\,dt \Big)
\leq \frac{1}{N^2}\ept \Big(\int_0^T \sum_{j=1}^N |Z_{i,j}|^2\,dt \Big)
+\frac{1}{N^2}\ept \Big(\int_0^T |Z_{i,i}|^2\,dt \Big)\leq \frac{C}{N\varepsilon^4}.
\end{align}
Inserting \eqref{A1r}-\eqref{A3r} into \eqref{A4r}, we derive that
\begin{align*}
\max_{i=1,\dots,N} \ept \Big( \sup_{t\in [0,T] } | X^\varepsilon_{N,i} - \bar{X}^\varepsilon_i |^2(t) \Big)
\leq  \frac{C}{N\varepsilon^{4}}  + \frac{\bar C}{\varepsilon^4} \int_0^T \max_{i=1,\dots,N} \ept \Big(  \sup_{s \in [0,t]} | X^\varepsilon_{N,i} - \bar{X}^\varepsilon_{i} |^2(s) \Big) dt.
\end{align*}
By Gronwall's inequality, it holds
$$ \max_{i=1,\dots,N} \ept \Big( \sup_{t\in [0,T] } \left| X^\varepsilon_{N,i} - \bar{X}^\varepsilon_i \right|^2(t) \Big)
\leq  \frac{C}{N\varepsilon^{4}} \exp \Big({ \frac{\bar{C}T}{\varepsilon^4}  } \Big). $$
Taking $\varepsilon=(\lambda\log N)^{-\frac{1}{4}}$ with $\lambda>0$, we obtain that for big enough $N$
\begin{align*}
\frac{1}{N\varepsilon^4}\exp \Big({ \frac{\bar{C}T}{\varepsilon^4}  } \Big)=\lambda \log N N^{-1+\bar{C}T\lambda}
\leq \lambda N^{-1+\delta+\bar{C}T\lambda},
\end{align*}
where $0<\delta\ll 1$ is  arbitrary. Therefore, it is enough to choose $\lambda$ such that for any fixed $\beta\in (0,1)$
\begin{align*}
 -1+\delta+\bar{C}T\lambda\leq -\beta.
\end{align*}
This finishes the proof of Proposition \ref{mean-field_est_1}.
\end{proof}

Based on the error estimates between $u^\varepsilon$ and $u$, we infer the estimate for $\bar{X}^\varepsilon_i - \hat{X}_i$ as follows.
\begin{proposition} \label{mean-field_est_2}
Let $\bar{X}^\varepsilon_{i}$ and $\hat{X}_i$ be solutions to \eqref{generalized_intermediate_particle_model} and \eqref{generalized_particle_model}, respectively.
Under the assumptions of Theorem \ref{theorempde}, there exists $C >0$ independent of $\varepsilon$ such that for any small $\varepsilon >0 $
\begin{align*}
\max_{i=1,\dots,N} \ept \Big( \sup_{t\in [0,T] } | \bar{X}^\varepsilon_i - \hat{X}_i |^2(t) \Big) \leq C \varepsilon^2.
\end{align*}
\end{proposition}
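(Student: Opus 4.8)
The plan is to exploit that $\bar{X}^\varepsilon_i$ and $\hat{X}_i$ are driven by the same Brownian motion $B_i$ and issued from the same initial datum $\zeta_i$, so that their difference is a pure It\^o integral with no drift term. Writing $\sigma^\varepsilon(s,x):=(2\exp(-\Phi^\varepsilon\ast u^\varepsilon(s,x))+2)^{1/2}$ and $\sigma(s,x):=(2\exp(-\Phi\ast u(s,x))+2)^{1/2}$, one has
\[
\bar{X}^\varepsilon_i(t)-\hat{X}_i(t)=\int_0^t\big(\sigma^\varepsilon(s,\bar{X}^\varepsilon_i(s))-\sigma(s,\hat{X}_i(s))\big)\,dB_i(s),
\]
and I would first apply the Burkholder--Davis--Gundy inequality to get, for every $t\in[0,T]$,
\[
\ept\Big(\sup_{s\in[0,t]}|\bar{X}^\varepsilon_i-\hat{X}_i|^2(s)\Big)\le C\,\ept\int_0^t\big|\sigma^\varepsilon(s,\bar{X}^\varepsilon_i)-\sigma(s,\hat{X}_i)\big|^2\,ds,
\]
then split the integrand via the triangle inequality as $|\sigma^\varepsilon(s,\bar{X}^\varepsilon_i)-\sigma^\varepsilon(s,\hat{X}_i)|$ plus $|\sigma^\varepsilon(s,\hat{X}_i)-\sigma(s,\hat{X}_i)|$.

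For the first piece I would use a uniform-in-$\varepsilon$ spatial Lipschitz bound for $\sigma^\varepsilon$. Since $\Phi^\varepsilon\ast u^\varepsilon=v^\varepsilon$ and the function $g(r):=(2e^{-r}+2)^{1/2}$ satisfies $|g'(r)|\le 1/\sqrt2$ for $r\ge0$ while $v^\varepsilon\ge0$, we get $|\nabla_x\sigma^\varepsilon(s,x)|\le\frac{1}{\sqrt2}|\nabla v^\varepsilon(s,x)|$; the $L^p$ estimate \eqref{ad2} (equivalently Proposition \ref{pLp}), combined with the second equation of \eqref{kellersegelmedium} and elliptic regularity, gives $\|v^\varepsilon\|_{L^\infty(0,T;W^{2,p}(\R^2))}\le C$ uniformly in $\varepsilon$, and choosing $p>2$ together with $W^{1,p}(\R^2)\hookrightarrow L^\infty(\R^2)$ yields $\|\nabla v^\varepsilon\|_{L^\infty((0,T)\times\R^2)}\le C$, hence $|\sigma^\varepsilon(s,\bar{X}^\varepsilon_i)-\sigma^\varepsilon(s,\hat{X}_i)|\le C|\bar{X}^\varepsilon_i-\hat{X}_i|$. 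For the second piece, again by $|g(a)-g(b)|\le\frac{1}{\sqrt2}|a-b|$ it suffices to control $\|v^\varepsilon-v\|_{L^\infty((0,T)\times\R^2)}$; writing $v^\varepsilon-v=\Phi\ast j^\varepsilon\ast(u^\varepsilon-u)+(\Phi\ast j^\varepsilon-\Phi)\ast u$ and using Young's inequality, Lemma \ref{error}, $\Phi\in W^{1,q}(\R^2)$ for $q\in(1,2)$, and the regularity of $u$ from Theorem \ref{theorempde}, one gets
\[
\|v^\varepsilon-v\|_{L^\infty(\R^2)}\le\|\Phi\|_{L^2(\R^2)}\|u^\varepsilon-u\|_{L^2(\R^2)}+\varepsilon\|\nabla\Phi\|_{L^q(\R^2)}\|u\|_{L^{q'}(\R^2)}\le C\varepsilon,
\]
where the final bound invokes the error estimate \eqref{a1}.

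Collecting the two pieces, $\ept\int_0^t|\sigma^\varepsilon(s,\bar{X}^\varepsilon_i)-\sigma(s,\hat{X}_i)|^2\,ds\le C\int_0^t\ept(\sup_{r\in[0,s]}|\bar{X}^\varepsilon_i-\hat{X}_i|^2(r))\,ds+C\varepsilon^2t$, so setting $\psi(t):=\max_{i=1,\dots,N}\ept(\sup_{s\in[0,t]}|\bar{X}^\varepsilon_i-\hat{X}_i|^2(s))$ we obtain $\psi(t)\le C\varepsilon^2+C\int_0^t\psi(s)\,ds$, and Gr\"onwall's inequality gives $\psi(T)\le C\varepsilon^2 e^{CT}$ with $C$ independent of $\varepsilon$ and $N$. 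The only genuinely delicate point I anticipate is securing the $\varepsilon$-uniform spatial Lipschitz constant of $\sigma^\varepsilon$, i.e. the uniform $L^\infty$ bound on $\nabla v^\varepsilon$ extracted from the $L^p$ estimates; once that is in hand, the remainder is a standard Burkholder--Davis--Gundy-plus-Gr\"onwall argument feeding on the error bound \eqref{a1} already established in Theorem \ref{theorempde}.
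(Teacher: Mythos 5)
Your proof is correct and follows the same overall strategy as the paper: same driving Brownian motion and initial datum, Burkholder--Davis--Gundy, the Lipschitz bound for $r\mapsto(2e^{-r}+2)^{1/2}$ on $[0,\infty)$ (the paper phrases this as the mean-value theorem), the error estimate \eqref{a1}, and Gr\"onwall. The difference is in the decomposition and in how the PDE-error piece is controlled. The paper splits the exponent difference into three terms $A_4+A_5+A_6$: the trajectory term $A_5$ is handled with the uniform bound $\|\nabla\Phi^\varepsilon\ast u\|_{L^\infty}\leq C$, the term $A_4$ is treated by using that $\bar X^\varepsilon_i$ has density $u^\varepsilon$, so the expectation becomes $\int(\Phi^\varepsilon\ast(u^\varepsilon-u))^2u^\varepsilon\,dx$, bounded via $L^{4/3}$--$L^4$ duality and \eqref{a1}, and the mollification error $A_6$ is bounded with Lemma \ref{error} in $L^{4/3}$. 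You instead use a two-term splitting: the trajectory term is handled with the spatial Lipschitz constant coming from $\|\nabla v^\varepsilon\|_{L^\infty((0,T)\times\R^2)}\leq C$, which is indeed available uniformly in $\varepsilon$ from \eqref{26} and the embedding $W^{1,p}(\R^2)\hookrightarrow L^\infty(\R^2)$ for $p>2$ (the paper uses exactly this bound when estimating $J_1$), and the whole PDE error is absorbed into the sup-norm estimate $\|v^\varepsilon-v\|_{L^\infty}\leq\|\Phi\|_{L^2}\|u^\varepsilon-u\|_{L^2}+\varepsilon\|\nabla\Phi\|_{L^q}\|u\|_{L^{q'}}\leq C\varepsilon$, legitimate because the Yukawa potential lies in $L^2(\R^2)$ and $\nabla\Phi\in L^q(\R^2)$ for $q\in(1,2)$, while $u\in L^\infty(0,T;L^{q'}(\R^2))$. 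Both routes give the rate $C\varepsilon^2$; yours is slightly more direct in that it never invokes the law of $\bar X^\varepsilon_i$ in the estimate (only pointwise sup-norm bounds), whereas the paper's weighting against $u^\varepsilon$ would be the more robust device if the kernel were not globally square integrable.
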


\begin{proof}
In view of the Burkholder-Davis-Gundy inequality and  mean value theorem
\begin{align}\label{A5r}
\ept \Big( \sup_{t\in [0,T] } | \bar{X}^\varepsilon_i - \hat{X}_i |^2(t) \Big)
\leq & C \ept \Big( \int_0^T \Big| \sqrt{2 \exp \big( -\Phi^\varepsilon \ast u^\varepsilon(t,\bar{X}^\varepsilon_i) \big)+ 2  } - \sqrt{2 \exp \big(- \Phi \ast u(t,\hat{X}_i) \big)+2  } \Big|^2 d t \Big) \nonumber\\
\leq & C \ept \Big( \int_0^T \big| \Phi^\varepsilon \ast u^\varepsilon(t,\bar{X}^\varepsilon_i) -   \Phi \ast u(t,\hat{X}_i) \big|^2 d t \Big)\nonumber\\
\leq &C \ept \Big(\int_0^T (A_4^2+A_5^2+A_6^2)(t)\,dt \Big),
\end{align}
where
\begin{align*}
A_4 := &   \Phi^\varepsilon \ast u^\varepsilon(t,\bar{X}^\varepsilon_i)-\Phi^\varepsilon \ast u(t,\bar{X}^\varepsilon_i)  , \\
A_5 := & \Phi^\varepsilon \ast u(t,\bar{X}^\varepsilon_i)- \Phi^\varepsilon \ast u(t,\hat{X}_i)  , \\
A_6 := & \Phi^\varepsilon \ast u(t,\hat{X}_i)- \Phi \ast u(t,\hat{X}_i).
\end{align*}
By \eqref{a1}, \eqref{Lp} and the bound for $\Phi$ in $L^{4/3}(\mathbb{R}^2)$, we deduce that
\begin{align}\label{A6r}
\ept \Big(\int_0^T A_4^2(t)\,dt \Big)
& = \int_0^T \int_{\R^2} \left( \Phi^\varepsilon \ast u^\varepsilon(t,x) -  \Phi^\varepsilon \ast u(t,x)  \right)^2 u^\varepsilon(t,x) \,dx d t \nonumber\\
& \leq \|u^\varepsilon\|_{L^1(0,T;L^2(\mathbb{R}^2))}\|\Phi^\varepsilon\ast (u^\varepsilon-u)\|_{L^\infty(0,T;L^4(\mathbb{R}^2))}^2\nonumber\\
&\leq C\|\Phi^\varepsilon\|_{L^{4/3}(\mathbb{R}^2)}^2\|u^\varepsilon-u\|_{L^\infty(0,T;L^2(\mathbb{R}^2))}^2\leq C\varepsilon^2.
\end{align}
The second term can be calculated as
\begin{align}\label{A7r}
\ept \Big(\int_0^T A_5^2(t)\,dt \Big)
\leq  & C\|\nabla\Phi^\varepsilon\ast u\|_{L^\infty((0,T)\times\mathbb{R}^2)}^2
 \ept \Big( \int_0^T |\bar{X}^\varepsilon_i- \hat{X}_i  |^2 \,d t \Big)
\leq C\ept \Big( \int_0^T | \bar{X}^\varepsilon_i -\hat{X}_i |^2 \,d t \Big),
\end{align}
where we have used
\begin{align*}
\|\nabla\Phi^\varepsilon\ast u\|_{L^\infty((0,T)\times\mathbb{R}^2)}
\leq \|\nabla\Phi\|_{L^{4/3}(\mathbb{R}^2)}\|j^\varepsilon\|_{L^1(\mathbb{R}^2)}\|u\|_{L^\infty(0,T;L^4(\mathbb{R}^2))}\leq C.
\end{align*}
Applying Lemma \ref{error}, it holds
\begin{align}\label{A8r}
\ept \Big(\int_0^T A_6^2(t)\,dt \Big)
 &= \int_0^T \int_{\R^2} \left( \Phi^\varepsilon \ast u(t,x) -  \Phi\ast u(t,x) \right)^2 u \, dx dt \nonumber\\ 
 &\leq \|u\|_{L^1(0,T;L^2(\mathbb{R}^2))}\|(\Phi^\varepsilon-\Phi)\ast u\|_{L^\infty(0,T;L^4(\mathbb{R}^2))}^2\nonumber\\
 &\leq C\|\Phi^\varepsilon-\Phi\|_{L^{4/3}(\mathbb{R}^2)}^2\|u\|_{L^\infty(0,T;L^2(\mathbb{R}^2))}^2\nonumber\\
 &\leq C\varepsilon^2\|\nabla\Phi\|_{L^{4/3}(\mathbb{R}^2)}^2\leq C\varepsilon^2.
\end{align}
Inserting \eqref{A6r}-\eqref{A8r} into \eqref{A5r}, we infer that
 $$\max_{i=1,\dots,N}\ept \Big( \sup_{t\in [0,T] }| \bar{X}^\varepsilon_i - \hat{X}_i |^2(t) \Big)
 \leq C \varepsilon^2 + C \int_0^T  \max_{i=1,\dots,N}\ept \Big( \sup_{s\in [0,t] } |\bar{X}^\varepsilon_i - \hat{X}_i|^2(s)  \Big) dt .$$
Combining Gronwall's inequality, we obtain that
$$ \max_{i=1,\dots,N}\ept \Big( \sup_{t\in [0,T] } | \bar{X}^\varepsilon_i - \hat{X}_i |^2(t)\Big) \leq C \varepsilon^2.$$
Hence we complete the proof of Proposition \ref{mean-field_est_2}.
\end{proof}
It follows from Proposition \ref{mean-field_est_1} and Proposition \ref{mean-field_est_2} that
\begin{align*}
& \max_{i = 1, \dots, N} \ept \Big( \sup_{t \in [0,T]} | X^\varepsilon_{N,i} - \hat{X}_i |^2(t) \Big) \\
\leq & 2\max_{i = 1, \dots, N} \ept \Big( \sup_{t \in [0,T]} | X^\varepsilon_{N,i} -  \bar{X}^\varepsilon_i |^2(t)  \Big)
+ 2\max_{i = 1, \dots, N} \ept \Big( \sup_{t \in [0,T]} |  \bar{X}^\varepsilon_i - \hat{X}_i |^2(t) \Big) \\
\leq & C N^{-\beta} +  C \varepsilon^{2}\leq C\varepsilon^2,
\end{align*}
which implies \eqref{r3}.

\subsection{Propagation of chaos in the strong sense}\label{relative entropy}
This subsection is devoted to the proof of Theorem \ref{relative}, which can be divided into two parts.
In the first part, we derive the $L^\infty$ bound for $\nabla\log u^\varepsilon$, which will be used in the second part for the discussion of propagation of chaos.
\subsubsection{$L^\infty$ estimate for $\nabla\log u^\varepsilon$}
First of all, the $L^\infty$ estimate for $u^\varepsilon$ is shown by using a modified Moser iteration.
\begin{lemma}\label{Linfty}
Under the assumptions of Theorem \ref{theorempde},
we suppose that $u_0\in L^\infty(\mathbb{R}^2)$ and
$u^\varepsilon$ is a weak solution to the system \eqref{kellersegelmedium},
then there is a constant $C$ independent of $\varepsilon$ such that
\begin{align}
\|u^\varepsilon\|_{L^\infty((0,T)\times\R^2)}\leq C.
\end{align}
\end{lemma}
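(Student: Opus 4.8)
The plan is to recast \eqref{kellersegelmedium} as a uniformly parabolic linear equation for $u^\varepsilon$ with bounded lower-order coefficients and then run a Moser iteration that is anchored by the uniform-in-$\varepsilon$ $L^\infty(0,T;L^2(\R^2))$ bound coming from Proposition \ref{pLp}; the ``modification'' is that, the domain being $\R^2$, the lower-order term is reabsorbed not through a finite-measure H\"older inequality but through interpolation down to the conserved mass $\|u^\varepsilon(t)\|_{L^1(\R^2)}=1$.

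First I would write the first equation of \eqref{kellersegelmedium} as
\[
\partial_t u^\varepsilon=\nabla\cdot\big((e^{-v^\varepsilon}+1)\nabla u^\varepsilon-u^\varepsilon\,e^{-v^\varepsilon}\nabla v^\varepsilon\big),
\]
and note that, by Proposition \ref{pLp} together with the second equation of \eqref{kellersegelmedium}, elliptic regularity and the embedding $W^{2,p}(\R^2)\hookrightarrow W^{1,\infty}(\R^2)$ for $p>2$, both $\|v^\varepsilon\|_{L^\infty((0,T)\times\R^2)}$ and $\|\nabla v^\varepsilon\|_{L^\infty((0,T)\times\R^2)}$ are bounded uniformly in $\varepsilon$; since $v^\varepsilon\ge 0$ this gives $e^{-v^\varepsilon}+1\in[1,2]$ and $\|e^{-v^\varepsilon}\nabla v^\varepsilon\|_{L^\infty((0,T)\times\R^2)}\le M$ with $M$ independent of $\varepsilon$. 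Testing with $p(u^\varepsilon)^{p-1}$, $p\ge 2$ (the integrations by parts being justified by the $L^q$ and second-moment bounds of Propositions \ref{energy} and \ref{pLp}) and using Young's inequality to absorb the drift term — this is essentially the estimate underlying \eqref{Lp}, but with the $\nabla v^\varepsilon$-contribution bounded through the uniform $L^\infty$ bound on $\nabla v^\varepsilon$ so that the $p$-dependence of the constants stays transparent — I would arrive at
\[
\frac{d}{dt}\,\|u^\varepsilon\|_{L^p(\R^2)}^p+\frac{2(p-1)}{p}\,\|\nabla (u^\varepsilon)^{p/2}\|_{L^2(\R^2)}^2\le C_0\,p^2\,\|u^\varepsilon\|_{L^p(\R^2)}^p ,\qquad C_0=C_0(M).
\]

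The decisive step is the ``modified'' absorption: apply the two-dimensional Gagliardo--Nirenberg inequality $\|w\|_{L^2(\R^2)}^2\le C_{GN}\|\nabla w\|_{L^2(\R^2)}\|w\|_{L^1(\R^2)}$ to $w=(u^\varepsilon)^{p/2}$, which yields $\|u^\varepsilon\|_{L^p(\R^2)}^p\le C_{GN}\|\nabla (u^\varepsilon)^{p/2}\|_{L^2(\R^2)}\|u^\varepsilon\|_{L^{p/2}(\R^2)}^{p/2}$, and a further Young inequality then absorbs a fraction of the dissipation into the left-hand side, leaving
\[
\frac{d}{dt}\,\|u^\varepsilon\|_{L^p(\R^2)}^p\le C_1\,p^{4}\,\|u^\varepsilon\|_{L^{p/2}(\R^2)}^{p},\qquad C_1=C_1(M,C_{GN}),
\]
the crucial feature being that the constant grows only polynomially in $p$. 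Iterating with $p_k:=2^k$, integrating in time on $[0,T]$, using $\|u_0\|_{L^{p_k}(\R^2)}^{p_k}\le\|u_0\|_{L^\infty(\R^2)}^{p_k-1}$ (mass one) and $\|u^\varepsilon(t)\|_{L^{p_{k-1}}(\R^2)}^{p_k}\le M_{k-1}^{p_k}$ with $M_j:=\sup_{t\in[0,T]}\|u^\varepsilon(t)\|_{L^{p_j}(\R^2)}$, I would obtain the recursion
\[
M_k\le\big(1+C_1\,p_k^{4}\,T\big)^{1/p_k}\,\max\{1,\|u_0\|_{L^\infty(\R^2)},M_{k-1}\}.
\]
Since $\sum_{k\ge1}p_k^{-1}\log\!\big(1+C_1 p_k^{4} T\big)<\infty$, the product $\prod_{k\ge1}(1+C_1 p_k^{4}T)^{1/p_k}$ converges to a finite constant depending only on $M$, $C_{GN}$ and $T$; anchoring at $M_1=\sup_{[0,T]}\|u^\varepsilon\|_{L^2(\R^2)}$, which is bounded uniformly in $\varepsilon$ by Proposition \ref{pLp}, gives $\sup_k M_k\le C$ with $C$ independent of $\varepsilon$. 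Letting $k\to\infty$ and using $\|f\|_{L^\infty(\R^2)}\le\liminf_k\|f\|_{L^{p_k}(\R^2)}$ yields $\|u^\varepsilon(t)\|_{L^\infty(\R^2)}\le C$ for a.e.\ $t\in(0,T)$, which is the claim.

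I expect the main obstacle to be bookkeeping rather than a genuinely new idea: one must verify that the $L^\infty$ bounds on $v^\varepsilon$ and $\nabla v^\varepsilon$ extracted above are honestly $\varepsilon$-independent (they are, because \eqref{Lp} is), and one must organise the two Young inequalities so that the constant multiplying $\|u^\varepsilon\|_{L^{p/2}}^{p}$ grows only polynomially in $p$ — this polynomial growth is precisely what makes $\prod_k(1+C_1 p_k^{4}T)^{1/p_k}$ converge, and it is the reason the whole-space version of Moser's scheme closes despite the absence of a finite-measure H\"older estimate. A minor additional point is justifying the integration by parts in the $L^p$ estimate, which is handled by the moment and $L^p$ bounds of Propositions \ref{energy} and \ref{pLp}.
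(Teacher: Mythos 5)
Your proposal is correct and follows essentially the same route as the paper: a Moser iteration on the levels $p_k=2^k$, with the drift term absorbed through the uniform-in-$\varepsilon$ bound $\|\nabla v^\varepsilon\|_{L^\infty((0,T)\times\R^2)}\leq C$ (coming from \eqref{26} and Sobolev embedding) and the Nash/Gagliardo--Nirenberg inequality $\|w\|_{L^2(\R^2)}^2\leq C\|w\|_{L^1(\R^2)}\|\nabla w\|_{L^2(\R^2)}$ applied to $w=(u^\varepsilon)^{p_k/2}$ to step down to the $L^{p_{k-1}}$ norm, exactly as in the paper's proof. The only differences are bookkeeping: the paper keeps a damping term $-\|u^\varepsilon\|_{L^{p_k}}^{p_k}$ and uses the $e^t$-trick to get a time-uniform bound anchored at the conserved mass, whereas you integrate on $[0,T]$ and anchor at the $L^2$ bound of Proposition \ref{pLp}, which is equally sufficient for the statement on $(0,T)$.
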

\begin{proof}
A multiplication of \eqref{kellersegelmedium} by
$p_k{u^\varepsilon}^{p_k-1}$ $(p_k=2^k \mbox{ for } k=1,2,\ldots)$ and integration over $\mathbb{R}^2$ leads to
\begin{align*}
\frac{d}{dt}\int_{\mathbb{R}^2}{u^\varepsilon}^{p_k}dx
=&-p_k(p_k-1)\int_{\mathbb{R}^2}{u^\varepsilon}^{p_k-2}|\nabla u^\varepsilon|^2 dx\\
&+p_k(p_k-1)\int_{\mathbb{R}^2}e^{-v^\varepsilon}{u^\varepsilon}^{p_k-1}\nabla v^\varepsilon\cdot\nabla u^\varepsilon dx
-p_k(p_k-1)\int_{\mathbb{R}^2}e^{-v^\varepsilon}{u^\varepsilon}^{p_k-2}|\nabla u^\varepsilon|^2 dx\\
\leq& -\frac{3p_k(p_k-1)}{4}\int_{\mathbb{R}^2}{u^\varepsilon}^{p_k-2}|\nabla u^\varepsilon|^2 dx
+p_k(p_k-1)\|\nabla v^\varepsilon\|^2_{L^\infty(\mathbb{R}^2)}\int_{\mathbb{R}^2}{u^\varepsilon}^{p_k} dx\\
\leq& -\frac{3(p_k-1)}{p_k}\int_{\mathbb{R}^2}|\nabla {u^\varepsilon}^{p_k/2}|^2 dx
+p_k(p_k-1)\|\nabla v^\varepsilon\|^2_{L^\infty(\mathbb{R}^2)}\int_{\mathbb{R}^2}{u^\varepsilon}^{p_k} dx.
\end{align*}
We deduce from the Nash inequality that
\begin{align}\label{re1}
\|u^\varepsilon\|_{L^{p_k}(\mathbb{R}^2)}^{p_k}&=\|{u^\varepsilon}^{p_k/2}\|_{L^2(\mathbb{R}^2)}^2
\leq C\|{u^\varepsilon}^{p_k/2}\|_{L^1(\mathbb{R}^2)}\|\nabla {u^\varepsilon}^{p_k/2}\|_{L^2(\mathbb{R}^2)}\nonumber\\
&=C\|u^\varepsilon\|^{p_{k-1}}_{L^{p_{k-1}}(\mathbb{R}^2)}\|\nabla {u^\varepsilon}^{p_k/2}\|_{L^2(\mathbb{R}^2)},
\end{align}
this together with $L^\infty((0,T)\times\mathbb{R}^2)$ bound for $\nabla v^\varepsilon$, which follows from \eqref{26} and the continuous embedding
$W^{1,p}(\mathbb{R}^2)\hookrightarrow L^\infty(\mathbb{R}^2)$ (for $p>2$), show that
\begin{align}\label{re2}
\frac{d}{dt}\int_{\mathbb{R}^2}{u^\varepsilon}^{p_k}dx
&\leq -\frac{3(p_k-1)}{p_k}\int_{\mathbb{R}^2}|\nabla {u^\varepsilon}^{p_k/2}|^2dx
+Cp_k(p_k-1)\|u^\varepsilon\|_{L^{p_{k-1}}(\mathbb{R}^2)}^{p_{k-1}}
\|\nabla {u^\varepsilon}^{p_k/2}\|_{L^2(\mathbb{R}^2)}\nonumber\\
&\leq -\frac{p_k-1}{p_k}\int_{\mathbb{R}^2}|\nabla {u^\varepsilon}^{p_k/2}|^2dx
+\bar{C}\big(p_k(p_k-1)\|u^\varepsilon\|_{L^{p_{k-1}}(\mathbb{R}^2)}^{p_{k-1}} \big)^2,
\end{align}
where $\bar{C}>0$ appeared in this proof is a constant independent of $p_k$.
Taking into account \eqref{re1}, we have
\begin{align*}
\|u^\varepsilon\|_{L^{p_k}(\mathbb{R}^2)}^{p_k}
&\leq \frac{p_k-1}{p_k}\int_{\mathbb{R}^2} |\nabla {u^\varepsilon}^{p_k/2}|^2 dx
+\frac{p_k}{4(p_k-1)}\big(C\|u^\varepsilon\|_{L^{p_{k-1}}(\mathbb{R}^2)}^{p_{k-1}} \big)^2\nonumber\\
&\leq \frac{p_k-1}{p_k}\int_{\mathbb{R}^2} |\nabla {u^\varepsilon}^{p_k/2}|^2 dx
+\bar{C}\|u^\varepsilon\|^{2p_{k-1}}_{L^{p_{k-1}}(\mathbb{R}^2)}.
\end{align*}
We combine this result and \eqref{re2} to find that
\begin{align}\label{re20}
\frac{d}{dt}\|u^\varepsilon\|_{L^{p_k}(\mathbb{R}^2)}^{p_k}\leq
-\|u^\varepsilon\|_{L^{p_k}(\mathbb{R}^2)}^{p_k}+\bar{C}p_k^4\|u^\varepsilon\|_{L^{p_{k-1}}(\mathbb{R}^2)}^{2p_{k-1}}.
\end{align}
Let $y_k(t):=\|u^\varepsilon\|_{L^{p_k}(\mathbb{R}^2)}^{p_k}$. The inequality \eqref{re20} can be rewritten into
\begin{align*}
(e^t y_k(t))'\leq \bar{C}p_k^4e^ty_{k-1}^2(t)
\leq \bar{C} 16^k e^t\sup_{t\geq 0}y^2_{k-1}(t)
\leq a_ke^t\sup_{t\geq 0}y^2_{k-1}(t),
\end{align*}
where $a_k:=\bar{C}16^k$. This implies that
\begin{align*}
y_k(t)\leq 2a_k\max\{\sup_{t\geq 0}y^2_{k-1}(t),y_k(0) \}.
\end{align*}
Here
$$y_k(0)=\|u_0\|_{L^{p_k}(\mathbb{R}^2)}^{p_k}
\leq \max\{\|u_0\|_{L^1(\mathbb{R}^2)},\|u_0\|_{L^\infty(\mathbb{R}^2)} \}^{p_k}\leq D^{2^k}, $$
where $D:=\max\{1,\|u_0\|_{L^\infty(\mathbb{R}^2)} \}$.
After $k-1$ times iteration, we have
\begin{align*}
y_k(t)&\leq 2a_k\max\{\sup_{t\geq 0}y^2_{k-1}(t),D^{2^k} \}\\
&\leq (2a_k)(2a_{k-1})^2(2a_{k-2})^{2^2}\cdots(2a_1)^{2^{k-1}}\max\{\sup_{t\geq 0}y_0^{2^k}(t),D^{2^k} \}\\
&\leq (2\bar{C})^{2^k-1}16^{2^{k+1}-k-2}\max\{\sup_{t\geq 0}y_0^{2^k}(t),D^{2^k} \}.
\end{align*}
Taking the power $\frac{1}{p_k}=\frac{1}{2^k}$ to derive that
\begin{align*}
\|u^\varepsilon\|_{L^{p_k}(\mathbb{R}^2)}\leq 2\bar{C}16^2\max\{\sup_{t\geq 0}y_0(t),D \}\leq\bar{C}\max\{\sup_{t\geq 0}y_0(t),D \}.
\end{align*}
Therefore, we complete the proof of Lemma \ref{Linfty}.
\end{proof}
Lemma \ref{Linfty} allows us to conclude some estimates.
\begin{lemma}\label{uH2}
Under the assumptions of Theorem \ref{theorempde},
we suppose that $u_0\in L^\infty(\mathbb{R}^2)$ and
$(u^\varepsilon,v^\varepsilon)$ is a solution to the system \eqref{kellersegelmedium},
then there is a constant $C$ independent of $\varepsilon$ such that
\begin{align}
&\|u^\varepsilon\|_{L^\infty(0,T;H^2(\mathbb{R}^2))}+\|u^\varepsilon\|_{L^2(0,T;H^3(\mathbb{R}^2))}\leq C,\label{re8}\\
&\|v^\varepsilon\|_{L^\infty(0,T;H^4(\mathbb{R}^2))}+\|v^\varepsilon\|_{L^2(0,T;H^5(\mathbb{R}^2))}\leq C.\label{re9}
\end{align}
\end{lemma}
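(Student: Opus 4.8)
The plan is to run a parabolic--elliptic bootstrap starting from the uniform $L^\infty$ bound of Lemma~\ref{Linfty}. First I would rewrite the first equation of \eqref{kellersegelmedium} in divergence form,
\[
\partial_t u^\varepsilon=\nabla\cdot\big((1+e^{-v^\varepsilon})\nabla u^\varepsilon-e^{-v^\varepsilon}u^\varepsilon\nabla v^\varepsilon\big),
\]
and record the consequences of Lemma~\ref{Linfty}: since $u^\varepsilon\ast j^\varepsilon$ is bounded in $L^\infty(0,T;L^q(\R^2))$ for every $q<\infty$, \eqref{26} together with $W^{2,p}(\R^2)\hookrightarrow W^{1,\infty}(\R^2)$ for $p>2$ shows that $u^\varepsilon$, $v^\varepsilon$ and $\nabla v^\varepsilon$ are bounded in $L^\infty((0,T)\times\R^2)$ uniformly in $\varepsilon$, while $\Delta v^\varepsilon=v^\varepsilon-\chi u^\varepsilon\ast j^\varepsilon$ is bounded in $L^\infty(0,T;L^2(\R^2))$; from \eqref{ad2} with $p=2$ and \eqref{6} we also have uniform bounds for $\nabla u^\varepsilon$ in $L^2((0,T)\times\R^2)$ and for $\nabla v^\varepsilon$ in $L^\infty(0,T;L^2(\R^2))$.

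For the first-order estimate I would test the equation with $-\Delta u^\varepsilon$. Using $\Delta(e^{-v^\varepsilon}u^\varepsilon)=e^{-v^\varepsilon}\Delta u^\varepsilon-2e^{-v^\varepsilon}\nabla v^\varepsilon\cdot\nabla u^\varepsilon+e^{-v^\varepsilon}(|\nabla v^\varepsilon|^2-\Delta v^\varepsilon)u^\varepsilon$, this gives
\[
\frac12\frac{d}{dt}\|\nabla u^\varepsilon\|_{L^2(\R^2)}^2+\int_{\R^2}(1+e^{-v^\varepsilon})|\Delta u^\varepsilon|^2\,dx
=2\int_{\R^2}e^{-v^\varepsilon}\nabla v^\varepsilon\cdot\nabla u^\varepsilon\,\Delta u^\varepsilon\,dx-\int_{\R^2}e^{-v^\varepsilon}(|\nabla v^\varepsilon|^2-\Delta v^\varepsilon)u^\varepsilon\,\Delta u^\varepsilon\,dx .
\]
By Young's inequality the right-hand side is bounded by a small multiple of $\|\Delta u^\varepsilon\|_{L^2}^2$ (absorbed on the left) plus $C\|\nabla v^\varepsilon\|_{L^\infty}^2\|\nabla u^\varepsilon\|_{L^2}^2$ plus $C\|u^\varepsilon\|_{L^\infty}^2\big(\|\nabla v^\varepsilon\|_{L^\infty}^2\|\nabla v^\varepsilon\|_{L^2}^2+\|\Delta v^\varepsilon\|_{L^2}^2\big)$, the last bracket being bounded uniformly in $t$ by the first step. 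Gr\"onwall's lemma (and $u_0\in H^1(\R^2)$) then yields $u^\varepsilon$ bounded in $L^\infty(0,T;H^1(\R^2))\cap L^2(0,T;H^2(\R^2))$, and applying $\nabla$ to the elliptic equation with $W^{2,2}$-regularity promotes this to $v^\varepsilon$ bounded in $L^\infty(0,T;W^{3,2}(\R^2))\cap L^2(0,T;W^{4,2}(\R^2))$; in particular $D^3v^\varepsilon$ is bounded in $L^\infty(0,T;L^2(\R^2))$ and $D^4v^\varepsilon$ in $L^2((0,T)\times\R^2)$.

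For the second-order estimate I would apply $\Delta$ to the equation, test with $\Delta u^\varepsilon$, integrate by parts, and isolate the leading dissipative contribution $e^{-v^\varepsilon}\nabla\Delta u^\varepsilon$ inside $\nabla\Delta(e^{-v^\varepsilon}u^\varepsilon)$, obtaining
\[
\frac12\frac{d}{dt}\|\Delta u^\varepsilon\|_{L^2(\R^2)}^2+\int_{\R^2}(1+e^{-v^\varepsilon})|\nabla\Delta u^\varepsilon|^2\,dx
=-\sum_k\int_{\R^2}R_k^\varepsilon\cdot\nabla\Delta u^\varepsilon\,dx ,
\]
where the finitely many $R_k^\varepsilon$ are products involving derivatives of $v^\varepsilon$ up to order three and of $u^\varepsilon$ up to order two. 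Estimating each term by Young's inequality, absorbing a small multiple of $\|\nabla\Delta u^\varepsilon\|_{L^2}^2$ and bounding the remainder with the quantities already collected --- the $L^\infty$ bounds on $u^\varepsilon,v^\varepsilon,\nabla v^\varepsilon$, the $L^\infty(0,T;L^q(\R^2))$ bounds on $D^2v^\varepsilon$, the $L^\infty(0,T;L^2(\R^2))$ bound on $D^3v^\varepsilon$, and the $L^\infty(0,T;L^2)\cap L^2(0,T;H^1)$, resp.\ $L^2((0,T)\times\R^2)$, bounds on $\nabla u^\varepsilon$, resp.\ $D^2u^\varepsilon$, using the two-dimensional Gagliardo--Nirenberg inequality to split $L^4$-norms (for instance $\|\nabla u^\varepsilon\|_{L^4}^2\le C\|\nabla u^\varepsilon\|_{L^2}\|D^2u^\varepsilon\|_{L^2}\in L^1(0,T)$) --- one arrives at $\frac{d}{dt}\|\Delta u^\varepsilon\|_{L^2}^2+c\|\nabla\Delta u^\varepsilon\|_{L^2}^2\le C\|\Delta u^\varepsilon\|_{L^2}^2+g(t)$ with $g\in L^1(0,T)$ uniformly in $\varepsilon$. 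Gr\"onwall's lemma (and $u_0\in H^2(\R^2)$) gives \eqref{re8}, and a final application of elliptic regularity to the $v^\varepsilon$-equation differentiated up to second order gives \eqref{re9}.

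The step I expect to be the main obstacle is the bookkeeping of the $R_k^\varepsilon$ in the second-order estimate: one must verify that the elliptic bootstrap really does equip $v^\varepsilon$ with enough regularity, in the correct space--time norms, so that after Young's inequality every remainder term lies in $L^1(0,T)$ with an $\varepsilon$-independent constant --- the borderline contributions being those with all three derivatives on $v^\varepsilon$ (controlled by $\|u^\varepsilon\|_{L^\infty}\|D^3v^\varepsilon\|_{L^2}$) and those of mixed Sobolev type such as $\|D^2v^\varepsilon\|_{L^4}\|\nabla u^\varepsilon\|_{L^4}$. Two minor technical points remain: the integrations by parts on $\R^2$ are justified through the decay encoded in the $L^p$- and weighted estimates of Theorem~\ref{theorempde} together with a routine cut-off/approximation; and the use of $u_0\in H^1\cap H^2$, which, although not literally among the stated hypotheses, holds under the regularity actually assumed when this lemma is invoked (namely $u_0\in L^\infty(\R^2)$ and $\nabla\log u_0\in W^{1,p}(\R^2)$, $p>2$, from Theorem~\ref{relative}, since then $\nabla u_0=u_0\nabla\log u_0$ and $D^2u_0=u_0(D^2\log u_0+\nabla\log u_0\otimes\nabla\log u_0)$ are square integrable), and otherwise the same bounds hold on every subinterval $(\tau,T)$, $\tau>0$, by parabolic smoothing.
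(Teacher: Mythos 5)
Your proof is correct and takes essentially the same route as the paper: first- and second-order parabolic energy estimates (the paper differentiates with $D^\alpha$, $|\alpha|=1,2$, and tests with $D^\alpha u^\varepsilon$, grouping the remainder as a commutator, rather than using your $\Delta$-based energies with expanded remainders, but the two are equivalent), driven by the $L^\infty$ bound of Lemma~\ref{Linfty} and the $W^{2,p}$ bounds \eqref{26}, with an intermediate elliptic bootstrap for $v^\varepsilon$ between the two steps, Gr\"onwall, and a final elliptic regularity argument for \eqref{re9}. Your observation that the Gr\"onwall step requires $u_0\in H^1\cap H^2$ is fair --- the paper's proof implicitly needs the same, and as you note it is satisfied under the hypotheses of Theorem~\ref{relative}, where the lemma is invoked.
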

\begin{proof}
Let $\alpha$ be a multi-index of order $|\alpha|=1, 2$.
We apply $D^\alpha$ to \eqref{kellersegelmedium}, multiply the resulting equation by $D^\alpha u^\varepsilon$ and integrate over $\mathbb{R}^2$ to conclude that
\begin{align}\label{re7}
&\frac{1}{2}\frac{d}{dt}\int_{\mathbb{R}^2}|D^\alpha u^\varepsilon|^2 dx+\int_{\mathbb{R}^2}|\nabla D^\alpha u^\varepsilon|^2 dx\nonumber\\
=&-\int_{\mathbb{R}^2}\big(\nabla D^\alpha(e^{-v^\varepsilon}u^\varepsilon)
-e^{-v^\varepsilon}\nabla D^\alpha u^\varepsilon \big)\cdot \nabla D^\alpha u^\varepsilon dx
-\int_{\mathbb{R}^2}e^{-v^\varepsilon}|\nabla D^\alpha u^\varepsilon|^2 dx\nonumber\\
\leq& \frac{1}{2}\int_{\mathbb{R}^2}|\nabla D^\alpha u^\varepsilon|^2 dx
+C\int_{\mathbb{R}^2}|\nabla D^\alpha(e^{-v^\varepsilon}u^\varepsilon)-e^{-v^\varepsilon}\nabla D^\alpha u^\varepsilon|^2 dx\nonumber\\
\leq& \frac{1}{2}\int_{\mathbb{R}^2}|\nabla D^\alpha u^\varepsilon|^2 dx
+C\|\nabla(e^{-v^\varepsilon})\|^2_{L^\infty(\mathbb{R}^2)}\|D^\alpha u^\varepsilon\|^2_{L^2(\mathbb{R}^2)}
+C\|\nabla D^\alpha(e^{-v^\varepsilon})\|^2_{L^2(\mathbb{R}^2)}\|u^\varepsilon\|^2_{L^\infty(\mathbb{R}^2)}.
\end{align}

First, we deal with the case of $|\alpha|=1$.
Estimate \eqref{26} and the continuous embedding $W^{1,p}(\mathbb{R}^2)\hookrightarrow L^4(\mathbb{R}^2)$ with $p=4/3$ give
\begin{align*}
\|\nabla D^\alpha(e^{-v^\varepsilon})\|_{L^2(\mathbb{R}^2)}^2
=\|e^{-v^\varepsilon}(D^\alpha v^\varepsilon\nabla v^\varepsilon-\nabla D^\alpha v^\varepsilon)\|_{L^2(\mathbb{R}^2)}^2
\leq \|D^\alpha v^\varepsilon\|_{L^4(\mathbb{R}^2)}^4+\|\nabla D^\alpha v^\varepsilon\|_{L^2(\mathbb{R}^2)}^2\leq C.
\end{align*}
We combine this result and \eqref{re7}, Lemma \ref{Linfty} and a uniform $L^\infty((0,T)\times\mathbb{R}^2)$ bound for $\nabla (e^{-v^{\varepsilon}})$  to find that
\begin{align}\label{re4}
\frac{1}{2}\frac{d}{dt}\int_{\mathbb{R}^2}|D^\alpha u^\varepsilon|^2 dx+ \frac{1}{2}\int_{\mathbb{R}^2}|\nabla D^\alpha u^\varepsilon|^2 dx
\leq
C\int_{\mathbb{R}^2}|D^\alpha u^\varepsilon|^2 dx+C.
\end{align}
Because of the inequality \eqref{re4}, we can apply Gronwall's inequality to infer a uniform $L^\infty(0,T;L^2(\mathbb{R}^2))$ bound for $D^\alpha u^\varepsilon$
and a uniform $L^2((0,T)\times\mathbb{R}^2)$ bound for
 $\nabla D^\alpha u^\varepsilon$. Thus, in accordance with \eqref{Lp} with $p=2$, we obtain that
\begin{align}\label{re5}
\|u^\varepsilon\|_{L^\infty(0,T;H^1(\mathbb{R}^2))}+\|u^\varepsilon\|_{L^2(0,T;H^2(\mathbb{R}^2))}\leq C,
\end{align}
which implies that, for $|\beta|=3$,
\begin{align}\label{re6}
\|D^\beta v^\varepsilon\|_{L^\infty(0,T;L^2(\mathbb{R}^2))}+\|\nabla D^{\beta}v^\varepsilon\|_{L^2((0,T)\times\mathbb{R}^2)}
\leq& C\|\Delta D^\alpha v^\varepsilon\|_{L^\infty(0,T;L^2(\mathbb{R}^2))}
+C\|\Delta \nabla D^{\alpha} v^\varepsilon\|_{L^2((0,T)\times\mathbb{R}^2)}\nonumber\\
\leq& C\|D^\alpha v^\varepsilon\|_{L^\infty(0,T;L^2(\mathbb{R}^2))}
+C\|D^\alpha u^\varepsilon\ast j^\varepsilon\|_{L^\infty(0,T;L^2(\mathbb{R}^2))}\nonumber\\
&+C\|\nabla D^\alpha v^\varepsilon\|_{L^2((0,T)\times\mathbb{R}^2)}
+C\|\nabla D^\alpha u^\varepsilon\ast j^\varepsilon\|_{L^2((0,T)\times\mathbb{R}^2)}\nonumber\\
\leq& C\|D^\alpha v^\varepsilon\|_{L^\infty(0,T;L^2(\mathbb{R}^2))}
+C\|D^\alpha u^\varepsilon\|_{L^\infty(0,T;L^2(\mathbb{R}^2))}\nonumber\\
&+C\|\nabla D^\alpha v^\varepsilon\|_{L^2((0,T)\times\mathbb{R}^2)}+C\|\nabla D^\alpha u^\varepsilon\|_{L^2((0,T)\times\mathbb{R}^2)}
\leq C.
\end{align}

Now we turn to the case of $|\alpha|=2$.
The estimates \eqref{26} and \eqref{re6} give a uniform $L^\infty(0,T;L^2(\mathbb{R}^2))$ bound for $\nabla D^{\alpha}(e^{-v^\varepsilon})$.
Together with \eqref{re7} and Lemma \ref{Linfty}, we infer that
\begin{align}\label{re21}
\frac{1}{2}\frac{d}{dt}\int_{\mathbb{R}^2}|D^\alpha u^\varepsilon|^2 dx+\frac{1}{2}\int_{\mathbb{R}^2}|\nabla D^\alpha u^\varepsilon|^2 dx
\leq
C\int_{\mathbb{R}^2}|D^\alpha u^\varepsilon|^2 dx+C.
\end{align}
Thanks to \eqref{re5}, \eqref{re21} and Gronwall's inequality, we get \eqref{re8}.
Following the same way as \eqref{re6}, we deduce from \eqref{26}, \eqref{re8} and \eqref{re6} that, for $|\beta|=4$,
\begin{align*}
\|D^{\beta}v^\varepsilon\|_{L^\infty(0,T;L^2(\mathbb{R}^2))}+\|\nabla D^{\beta}v^\varepsilon\|_{L^2((0,T)\times\mathbb{R}^2)}
\leq& C\|D^\alpha v^\varepsilon\|_{L^\infty(0,T;L^2(\mathbb{R}^2))}+C\|D^\alpha u^\varepsilon\|_{L^\infty(0,T;L^2(\mathbb{R}^2))}\\
&+C\|\nabla D^\alpha v^\varepsilon\|_{L^2((0,T)\times\mathbb{R}^2)}+C\|\nabla D^\alpha u^\varepsilon\|_{L^2((0,T)\times\mathbb{R}^2)}\leq C,
\end{align*}
achieving \eqref{re9}.
\end{proof}
Based on Lemmas \ref{Linfty} and \ref{uH2}, we derive the uniform $L^\infty$ bound for $\nabla\log u^\varepsilon$ in the following discussion.
For notational simplicity, we use $\partial_i$ $(i=1,2)$ to denote $\partial_{x_i}$ $(i=1,2)$ in this subsection.

Since $\nabla\log u_0\in W^{1,p}(\mathbb{R}^2)$ $(p>2)$, we know by Sobolev embedding theorem that $\nabla\log u_0\in L^\infty(\R^2)$.
Now we denote $\|\nabla\log u_0\|_{W^{1,p}(\R^2)}=M_0$, we will prove that there exists
 a time $T^*\in (0,T)$ such that $\|\nabla\log u^\varepsilon\|_{L^\infty(0,T^*:W^{1,p}(\R^2))}$ is bounded uniformly in $\varepsilon$. Here
$\partial_i\log u^\varepsilon=\partial_i\rho^\varepsilon$ $(i=1,2)$ is the solution to the following problem
on the time interval $(0,T)$ for any fixed $T>0$
\begin{align}\label{re10}
\begin{cases}
\partial_t\partial_i\rho^\varepsilon(t,x)=&(e^{-v^\varepsilon}+1)\Delta\partial_i\rho^\varepsilon-e^{-v^\varepsilon}\partial_i v^\varepsilon\Delta\rho^\varepsilon
+2(e^{-v^\varepsilon}+1)\nabla\rho^\varepsilon\cdot\partial_i\nabla\rho^\varepsilon-e^{-v^\varepsilon}\partial_i v^\varepsilon|\nabla\rho^\varepsilon|^2\\
&+2e^{-v^\varepsilon}\partial_i v^\varepsilon\nabla v^\varepsilon\cdot\nabla\rho^\varepsilon
-2e^{-v^\varepsilon}\partial_i\nabla v^\varepsilon\cdot \nabla\rho^\varepsilon
-2e^{-v^\varepsilon}\nabla v^\varepsilon\cdot\partial_i\nabla\rho^\varepsilon-e^{-v^\varepsilon}\partial_i v^\varepsilon|\nabla v^\varepsilon|^2\\
&+2e^{-v^\varepsilon}\nabla v^\varepsilon\cdot\partial_i\nabla v^\varepsilon
+e^{-v^\varepsilon}\partial_i v^\varepsilon\Delta v^\varepsilon-e^{-v^\varepsilon}\Delta\partial_i v^\varepsilon,\\
\partial_i\rho^\varepsilon(0,x)=&\partial_i\log u_0.
\end{cases}
\end{align}
Or equivalently, for any fixed $T>0$, we can find a small $M_0$, so that  $\|\nabla\log u^\varepsilon\|_{L^\infty(0,T:W^{1,p}(\R^2))}$ is uniformly bounded in $\varepsilon$.

We consider the following initial problem for a vector valued function $\mathbf{p}=(p_1,p_2):[0,T]\times\R^2\rightarrow\R^2$:
\begin{align}\label{pproblem}
	\begin{cases}
		\partial_t \mathbf{p}=&(e^{-v^\varepsilon}+1)\Delta\mathbf{p}-e^{-v^\varepsilon}\nabla v^\varepsilon\nabla\cdot \mathbf{p}
		+2(e^{-v^\varepsilon}+1)\mathbf{p}\cdot\nabla \mathbf{p}-e^{-v^\varepsilon}\nabla v^\varepsilon|\mathbf{p}|^2\\
		&+2e^{-v^\varepsilon}\nabla v^\varepsilon\nabla v^\varepsilon\cdot\mathbf{p}
		-2e^{-v^\varepsilon}D^2 v^\varepsilon\cdot \mathbf{p}
		-2e^{-v^\varepsilon}\nabla v^\varepsilon\cdot\nabla\mathbf{p}+\mathbf{f}(v^\varepsilon),\\
	 (\mathbf{f}(v^\varepsilon))_i=&-e^{-v^\varepsilon}\partial_i v^\varepsilon|\nabla v^\varepsilon|^2+2e^{-v^\varepsilon}\nabla v^\varepsilon\cdot\partial_i\nabla v^\varepsilon
		+e^{-v^\varepsilon}\partial_i v^\varepsilon\Delta v^\varepsilon-e^{-v^\varepsilon}\Delta\partial_i v^\varepsilon,\\
		\mathbf{p}|_{t=0}=&\nabla\log u_0=\mathbf{p}_0, \qquad \|\mathbf{p}_0\|_{W^{1,p}(\mathbb{R}^2)}=M_0, \quad p>2.
	\end{cases}
\end{align}
Obviously by using the estimates obtained in \eqref{re9} for $v^\varepsilon$,
we deduce that $\mathbf{f}(v^\varepsilon)$ is uniformly bounded in $L^\infty(0,T;L^{p}(\R^2))$.

Next we wish to prove the following well-posedness result for the problem \eqref{pproblem}.
\begin{lemma}\label{W1p}
	There exists a time $T^*>0$ such that the initial value problem of PDE system \eqref{pproblem} has a unique weak solution $\mathbf{p}$
    in $L^\infty(0,T^*;W^{1,p}(\R^2))$ $(p>2)$ with estimate
	\begin{equation}\label{M0estimate}
	\|\mathbf{p}\|_{L^\infty(0,T^*;W^{1,p}(\mathbb{R}^2))}\leq 2M_0.
	\end{equation}
\end{lemma}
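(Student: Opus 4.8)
The plan is to construct $\mathbf{p}$ by a linearization–iteration scheme and to close an a priori bound in $W^{1,p}(\R^2)$ on a short time interval, with uniqueness coming from a difference estimate in a weaker norm. Fix $\varepsilon>0$ and write $a^\varepsilon:=e^{-v^\varepsilon}+1$; since $v^\varepsilon\geq 0$ we have $1\leq a^\varepsilon\leq 2$ pointwise, so the principal part is uniformly parabolic with $\varepsilon$-independent constants. By Lemma \ref{uH2} and two-dimensional Sobolev embedding, $v^\varepsilon,\nabla v^\varepsilon,D^2v^\varepsilon$ and $a^\varepsilon$ are bounded in $L^\infty((0,T)\times\R^2)$, $D^3v^\varepsilon$ and $D^3a^\varepsilon$ are bounded in $L^\infty(0,T;L^q(\R^2))$ for every $q<\infty$, and $\mathbf{f}(v^\varepsilon)$ together with its spatial gradient is bounded in $L^\infty(0,T;L^p(\R^2))\cap L^2(0,T;L^p(\R^2))$ — all uniformly in $\varepsilon$. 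Collecting into a map $\mathbf{p}\mapsto G(\mathbf{p})$ every term of \eqref{pproblem} other than $\partial_t\mathbf{p}-a^\varepsilon\Delta\mathbf{p}$ (so that $G$ is affine in $\nabla\mathbf{p}$, quadratic through $\mathbf{p}\cdot\nabla\mathbf{p}$, cubic through $\nabla v^\varepsilon|\mathbf{p}|^2$, plus the inhomogeneity $\mathbf{f}(v^\varepsilon)$), the system reads $\partial_t\mathbf{p}=a^\varepsilon\Delta\mathbf{p}+G(\mathbf{p})$, $\mathbf{p}(0)=\mathbf{p}_0$.

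\emph{Iteration and uniform estimate.} Put $\mathbf{p}^{(0)}:=\mathbf{p}_0$ and let $\mathbf{p}^{(n+1)}$ solve the linear uniformly parabolic system $\partial_t\mathbf{p}^{(n+1)}=a^\varepsilon\Delta\mathbf{p}^{(n+1)}+G(\mathbf{p}^{(n)})$, $\mathbf{p}^{(n+1)}(0)=\mathbf{p}_0$; since $a^\varepsilon$ is smooth, bounded and bounded below, and $G(\mathbf{p}^{(n)})\in L^2(0,T;L^p(\R^2))$ whenever $\mathbf{p}^{(n)}\in L^\infty(0,T;W^{1,p}(\R^2))$ (using $W^{1,p}(\R^2)\hookrightarrow L^\infty(\R^2)$ for $p>2$ to put $\mathbf{p}^{(n)}\cdot\nabla\mathbf{p}^{(n)}$ and $|\mathbf{p}^{(n)}|^2$ in $L^p$), standard linear parabolic theory (maximal $L^q$-regularity) provides a unique $\mathbf{p}^{(n+1)}\in C([0,T];W^{1,p}(\R^2))$. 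Testing this equation with $|\mathbf{p}^{(n+1)}|^{p-2}\mathbf{p}^{(n+1)}$ and, after applying $\partial_k$ $(k=1,2)$, with $|\nabla\mathbf{p}^{(n+1)}|^{p-2}\partial_k\mathbf{p}^{(n+1)}$, the principal part produces the nonnegative dissipation $\int a^\varepsilon|\mathbf{p}^{(n+1)}|^{p-2}|\nabla\mathbf{p}^{(n+1)}|^2+\int a^\varepsilon|\nabla\mathbf{p}^{(n+1)}|^{p-2}|\nabla^2\mathbf{p}^{(n+1)}|^2$ (up to lower-order terms with $\nabla a^\varepsilon,D^2a^\varepsilon$ which are absorbed since those coefficients are bounded). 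The first-order terms and the second-order contribution coming from $\partial_k(\mathbf{p}^{(n)}\cdot\nabla\mathbf{p}^{(n)})$ are split by Young's inequality into an $\eta$-fraction of the dissipation plus a multiple of a polynomial in $\|\mathbf{p}^{(n)}\|_{W^{1,p}}$ and $\|\mathbf{p}^{(n+1)}\|_{W^{1,p}}$ — again via $W^{1,p}\hookrightarrow L^\infty$, e.g.\ $\int|\mathbf{p}^{(n)}|^2|\nabla\mathbf{p}^{(n)}|^p\leq\|\mathbf{p}^{(n)}\|_{L^\infty}^2\|\nabla\mathbf{p}^{(n)}\|_{L^p}^p\lesssim\|\mathbf{p}^{(n)}\|_{W^{1,p}}^{p+2}$ — while the inhomogeneous terms use $\mathbf{f}(v^\varepsilon),\nabla\mathbf{f}(v^\varepsilon)\in L^2(0,T;L^p(\R^2))$. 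Setting $y_n(t):=\|\mathbf{p}^{(n)}(t)\|_{W^{1,p}(\R^2)}^p$ this yields
\[
 y_{n+1}'(t)\leq g(t)\bigl(1+y_{n+1}(t)\bigr)+C\bigl(1+y_{n+1}(t)+y_n(t)+y_n(t)^{\gamma}\bigr)
\]
for some $\gamma>1$, some $g\in L^1(0,T)$, and $C$ depending only on the $\varepsilon$-uniform bounds above. Comparison with the scalar ODE having right-hand side $C(1+z+z^\gamma)+g(t)(1+z)$ and datum $M_0^p$ furnishes a time $T^*\in(0,T)$, depending only on $C$, $\|g\|_{L^1}$ and $M_0$, with $z(t)\leq(2M_0)^p$ on $[0,T^*]$; by induction $\|\mathbf{p}^{(n)}\|_{L^\infty(0,T^*;W^{1,p})}\leq 2M_0$ for all $n$. (If $M_0$ is small the right-hand side is dominated by its affine part, so $T^*$ may be taken equal to any prescribed $T<\infty$, which is the alternative formulation stated before the lemma.)

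\emph{Passage to the limit and uniqueness.} The uniform bound and the equation give $\partial_t\mathbf{p}^{(n)}$ bounded in $L^2(0,T^*;W^{-1,p}(\R^2))$, so up to a subsequence $\mathbf{p}^{(n)}\stackrel{*}{\rightharpoonup}\mathbf{p}$ in $L^\infty(0,T^*;W^{1,p}(\R^2))$. A difference estimate — writing $\delta^{(n)}:=\mathbf{p}^{(n)}-\mathbf{p}^{(n-1)}$, testing the equation for $\delta^{(n+1)}$ with $|\delta^{(n+1)}|^{p-2}\delta^{(n+1)}$ and controlling the differences of the nonlinear terms by the uniform $W^{1,p}$- (hence $L^\infty$-) bounds on the iterates, absorbing $\nabla\delta^{(n+1)}$ into the dissipation — shows that $(\mathbf{p}^{(n)})$ is Cauchy in $C([0,T^*];L^p(\R^2))$ (after shrinking $T^*$ once more if necessary). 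Strong $L^p$ convergence identifies every nonlinear limit, so $\mathbf{p}$ solves \eqref{pproblem} weakly on $[0,T^*]$, and weak-$*$ lower semicontinuity preserves $\|\mathbf{p}\|_{L^\infty(0,T^*;W^{1,p})}\leq 2M_0$, which is \eqref{M0estimate}. Uniqueness follows from the same $L^p$ difference estimate applied to two solutions with the same datum together with Gronwall's inequality. (Applied to $\mathbf{p}=\nabla\log u^\varepsilon$, which solves \eqref{re10}, i.e.\ \eqref{pproblem} with $\mathbf{p}_0=\nabla\log u_0$, this gives the $\varepsilon$-uniform bound on $\|\nabla\log u^\varepsilon\|_{L^\infty(0,T^*;W^{1,p})}$ needed in Theorem \ref{relative}.)

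\emph{Main obstacle.} The delicate point is closing the $W^{1,p}$ estimate for the quasi-quadratic term $\mathbf{p}\cdot\nabla\mathbf{p}$: differentiating it produces genuine second-order contributions $\sim\mathbf{p}\cdot\nabla^2\mathbf{p}$ and cubic first-order ones $\sim|\nabla\mathbf{p}|^{p+1}$, which must be split so that part is absorbed into the ($\varepsilon$-uniformly) coercive dissipation $\int a^\varepsilon|\nabla\mathbf{p}|^{p-2}|\nabla^2\mathbf{p}|^2$ and the remainder bounded by a polynomial in $\|\mathbf{p}\|_{W^{1,p}}$ — the latter via a two-dimensional Gagliardo–Nirenberg inequality for $|\nabla\mathbf{p}|^{p/2}$ and the embedding $W^{1,p}(\R^2)\hookrightarrow L^\infty(\R^2)$, which is exactly why $p>2$ is imposed. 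The second point requiring care is that every constant, hence $T^*$, must be independent of $\varepsilon$; this is immediate from $1\leq a^\varepsilon\leq 2$ and the $\varepsilon$-uniform bounds of Lemma \ref{uH2}.
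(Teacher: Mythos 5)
Your architecture is genuinely different from the paper's: you run a fully explicit Picard iteration $\partial_t\mathbf{p}^{(n+1)}=a^\varepsilon\Delta\mathbf{p}^{(n+1)}+G(\mathbf{p}^{(n)})$ and try to close (i) a uniform $W^{1,p}$ bound by ODE comparison and (ii) a Cauchy property in the weaker norm $C([0,T^*];L^p(\R^2))$, whereas the paper linearizes semi-implicitly --- in \eqref{pqproblem} the quadratic terms become $\mathbf{q}\cdot\nabla\mathbf{p}$ and $\mathbf{q}\cdot\mathbf{p}$, so the gradient always sits on the \emph{new} unknown --- and proves a contraction in the full metric of $L^\infty(0,T^*;W^{1,p}(\R^2))$ via Banach's fixed point theorem, using the difference system \eqref{errorproblem}. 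Your preliminary reductions (uniform parabolicity $1\le a^\varepsilon\le 2$, the $\varepsilon$-uniform bounds from Lemma \ref{uH2}, $\mathbf{f}(v^\varepsilon)\in L^\infty(0,T;L^p(\R^2))$, the role of $p>2$ through $W^{1,p}(\R^2)\hookrightarrow L^\infty(\R^2)$, and the smallness alternative for $M_0$) agree with the paper, and your invariance-of-the-ball step parallels \eqref{are16}. (A minor overstatement: by \eqref{re9}, $\nabla\mathbf{f}(v^\varepsilon)$ is only uniformly in $L^2(0,T;L^p(\R^2))$, not $L^\infty(0,T;L^p(\R^2))$; you only use the former, and the paper avoids $\nabla\mathbf{f}$ entirely by moving $\partial_j$ onto the test function, cf. the treatment of $K_8$.)

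The gap is in how you treat the lagged transport term. Since your nonlinearity is evaluated entirely at $\mathbf{p}^{(n)}$ while the dissipation is generated by $\mathbf{p}^{(n+1)}$, the claims that ``the second-order contribution coming from $\partial_k(\mathbf{p}^{(n)}\cdot\nabla\mathbf{p}^{(n)})$ is split by Young's inequality into an $\eta$-fraction of the dissipation'' and, in the Cauchy step, that the differences are controlled ``by the uniform $L^\infty$ bounds on the iterates, absorbing $\nabla\delta^{(n+1)}$ into the dissipation'' do not work as written: the dangerous derivatives fall on the \emph{old} iterate ($\mathbf{p}^{(n)}\cdot\nabla^2\mathbf{p}^{(n)}$, respectively $\mathbf{p}^{(n-1)}\cdot\nabla\delta^{(n)}$), and neither $\nabla^2\mathbf{p}^{(n)}$ nor $\nabla\delta^{(n)}$ is controlled by the norms you propagate ($L^\infty W^{1,p}$, respectively $C L^p$); your ``Main obstacle'' paragraph even absorbs $\mathbf{p}\cdot\nabla^2\mathbf{p}$ into $\int a^\varepsilon|\nabla\mathbf{p}|^{p-2}|\nabla^2\mathbf{p}|^2$, which presupposes both factors are the \emph{same} function, i.e. an a priori estimate on the solution rather than your iteration. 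The repair is to integrate by parts so that the derivative lands on the test function, hence on $\mathbf{p}^{(n+1)}$ (resp. $\delta^{(n+1)}$) where the dissipation lives --- this is exactly how the paper handles the analogous terms $K_2$--$K_8$ and the $\bar{\mathbf{q}}$-terms in \eqref{errorproblem} --- and, to get an actual contraction factor in $C([0,T^*];L^p)$, to interpolate the resulting integrals such as $\int|\delta^{(n)}|\,|\nabla\mathbf{p}^{(n-1)}|\,|\delta^{(n+1)}|^{p-1}\,dx$ against the dissipation of $\delta^{(n+1)}$ by the two-dimensional Gagliardo--Nirenberg inequality, so that after Young one is left with $C M_0^p\sup_t\|\delta^{(n)}\|_{L^p}^p$ and Gronwall gives a contraction for $T^*$ small; merely invoking $\|\delta^{(n)}\|_{L^\infty}\lesssim M_0$ produces no small factor. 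With these additions your scheme does close and yields \eqref{M0estimate}; the paper's semi-implicit linearization is precisely the device that makes such steps unnecessary, because the frozen argument $\mathbf{q}$ (and later $\bar{\mathbf{q}}$) never appears under a gradient.
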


\begin{remark}
Since $\nabla \rho^{\varepsilon}$ is a solution of \eqref{pproblem}, one directly obtains that $\mathbf{p}=\nabla \rho^{\varepsilon}$.
Therefore, the estimate \eqref{M0estimate} and  Sobolev embedding theorem imply that
\begin{align}\label{re23}
	\|\nabla\rho^\varepsilon\|_{L^\infty((0,T^*)\times\mathbb{R}^2)}\leq 2\tilde{C}M_0,
\end{align}
where $\tilde{C}$ is the optimal constant in the Sobolev inequality.
\end{remark}

\begin{proof}
	To use Banach fixed point theorem, we consider the following complete space
	\begin{equation}
		\label{Xspace}
		\mathcal{X}_{\bar{T}}=\{\mathbf{p}: \|\mathbf{p}\|_{L^\infty(0,\bar{T};W^{1,p}(\mathbb{R}^2))}\leq 2M_0 \}
	\end{equation}
with the metric, $\forall \mathbf{p},\mathbf{q}\in\mathcal{X}_{\bar{T}}$,
$$
d(\mathbf{p},\mathbf{q}):=\|\mathbf{p}-\mathbf{q}\|_{L^\infty(0,\bar{T};W^{1,p}(\mathbb{R}^2))}.
$$
In the above space, $\bar T$ is to be determined.
First for given $\mathbf{q}\in\mathcal{X}_{\bar{T}}$, we consider
\begin{align}\label{pqproblem}
	\begin{cases}
		\partial_t \mathbf{p}=&(e^{-v^\varepsilon}+1)\Delta\mathbf{p}-e^{-v^\varepsilon}\nabla v^\varepsilon\nabla\cdot \mathbf{p}
		+2(e^{-v^\varepsilon}+1)\mathbf{q}\cdot\nabla \mathbf{p}-e^{-v^\varepsilon}\nabla v^\varepsilon\mathbf{q}\cdot\mathbf{p}\\
		&+2e^{-v^\varepsilon}\nabla v^\varepsilon\nabla v^\varepsilon\cdot\mathbf{p}
		-2e^{-v^\varepsilon}D^2 v^\varepsilon\cdot \mathbf{p}
		-2e^{-v^\varepsilon}\nabla v^\varepsilon\cdot\nabla\mathbf{p}+\mathbf{f}(v^\varepsilon),\\
		\mathbf{p}|_{t=0}=&\nabla\log u_0=\mathbf{p}_0, \qquad \|\mathbf{p}_0\|_{W^{1,p}(\mathbb{R}^2)}=M_0, \quad p>2.
	\end{cases}
\end{align}
We use $C$ to denote a positive constant independent of $\varepsilon$ in the following discussion.
Sobolev embedding theorem yields that $\|\mathbf{q}\|_{L^\infty((0,\bar{T})\times\R^2)}\leq CM_0$.
Multiplying the equation \eqref{pqproblem} by  $|p_i|^{p-2}p_i$ $(i=1,2)$ and integrating over $\mathbb{R}^2$ leads to
\begin{align}
	\frac{1}{p}\frac{d}{dt}\int_{\mathbb{R}^2}|p_i|^p dx
	=&\int_{\mathbb{R}^2}(e^{-v^\varepsilon}+1)\Delta p_i |p_i|^{p-2}p_i dx
	-\int_{\mathbb{R}^2}e^{-v^\varepsilon}\partial_i v^\varepsilon\nabla\cdot \mathbf{p}|p_i|^{p-2}p_i dx\nonumber\\
	&+2\int_{\mathbb{R}^2}(e^{-v^\varepsilon}+1)\mathbf{q}\cdot\nabla p_i|p_i|^{p-2}p_i dx
	-\int_{\mathbb{R}^2}e^{-v^\varepsilon}\partial_i v^\varepsilon\mathbf{q}\cdot\mathbf{p}|p_i|^{p-2}p_i dx\nonumber\\
	&+2\int_{\mathbb{R}^2}e^{-v^\varepsilon}\partial_i v^\varepsilon\nabla v^\varepsilon\cdot\mathbf{p}|p_i|^{p-2}p_i dx
	-2\int_{\mathbb{R}^2}e^{-v^\varepsilon}\partial_i\nabla v^\varepsilon\cdot\mathbf{p}|p_i|^{p-2}p_i dx\nonumber\\
	&-2\int_{\mathbb{R}^2}e^{-v^\varepsilon}\nabla v^\varepsilon\cdot\nabla p_i|p_i|^{p-2}p_i dx
	+\int_{\mathbb{R}^2}(\mathbf{f}(v^\varepsilon))_i|p_i|^{p-2}p_i dx	=\sum_{l=1}^{8}I_l.\label{Lpest}
\end{align}

Now we are going to estimate the eight terms on the right hand side of \eqref{Lpest}.
The estimate \eqref{26} and the continuous embedding $W^{1,p}(\mathbb{R}^2)\hookrightarrow L^\infty(\mathbb{R}^2)$ (for $p>2$) give the $L^\infty((0,T)\times\mathbb{R}^2)$ bound for $\nabla v^\varepsilon$.
By doing integral by parts and using H\"older's inequality, we have
\begin{align*}
	I_1=&\int_{\mathbb{R}^2}e^{-v^\varepsilon}\nabla v^\varepsilon\cdot \nabla p_i|p_i|^{p-2}p_i dx
	-\int_{\R^2}(e^{-v^\varepsilon}+1)\nabla p_i\cdot \nabla(|p_i|^{p-2}p_i) dx\\
     =&\frac{2}{p}\int_{\R^2}e^{-v^\varepsilon}\nabla v^\varepsilon\cdot |p_i|^{\frac{p}{2}}\nabla|p_i|^{\frac{p}{2}} dx
     -(p-1)\int_{\mathbb{R}^2}(e^{-v^\varepsilon}+1)|p_i|^{p-2}|\nabla p_i|^2 dx\\
	\leq& \frac{p-1}{2p^2}\int_{\mathbb{R}^2}\Big|\nabla|p_i|^{\frac{p}{2}}\Big|^2 dx
	+C\|\nabla v^\varepsilon\|^2_{L^\infty(\mathbb{R}^2)}\int_{\mathbb{R}^2}|p_i|^p dx
	-\frac{4(p-1)}{p^2}\int_{\mathbb{R}^2}\Big|\nabla|p_i|^{\frac{p}{2}}\Big|^2 dx\\
	\leq& -\frac{7(p-1)}{2p^2}\int_{\mathbb{R}^2}\Big|\nabla|p_i|^{\frac{p}{2}}\Big|^2 dx
	+C\int_{\mathbb{R}^2}|p_i|^p dx.
\end{align*}
It follows from \eqref{re9} and the continuous embedding $H^2(\mathbb{R}^2)\hookrightarrow L^\infty(\mathbb{R}^2)$ that
$\nabla\partial_i v^\varepsilon$ is bounded in $L^\infty((0,T)\times\mathbb{R}^2)$.
Thanks to integration by parts, H\"older's inequality and Young's inequality, we deduce that
\begin{align*}
	I_2=&
	(p-1)\int_{\mathbb{R}^2}e^{-v^\varepsilon}\partial_i v^\varepsilon \mathbf{p}\cdot |p_i|^{p-2}\nabla p_i dx
	-\int_{\mathbb{R}^2}e^{-v^\varepsilon}\nabla v^\varepsilon\partial_i v^\varepsilon\cdot\mathbf{p}|p_i|^{p-2}p_i dx
	+\int_{\mathbb{R}^2}e^{-v^\varepsilon}\nabla\partial_i v^\varepsilon\cdot\mathbf{p}|p_i|^{p-2}p_i dx\\
	\leq &\frac{p-1}{8}\int_{\mathbb{R}^2}|p_i|^{p-2}|\nabla p_i|^2 dx
	+C\|\nabla v^\varepsilon\|_{L^\infty(\mathbb{R}^2)}^2\int_{\mathbb{R}^2}|\mathbf{p}|^p dx
	+\|\nabla\partial_i v^\varepsilon\|_{L^\infty(\mathbb{R}^2)}\int_{\mathbb{R}^2}|\mathbf{p}|^p dx\\
	\leq &\frac{p-1}{2p^2}\int_{\mathbb{R}^2}\Big|\nabla|p_i|^{\frac{p}{2}}\Big|^2 dx
	+C\int_{\mathbb{R}^2}|\mathbf{p}|^p dx.
\end{align*}
Similarly, \eqref{re9} and \eqref{re23} allow us to use H\"older's inequality and Young's inequality to obtain that
\begin{align*}
	I_3&=\frac{4}{p}\int_{\mathbb{R}^2}(e^{-v^\varepsilon}+1)\mathbf{q}\cdot\nabla |p_i|^{\frac{p}{2}}|p_i|^{\frac{p}{2}} dx\leq \frac{p-1}{2p^2}\int_{\mathbb{R}^2}\Big|\nabla|p_i|^{\frac{p}{2}}\Big|^2 dx
	+C\|\mathbf{q}(t,\cdot)\|^2_{L^\infty(\mathbb{R}^2)}\int_{\mathbb{R}^2}|\mathbf{p}|^p dx\\
	&\leq \frac{p-1}{2p^2}\int_{\mathbb{R}^2}\Big|\nabla|p_i|^{\frac{p}{2}}\Big|^2 dx
	+CM_0^2\int_{\mathbb{R}^2}|\mathbf{p}|^p dx,\\
	I_4&\leq \|\partial_i v^\varepsilon\|_{L^\infty(\mathbb{R}^2)}\|\mathbf{q}(t,\cdot)\|_{L^\infty(\mathbb{R}^2)}
	\int_{\mathbb{R}^2}|\mathbf{p}|^p dx
	\leq CM_0\int_{\mathbb{R}^2}|\mathbf{p}|^p dx,\\
	I_5+I_6&\leq 2(\|\nabla v^\varepsilon\|^2_{L^\infty(\mathbb{R}^2)}+\|\partial_i \nabla v^\varepsilon\|_{L^\infty(\mathbb{R}^2)})\int_{\mathbb{R}^2}|\mathbf{p}|^p dx
	\leq C\int_{\mathbb{R}^2}|\mathbf{p}|^p dx,\\
	I_7&\leq \frac{p-1}{2p^2}\int_{\mathbb{R}^2}\Big|\nabla|p_i|^{\frac{p}{2}}\Big|^2 dx
	+C\|\nabla v^\varepsilon\|^2_{L^\infty(\mathbb{R}^2)}\int_{\mathbb{R}^2}|\mathbf{p}|^p dx
	\leq \frac{p-1}{2p^2}\int_{\mathbb{R}^2}\Big|\nabla|p_i|^{\frac{p}{2}}\Big|^2 dx
	+C\int_{\mathbb{R}^2}|\mathbf{p}|^p dx,\\
	I_8&\leq \int_{\mathbb{R}^2}|p_i|^p dx+C\int_{\mathbb{R}^2}|\mathbf{f}(v^\varepsilon)|^{p} dx
	\leq \int_{\mathbb{R}^2}|p_i|^p dx+C.
\end{align*}
We combine the estimates for $I_1-I_8$ and \eqref{Lpest}, and do summation $\sum_{i=1}^2$ to obtain that
\begin{align}\label{re12}
	\frac{1}{p}\frac{d}{dt}\int_{\mathbb{R}^2}|\mathbf{p}|^p dx
	+\frac{2(p-1)}{p^2}\int_{\mathbb{R}^2}\sum_{i=1}^{2}\Big|\nabla|p_i|^{\frac{p}{2}}\Big|^2 dx
	\leq C(M_0^2+1)\int_{\mathbb{R}^2}|\mathbf{p}|^p dx+C.
\end{align}

Next we do the $L^p$ estimate for $\nabla\mathbf{p}$.
Applying $\partial_j$ $(j=1,2)$ to \eqref{pqproblem}, multiplying the resulting equation by $|\partial_j p_i|^{p-2}\partial_j p_i$ $(i,j=1,2)$
and integrating over $\mathbb{R}^2$, we deduce that
\begin{align}\label{Lpjest}
	\frac{1}{p}\frac{d}{dt}\int_{\mathbb{R}^2}|\partial_j p_i|^p dx
	=&\int_{\mathbb{R}^2}\partial_j\big((e^{-v^\varepsilon}+1)\Delta p_i \big)|\partial_j p_i|^{p-2}\partial_j p_i dx
	-\int_{\mathbb{R}^2}\partial_j(e^{-v^\varepsilon}\partial_i v^\varepsilon\nabla\cdot \mathbf{p})|\partial_j p_i|^{p-2}\partial_j p_i dx\nonumber\\
	&+2\int_{\mathbb{R}^2}\partial_j\big((e^{-v^\varepsilon}+1)\mathbf{q}\cdot\nabla p_i \big)|\partial_j p_i|^{p-2}\partial_j p_idx
	-\int_{\mathbb{R}^2}\partial_j(e^{-v^\varepsilon}\partial_i v^\varepsilon\mathbf{q}\cdot\mathbf{p})|\partial_j p_i|^{p-2}\partial_j p_i dx\nonumber\\
	&+2\int_{\mathbb{R}^2}\partial_j(e^{-v^\varepsilon}\partial_i v^\varepsilon\nabla v^\varepsilon\cdot \mathbf{p})|\partial_j p_i|^{p-2}\partial_j p_i dx
	-2\int_{\mathbb{R}^2}\partial_j(e^{-v^\varepsilon}\partial_i\nabla v^\varepsilon\cdot \mathbf{p})|\partial_j p_i|^{p-2}\partial_j p_i dx\nonumber\\
	&-2\int_{\mathbb{R}^2}\partial_j(e^{-v^\varepsilon}\nabla v^\varepsilon\cdot\nabla p_i)|\partial_j p_i|^{p-2}\partial_j p_i dx
	-\int_{\mathbb{R}^2}\partial_j(\mathbf{f}(v^\varepsilon))_i|\partial_j p_i|^{p-2}\partial_j p_i dx
    =\sum_{l=1}^{8}K_l.
\end{align}
The first term on the right-hand side of \eqref{Lpjest} can be estimated by \eqref{re9} and H\"older's inequality
\begin{align}\label{re14}
	K_1=&\int_{\mathbb{R}^2}\partial_j\big(e^{-v^\varepsilon}\Delta p_i \big)|\partial_j p_i|^{p-2}\partial_j p_i dx
	-(p-1)\int_{\mathbb{R}^2}|\partial_j p_i|^{p-2}|\nabla(\partial_j p_i)|^2 dx\nonumber\\
	=&-\int_{\mathbb{R}^2}\partial_jv^\varepsilon e^{-v^\varepsilon}\Delta p_i |\partial_j p_i|^{p-2}\partial_j p_i dx\nonumber\\
	&+\int_{\mathbb{R}^2}e^{-v^\varepsilon}\nabla v^\varepsilon\cdot\nabla (\partial_j p_i) |\partial_j p_i|^{p-2}\partial_j p_idx
-\frac{4(p-1)}{p^2}\int_{\mathbb{R}^2}(e^{-v^\varepsilon}+1)\Big|\nabla |\partial_j p_i|^{\frac{p}{2}}\Big|^2 dx\nonumber\\
	=&\int_{\mathbb{R}^2}\nabla(\partial_jv^\varepsilon e^{-v^\varepsilon})\cdot\nabla p_i |\partial_j p_i|^{p-2}\partial_j p_i dx
+\int_{\mathbb{R}^2}(\partial_jv^\varepsilon e^{-v^\varepsilon})\nabla p_i\cdot \nabla(|\partial_j p_i|^{p-2}\partial_j p_i) dx\nonumber\\
	&+\int_{\mathbb{R}^2}e^{-v^\varepsilon}\nabla v^\varepsilon\cdot\nabla (\partial_j p_i) |\partial_j p_i|^{p-2}\partial_j p_idx
-\frac{4(p-1)}{p^2}\int_{\mathbb{R}^2}(e^{-v^\varepsilon}+1)\Big|\nabla |\partial_j p_i|^{\frac{p}{2}}\Big|^2 dx\nonumber\\
	\leq& (\|\nabla(\partial_j v^\varepsilon)\|_{L^\infty(\R^2)}+\|\nabla v^\varepsilon\|^2_{L^\infty(\R^2)})\int_{\mathbb{R}^2}|\nabla p_i|^p dx
+\|\nabla v^\varepsilon\|_{L^\infty(\R^2)}\int_{\mathbb{R}^2}|\nabla p_i \cdot\nabla(|\partial_j p_i|^{p-2}\partial_j p_i)| dx\nonumber\\
	&+\|\nabla v^\varepsilon\|_{L^\infty(\R^2)}\int_{\mathbb{R}^2}\big|\nabla (\partial_j p_i) |\partial_j p_i|^{p-2}\partial_j p_i\big|dx
-\frac{4(p-1)}{p^2}\int_{\mathbb{R}^2}\Big|\nabla |\partial_j p_i|^{\frac{p}{2}}\Big|^2 dx\nonumber\\
\leq& C\int_{\mathbb{R}^2}|\nabla p_i|^p dx
+C(p-1)\int_{\mathbb{R}^2}\big||\partial_j p_i|^{p-2}\nabla(\partial_jp_i)\cdot\nabla p_i\big| dx\nonumber\\
	&+C\int_{\mathbb{R}^2}\Big||\partial_j p_i|^{\frac{p}{2}}\nabla|\partial_j p_i|^{\frac{p}{2}}\Big|dx
-\frac{4(p-1)}{p^2}\int_{\mathbb{R}^2}\Big|\nabla |\partial_j p_i|^{\frac{p}{2}}\Big|^2 dx\nonumber\\
	\leq &-\frac{2(p-1)}{p^2}\int_{\mathbb{R}^2}\Big|\nabla |\partial_j p_i|^{\frac{p}{2}}\Big|^2 dx
	+C\int_{\mathbb{R}^2}|\nabla p_i|^p dx.
\end{align}
The second term $K_2$ can be estimated by H\"older's inequality and Young's inequality
\begin{align*}
K_2=&(p-1)\int_{\R^2}e^{-v^\varepsilon}\partial_i v^\varepsilon\nabla\cdot\mathbf{p}|\partial_jp_i|^{p-2}\partial_j(\partial_j p_i)dx\\
\leq&\frac{p-1}{16}\int_{\R^2}\big||\partial_j p_i|^{\frac{p-2}{2}}\partial_j(\partial_j p_i) \big|^2 dx
+C\|\partial_i v^\varepsilon\|_{L^\infty(\R^2)}^2\int_{\R^2}|\nabla\cdot\mathbf{p}|^2|\partial_jp_i|^{p-2}dx\\
\leq& \frac{p-1}{4p^2}\int_{\mathbb{R}^2}\Big|\nabla |\partial_j p_i|^{\frac{p}{2}}\Big|^2 dx +C\int_{\mathbb{R}^2}|\nabla \mathbf{p}|^p dx.
\end{align*}
The terms $K_3$ and $K_4$, which depend on $\mathbf{q}$, are needed to be estimated carefully by using  $\|\mathbf{q}\|_{L^\infty((0,\bar{T})\times\mathbb{R}^2)}\leq CM_0$. Namely,
\begin{align*}
	K_3+K_4=&-2(p-1)\int_{\R^2}(e^{-v^\varepsilon}+1)\mathbf{q}\cdot\nabla p_i|\partial_jp_i|^{p-2}\partial_j(\partial_jp_i)dx
    +(p-1)\int_{\R^2}e^{-v^\varepsilon}\partial_iv^\varepsilon\mathbf{q}\cdot\mathbf{p}|\partial_jp_i|^{p-2}\partial_j(\partial_jp_i)dx\\
    \leq &\frac{p-1}{16}\int_{\R^2}|\partial_jp_i|^{p-2}|\partial_j(\partial_jp_i)|^2 dx
    +C\|\mathbf{q}\|^2_{L^\infty(\R^2)}\int_{\R^2}|\nabla p_i|^2 dx
    +C\|\partial_i v\|^2_{L^\infty(\R^2)}\|\mathbf{q}\|^2_{L^\infty(\R^2)}\int_{\R^2}|\mathbf{p}|^2|\partial_jp_i|^{p-2}dx\\
	\leq & \frac{p-1}{4p^2}\int_{\mathbb{R}^2}\Big|\nabla |\partial_j p_i|^{\frac{p}{2}}\Big|^2 dx +CM_0^2\int_{\mathbb{R}^2}|\nabla \mathbf{p}|^p dx
     +CM_0^2\int_{\mathbb{R}^2}|\mathbf{p}|^p dx.
\end{align*}
We combine Young's inequality and a uniform $L^\infty(0,T;L^{p}(\R^2))$ bound for $\mathbf{f}(v^\varepsilon)$,
which follows from \eqref{re9}, to infer that
\begin{align*}
K_8
&=\int_{\R^2}(\mathbf{f}(v^\varepsilon))_i\partial_j (|\partial_j p_i|^{p-2}\partial_j p_i) dx
=\frac{2(p-1)}{p}\int_{\R^2}(\mathbf{f}(v^\varepsilon))_i|\partial_jp_i|^{\frac{p}{2}-2}\partial_jp_i\partial_j |\partial_j p_i|^{\frac{p}{2}} dx\\
		\leq &\frac{p-1}{4p^2}\int_{\mathbb{R}^2}\Big|\nabla |\partial_j p_i|^{\frac{p}{2}}\Big|^2 dx
+C\int_{\R^2}|\mathbf{f}(v^\varepsilon)|^2|\partial_jp_i|^{p-2}dx\\
			\leq &\frac{p-1}{4p^2}\int_{\mathbb{R}^2}\Big|\nabla |\partial_j p_i|^{\frac{p}{2}}\Big|^2 dx +C\int_{\R^2}|(\mathbf{f}(v^\varepsilon))|^pdx+C\int_{\R^2}|\partial_jp_i|^{p}dx\\
				\leq &\frac{p-1}{4p^2}\int_{\mathbb{R}^2}\Big|\nabla |\partial_j p_i|^{\frac{p}{2}}\Big|^2 dx +C\int_{\R^2}|\partial_jp_i|^{p}dx+C.		
\end{align*}
Thanks to \eqref{re9}, \eqref{re23}, by using Young's inequality and H\"older's inequality,
the rest terms on the right-hand side of \eqref{Lpjest} can be estimated as follows
\begin{align*}
K_5+K_6+K_7
=&2\int_{\R^2}\big(-e^{-v^\varepsilon}\partial_i v^\varepsilon\nabla v^\varepsilon\cdot\mathbf{p}
+e^{-v^\varepsilon}\partial_i\nabla v^\varepsilon\cdot\mathbf{p}
+e^{-v^\varepsilon}\nabla v^\varepsilon\cdot\nabla p_i\big)\partial_j(|\partial_jp_i|^{p-2}\partial_jp_i)dx\\
=&\frac{4(p-1)}{p}\int_{\R^2}\big(-e^{-v^\varepsilon}\partial_i v^\varepsilon\nabla v^\varepsilon\cdot\mathbf{p}
+e^{-v^\varepsilon}\partial_i\nabla v^\varepsilon\cdot\mathbf{p}
+e^{-v^\varepsilon}\nabla v^\varepsilon\cdot\nabla p_i\big)|\partial_jp_i|^{\frac{p}{2}-2}\partial_ip_i\partial_j|\partial_jp_i|^{\frac{p}{2}}dx\\
\leq&\frac{p-1}{4p^2}\int_{\mathbb{R}^2}\Big|\nabla|\partial_jp_i|^{\frac{p}{2}}\Big|^2 dx
+C\|\nabla v^\varepsilon\|^4_{L^\infty(\R^2)}\int_{\R^2}|\mathbf{p}|^2|\partial_jp_i|^{p-2}dx
+C\|\partial_i\nabla v^\varepsilon\|^2_{L^\infty(\R^2)}\int_{\R^2}|\mathbf{p}|^2|\partial_jp_i|^{p-2}dx\\
&+C\|\nabla v^\varepsilon\|^2_{L^\infty(\R^2)}\int_{\R^2}|\nabla p_i|^2|\partial_jp_i|^{p-2}dx\\
\leq&\frac{p-1}{4p^2}\int_{\mathbb{R}^2}\Big|\nabla|\partial_jp_i|^{\frac{p}{2}}\Big|^2 dx
+C\int_{\mathbb{R}^2}|\nabla\mathbf{p}|^p dx+C\int_{\mathbb{R}^2}|\mathbf{p}|^p dx.
\end{align*}
We combine the estimates for $K_1-K_8$ and \eqref{Lpjest}, and take $\sum_{i,j=1}^2$ to obtain that
\begin{align}\label{re15}
	&\frac{1}{p}\frac{d}{dt}\int_{\mathbb{R}^2}|\nabla\mathbf{p}|^p dx
   +\frac{p-1}{p^2}\int_{\mathbb{R}^2}\sum_{i,j=1}^2\Big|\nabla |\partial_j p_i|^{\frac{p}{2}}\Big|^2  dx
	\leq C(M_0^2+1)\int_{\mathbb{R}^2}|\nabla\mathbf{p}|^p dx
   +C(M_0^2+1)\int_{\mathbb{R}^2}|\mathbf{p}|^p dx+C.
\end{align}

Thus, in accordance with \eqref{re12} we have
\begin{align}\label{are15}
	\dfrac{d}{dt}\|\mathbf{p}\|^p_{W^{1,p}(\mathbb{R}^2)} \leq C_1(M_0^2+1) \|\mathbf{p}\|^p_{W^{1,p}(\mathbb{R}^2)} +C_2.
\end{align}
By \eqref{are15} and Gronwall's inequality, we deduce that
\begin{align}\label{are16}
	\sup_{t\in (0,\bar T)}\|\mathbf{p}\|^p_{W^{1,p}(\mathbb{R}^2)} &\leq
	(\|\mathbf{p}_0\|^p_{W^{1,p}(\mathbb{R}^2)} +C_2\bar{T})e^{C_1(M_0^2+1)\bar{T}}
	\leq(M_0^p +C_2\bar{T})e^{C_1(M_0^2+1)\bar{T}}\leq 2^pM_0^p,
\end{align}
where the last inequality is valid by choosing $\bar T>0$ small enough. Therefore, we build a map $\mathcal{X}_{\bar{T}}\rightarrow\mathcal{X}_{\bar{T}}$.

In the next, we prove that this map is a contraction in $L^\infty(0,T^*;W^{1,p}(\R^2))$ for some small $T^*\leq \bar{T}$.
Take $\mathbf{q}_1,\mathbf{q}_2\in \mathcal{X}_{\bar{T}}$, let $\mathbf{p}_1,\mathbf{p}_2$ be the corresponding solution of \eqref{pqproblem}.
Then $\bar{\mathbf{p}}=\mathbf{p}_1-\mathbf{p}_2$ solves the problem with $\bar{\mathbf{q}}=\mathbf{q}_1-\mathbf{q}_2$
\begin{align}\label{errorproblem}
	\begin{cases}
		\partial_t \bar{\mathbf{p}}=&(e^{-v^\varepsilon}+1)\Delta\bar{\mathbf{p}}-e^{-v^\varepsilon}\nabla v^\varepsilon\nabla\cdot\bar{\mathbf{p}}
		+2(e^{-v^\varepsilon}+1)\mathbf{q}_1\cdot\nabla \bar{\mathbf{p}}-e^{-v^\varepsilon}\nabla v^\varepsilon\mathbf{q}_1\cdot\bar{\mathbf{p}}\\
		&	+2(e^{-v^\varepsilon}+1)\bar{\mathbf{q}}\cdot\nabla \mathbf{p}_2-e^{-v^\varepsilon}\nabla v^\varepsilon\bar{\mathbf{q}}\cdot\mathbf{p}_2\\
		&+2e^{-v^\varepsilon}\nabla v^\varepsilon\nabla v^\varepsilon\cdot\bar{\mathbf{p}}
		-2e^{-v^\varepsilon}D^2 v^\varepsilon\cdot \bar{\mathbf{p}}
		-2e^{-v^\varepsilon}\nabla v^\varepsilon\cdot\nabla\bar{\mathbf{p}},\\
		\bar{\mathbf{p}}|_{t=0}=&0.
	\end{cases}
\end{align}
Then similar to the discussion for $L^p$ estimate above, by using $|\bar{\mathbf{p}}_i|^{p-2}\bar{\mathbf{p}}_i$ $(i=1,2)$ as test function, notice that $\mathbf{p}_2\in \mathcal{X}_{\bar{T}}$ we have by Sobolev embedding that
\begin{align*}
&\int_{\mathbb{R}^2} \Big(2(e^{-v^\varepsilon}+1)\bar{\mathbf{q}}\cdot\nabla (\mathbf{p}_2)_i-e^{-v^\varepsilon}\partial_i v^\varepsilon\bar{\mathbf{q}}\cdot\mathbf{p}_2\Big) |\bar{\mathbf{p}}_i|^{p-2}\bar{\mathbf{p}}_i dx\\
\leq  &\int_{\R^2} |\bar{\mathbf{p}}|^pdx +C\int_{\R^2} |\bar{\mathbf{q}}|^p(|\nabla\mathbf{p}_2|^p+|\mathbf{p}_2|^p)dx\leq \int_{\R^2} |\bar{\mathbf{p}}|^pdx
+C \|\bar{\mathbf{q}}(t,\cdot)\|_{L^\infty(\R^2)}^p\|\mathbf{p}_2\|^p_{W^{1,p}(\R^2)}\\
\leq &\int_{\R^2} |\bar{\mathbf{p}}|^pdx +CM_0^p \|\bar{\mathbf{q}}(t,\cdot)\|_{W^{1,p}(\R^2)}^p.
\end{align*}
Thus, we obtain that
\begin{align}\label{contraction}
	&\frac{1}{p}\frac{d}{dt}\int_{\mathbb{R}^2}|\bar{\mathbf{p}}|^p dx
  +\frac{p-1}{p^2}\int_{\mathbb{R}^2}\sum_{i=1}^2\Big|\nabla |\bar{\mathbf{p}}_i|^{\frac{p}{2}}\Big|^2  dx
	\leq C(M_0^2+1)\int_{\mathbb{R}^2}|\bar{\mathbf{p}}|^p dx+CM_0^p \|\bar{\mathbf{q}}(t,\cdot)\|_{W^{1,p}(\R^2)}^p.
\end{align}
Furthermore we do also the $L^p$ estimate for $\nabla\bar{\mathbf{p}}$.
By taking derivative $\partial_j$ $(j=1,2)$ for \eqref{errorproblem} and multiplying the resulting equation by $|\partial_j\bar{\mathbf{p}}_i|^{p-2}\partial_j\bar{\mathbf{p}}_i$ $(i,j=1,2)$, considering the nonlinear term
\begin{align*}
	&\int_{\mathbb{R}^2} \partial_j\Big(2(e^{-v^\varepsilon}+1)\bar{\mathbf{q}}\cdot\nabla (\mathbf{p}_2)_i-e^{-v^\varepsilon}\partial_i v^\varepsilon\bar{\mathbf{q}}\cdot\mathbf{p}_2\Big) |\partial_j\bar{\mathbf{p}}_i|^{p-2}\partial_j\bar{\mathbf{p}}_i dx\\
	=&-\int_{\mathbb{R}^2} \Big(2(e^{-v^\varepsilon}+1)\bar{\mathbf{q}}\cdot\nabla (\mathbf{p}_2)_i-e^{-v^\varepsilon}\partial_i v^\varepsilon\bar{\mathbf{q}}\cdot\mathbf{p}_2\Big)\partial_j (|\partial_j\bar{\mathbf{p}}_i|^{p-2}\partial_j\bar{\mathbf{p}}_i) dx\\
	\leq& \frac{p-1}{4p^2}\int_{\R^2}\Big|\nabla|\partial_j\bar{\mathbf{p}}_i|^{\frac{p}{2}}\Big|^2dx
 +C\int_{\R^2} |\bar{\mathbf{q}}|^2(|\nabla\mathbf{p}_2|^2+|\mathbf{p}_2|^2)|\nabla\bar{\mathbf{p}}|^{p-2}dx\\
	\leq& \frac{p-1}{4p^2}\int_{\R^2}\Big|\nabla|\partial_j\bar{\mathbf{p}}_i|^{\frac{p}{2}}\Big|^2dx
  +C\int_{\R^2}|\nabla\bar{\mathbf{p}}|^pdx+C\int_{\R^2}|\bar{\mathbf{q}}|^p(|\nabla\mathbf{p}_2|^p+|\mathbf{p}_2|^p)dx\\
	\leq &\frac{p-1}{4p^2}\int_{\R^2}\Big|\nabla|\partial_j\bar{\mathbf{p}}_i|^{\frac{p}{2}}\Big|^2dx
  +C\int_{\R^2}|\nabla\bar{\mathbf{p}}|^pdx+CM_0^p \|\bar{\mathbf{q}}(t,\cdot)\|_{W^{1,p}(\R^2)}^p,
\end{align*}
and taking summation $\sum_{i,j=1}^2$, we have that
\begin{align}\label{contraction2}
	&\frac{1}{p}\frac{d}{dt}\int_{\mathbb{R}^2}|\nabla\bar{\mathbf{p}}|^pdx
   +\frac{p-1}{p^2}\int_{\mathbb{R}^2}\sum_{i,j=1}^2\Big|\nabla|\partial_j\bar{\mathbf{p}}_i|^{\frac{p}{2}}\Big|^2  dx
	\leq C(M_0^2+1)\|\bar{\mathbf{p}}\|^p_{W^{1,p}(\R^2)}
   +CM_0^p \|\bar{\mathbf{q}}(t,\cdot)\|_{W^{1,p}(\R^2)}^p.
\end{align}
We combine \eqref{contraction} and \eqref{contraction2} to get
\begin{align}
\frac{d}{dt}\|\bar{\mathbf{p}}\|^p_{W^{1,p}(\R^2)}
\leq C_3(M_0^2+1)\|\bar{\mathbf{p}}\|^p_{W^{1,p}(\R^2)}
   +C_4M_0^p \|\bar{\mathbf{q}}(t,\cdot)\|_{W^{1,p}(\R^2)}^p.
\end{align}
This, together with Gronwall's inequality, we have
\begin{align*}
	\sup_{t\in (0, T^*)}\|\bar{\mathbf{p}}(t,\cdot)\|_{W^{1,p}(\R^2)}^p
   \leq C_4M_0^p T^*e^{C_3(M_0^2+1)T^*} \|\bar{\mathbf{q}}\|^p_{L^\infty(0,T^*;W^{1,p}(\R^2))}.
\end{align*}
If we choose $T^*\in (0,\bar{T})$ small enough such that $C_4M_0^p T^*e^{C_3(M_0^2+1)T^*}<1$,
we obtain that the map is a contraction.
Therefore, we conclude that \eqref{pproblem} has a unique weak solution $\mathbf{p}$ in $L^\infty(0,T^*;W^{1,p}(\R^2))$ $(p>2)$
with the help of Banach fixed point theorem. Moreover, $\mathbf{p}$ satisfies
\begin{align*}
\|\mathbf{p}\|_{L^\infty(0,T^*;W^{1,p}(\R^2))}\leq 2M_0,
\end{align*}
finishing the proof of Lemma \ref{W1p}.
\end{proof}

\subsubsection{Propagation of chaos}
Based on the uniform $L^\infty(0,T^*;H^2(\mathbb{R}^2))$ bound for $\nabla\log u^\varepsilon$,
we can expect the quantitative propagation of chaos result in $L^\infty(0,T^*;L^1(\mathbb{R}^{2k}))$-norm.
The Kolmogorov forward equation of \eqref{generalized_regularized_particle_model} reads as
\begin{align}
\label{relativeuN}
\begin{cases}
\partial_t u_N^\varepsilon=\displaystyle\sum_{i=1}^N\Delta_{x_i}\Big( \exp \Big(-\frac{1}{N}\displaystyle\sum_{j=1}^N\Phi^\varepsilon(x_i-x_j) \Big)u_N^\varepsilon+u_N^\varepsilon\Big),     \\
u_N^\varepsilon(0,x_1,\cdots,x_N)={u_0}^{\otimes N}=u_0(x_1)\cdots u_0(x_N),
\end{cases}
\end{align}
where $u_N^\varepsilon(t,x_1,\cdots,x_N)$ is the joint law of $N$ particles $(X^\varepsilon_{N,i})_{1\leq i\leq N}$.
For fixed $\varepsilon$, the global existence and uniqueness of a classical solution to the linear parabolic problem \eqref{relativeuN} can be obtained
as a trivial consequence of classical parabolic theory.
To derive a quantitative convergence result, we start with the error estimate between $u_N^\varepsilon$ and the product density ${u^\varepsilon}^{\otimes N}$, where $u^\varepsilon$ is the solution to \eqref{kellersegelmedium}.
It is easy to see
\begin{align}
\label{relativeuvare}
\begin{cases}\partial_t {u^\varepsilon}^{\otimes N}
=\displaystyle\sum_{i=1}^N\Delta_{x_i}\big(\exp (-\Phi^\varepsilon\ast u^\varepsilon(t,x_i) ){u^\varepsilon}^{\otimes N}+{u^\varepsilon}^{\otimes N}\big),     \\
{u^\varepsilon}^{\otimes N}(0,x_1,\cdots,x_N)=u_{0}(x_1)\cdots u_{0}(x_N).
\end{cases}
\end{align}
For $k\in\N$ and two probability density $f$ and $g$ on $\R^{2k}$, let
\begin{align*}
\mathcal{H}_k(f|g)
:=\int_{\R^{2k}}f\log\frac{f}{g}\,dx_1\cdots dx_k.
\end{align*}
We define the relative entropy between $u_N^\varepsilon$ and ${u^\varepsilon}^{\otimes N}$ as follows
\begin{align*}
\mathcal{H}(u_N^\varepsilon|{u^\varepsilon}^{\otimes N}):=\frac{1}{N}\mathcal{H}_N(u_N^\varepsilon|{u^\varepsilon}^{\otimes N})(t)
=\frac{1}{N}\int_{\R^{2N}}u_N^\varepsilon\log\frac{u_N^\varepsilon}{{u^\varepsilon}^{\otimes N}}\,dx_1\cdots dx_N.
\end{align*}
From \eqref{relativeuN} and \eqref{relativeuvare}, we deal with the evolution of the relative entropy as follows:
\begin{align}\label{relative1}
\frac{d}{dt}\mathcal{H}(u_N^\varepsilon|{u^\varepsilon}^{\otimes N})
=&\frac{1}{N}\int_{\mathbb{R}^{2N}}\Big(\partial_t u_N^\varepsilon\log \frac{u_N^\varepsilon}{{u^\varepsilon}^{\otimes N}}
-\frac{u_N^\varepsilon}{{u^\varepsilon}^{\otimes N}}\partial_t {u^\varepsilon}^{\otimes N}\Big)\,dx_1\cdots dx_N\nonumber\\
=& -\frac{1}{N}\sum_{i=1}^N\int_{\mathbb{R}^{2N}}\Big(\nabla_{x_i}u_N^\varepsilon-\nabla_{x_i}{u^\varepsilon}^{\otimes N}\frac{u_N^\varepsilon}{{u^\varepsilon}^{\otimes N}} \Big)
\cdot \nabla_{x_i}\log \frac{u_N^\varepsilon}{{u^\varepsilon}^{\otimes N}}\,dx_1\cdots dx_N \nonumber\\
&-\frac{1}{N}\sum_{i=1}^N\int_{\mathbb{R}^{2N}}\Big[\nabla_{x_i}\Big(\exp\Big(-\frac{1}{N}\sum_{j=1}^N\Phi^\varepsilon(x_i-x_j)\Big)u_N^\varepsilon \Big)
-\frac{u_N^\varepsilon}{{u^\varepsilon}^{\otimes N}}\nabla_{x_i}({u^\varepsilon}^{\otimes N}\exp(-\Phi^\varepsilon\ast u^\varepsilon(t,x_i))) \Big]\nonumber\\
&\phantom{xxxxxx}\cdot \nabla_{x_i}\log \frac{u_N^\varepsilon}{{u^\varepsilon}^{\otimes N}}\,dx_1\cdots dx_N\nonumber\\
=&-\frac{1}{N}\sum_{i=1}^N\int_{\mathbb{R}^{2N}}u_N^\varepsilon\Big|\nabla_{x_i}\log\frac{u_N^\varepsilon}{{u^\varepsilon}^{\otimes N}} \Big|^2\,dx_1\cdots dx_N\nonumber\\
&-\frac{1}{N}\sum_{i=1}^N\int_{\mathbb{R}^{2N}}\Big[u_N^\varepsilon\exp\Big(-\frac{1}{N}\sum_{j=1}^N\Phi^\varepsilon(x_i-x_j) \Big)
\Big(\nabla_{x_i}\log u_N^\varepsilon-\frac{1}{N}\sum_{j=1}^N\nabla_{x_i}\Phi^\varepsilon(x_i-x_j) \Big) \nonumber\\
&\phantom{xxxxx}-u_N^\varepsilon\exp(-\Phi^\varepsilon\ast u^\varepsilon(t,x_i))
\big(\nabla_{x_i}\log {u^\varepsilon}^{\otimes N}-\nabla_{x_i}\Phi^\varepsilon\ast u^\varepsilon(t,x_i) \big) \Big]
\cdot\nabla_{x_i}\log \frac{u_N^\varepsilon}{{u^\varepsilon}^{\otimes N}}\,dx_1\cdots dx_N.
\end{align}
This shows that
\begin{align}\label{relative2}
\frac{d}{dt}\mathcal{H}(u_N^\varepsilon|{u^\varepsilon}^{\otimes N})
=&-\frac{1}{N}\sum_{i=1}^N\int_{\mathbb{R}^{2N}}u_N^\varepsilon\Big|\nabla_{x_i}\log \frac{u_N^\varepsilon}{{u^\varepsilon}^{\otimes N}} \Big|^2\,dx_1\cdots dx_N\nonumber\\
&-\frac{1}{N}\sum_{i=1}^N\int_{\mathbb{R}^{2N}}u_N^\varepsilon\exp\Big(-\frac{1}{N}\sum_{j=1}^N\Phi^\varepsilon(x_i-x_j) \Big)
\Big|\nabla_{x_i}\log\frac{u_N^\varepsilon}{{u^\varepsilon}^{\otimes N}} \Big|^2\,dx_1\cdots dx_N
+I_1+I_2,
\end{align}
where
\begin{align*}
I_1:=&-\frac{1}{N}\sum_{i=1}^N\int_{\mathbb{R}^{2N}}u_N^\varepsilon\exp\Big(-\frac{1}{N}\sum_{j=1}^N\Phi^\varepsilon(x_i-x_j) \Big)\nonumber\\
&\phantom{xxxxxxx}\cdot\Big(\nabla_{x_i}\Phi^\varepsilon\ast u^\varepsilon(t,x_i)-\frac{1}{N}\sum_{j=1}^N\nabla_{x_i}\Phi^\varepsilon(x_i-x_j) \Big)
\cdot\nabla_{x_i}\log \frac{u_N^\varepsilon}{{u^\varepsilon}^{\otimes N}}\,dx_1\cdots dx_N,
\end{align*}
and
\begin{align*}
I_2=:&-\frac{1}{N}\sum_{i=1}^N\int_{\mathbb{R}^{2N}}u_N^\varepsilon
\Big(\exp\Big(-\frac{1}{N}\sum_{j=1}^N\Phi^\varepsilon(x_i-x_j) \Big)-\exp(-\Phi^\varepsilon\ast u^\varepsilon(t,x_i)) \Big)\nonumber\\
&\phantom{xxxxxxx}\cdot \big(\nabla_{x_i}\log {u^\varepsilon}^{\otimes N}-\nabla_{x_i}\Phi^\varepsilon\ast u^\varepsilon (t,x_i)\big)
\cdot \nabla_{x_i}\log \frac{u_N^\varepsilon}{{u^\varepsilon}^{\otimes N}}\,dx_1\cdots dx_N.
\end{align*}

The term $I_1$ can be divided into four parts
\begin{align}\label{relative3}
I_1\leq& \frac{1}{2N}\sum_{i=1}^N\int_{\mathbb{R}^{2N}}u_N^\varepsilon\exp\Big(-\frac{1}{N}\sum_{i=1}^N\Phi^\varepsilon(x_i-x_j) \Big)
\Big|\nabla_{x_i}\log \frac{u_N^\varepsilon}{{u^\varepsilon}^{\otimes N}} \Big|^2\,dx_1\cdots dx_N\nonumber\\
&+C\frac{1}{N}\sum_{i=1}^N\mathbb{E}\big(\big|\nabla_{x_i}\Phi^\varepsilon\ast u^\varepsilon(t,X_{N,i}^{\varepsilon})
-\nabla_{x_i}\Phi^\varepsilon\ast u^\varepsilon(t,\bar{X}_i^\varepsilon) \big|^2 \big)\nonumber\\
&+C\frac{1}{N}\sum_{i=1}^N\mathbb{E}\Big(\Big|\nabla_{x_i}\Phi^\varepsilon\ast u^\varepsilon(t,\bar{X}_i^\varepsilon)
-\frac{1}{N}\sum_{j=1}^N\nabla_{x_i}\Phi^\varepsilon(\bar{X}_i^\varepsilon-\bar{X}_j^\varepsilon) \Big|^2 \Big)\nonumber\\
&+C\frac{1}{N}\sum_{i=1}^N\mathbb{E}\Big(\Big|\frac{1}{N}\sum_{j=1}^N\big|\nabla_{x_i}\Phi^\varepsilon(\bar{X}_i^\varepsilon-\bar{X}_j^\varepsilon)
-\nabla_{x_i}\Phi^\varepsilon(X_{N,i}^\varepsilon-X_{N,j}^\varepsilon)\big| \Big|^2 \Big)\nonumber\\
=:&\frac{1}{2N}\sum_{i=1}^N\int_{\mathbb{R}^{2N}}u_N^\varepsilon\exp\Big(-\frac{1}{N}\sum_{i=1}^N\Phi^\varepsilon(x_i-x_j) \Big)
\Big|\nabla_{x_i}\log \frac{u_N^\varepsilon}{{u^\varepsilon}^{\otimes N}} \Big|^2\,dx_1\cdots dx_N
+I_{11}+I_{12}+I_{13}.
\end{align}
It follows from \eqref{Lp}, the bound for $\nabla\Phi$ in $L^{3/2}(\mathbb{R}^2)$ and Young's evolution inequality that
\begin{align*}
\|D^2\Phi^\varepsilon\ast u^\varepsilon\|_{L^\infty((0,T^*)\times\mathbb{R}^2)}
\leq \|D^2\Phi^\varepsilon\|_{L^{3/2}(\mathbb{R}^2)}\|u^\varepsilon\|_{L^\infty(0,T^*;L^3(\mathbb{R}^2))}
\leq C\|\nabla\Phi\|_{L^{3/2}(\mathbb{R}^2)}\|\nabla j^\varepsilon\|_{L^1(\mathbb{R}^2)}\leq \frac{C}{\varepsilon},
\end{align*}
where $C$ appeared in this section is a positive constant independent of $\varepsilon$ and $N$.
Together with Proposition \ref{mean-field_est_1}, we get for any $\beta\in (0,1)$
\begin{align}\label{relative4}
I_{11}\leq C\|D^2\Phi^\varepsilon\ast u^\varepsilon\|_{L^\infty((0,T^*)\times\mathbb{R}^2)}^2
\max_{i=1,\cdots, N}\mathbb{E}(|X_{N,i}^\varepsilon-\bar{X}_i^\varepsilon|^2)
\leq \frac{C}{\varepsilon^2}N^{-\beta}.
\end{align}
A simple calculation gives
$\|D^2\Phi^\varepsilon\|_{L^\infty(\mathbb{R}^2)}\leq \|\nabla\Phi\|_{L^1(\mathbb{R}^2)}\|\nabla j^\varepsilon\|_{L^\infty(\mathbb{R}^2)}
\leq \frac{C}{\varepsilon^3}$, and combining Proposition \ref{mean-field_est_1}, it holds
\begin{align}\label{relative5}
I_{13}\leq C\|D^2\Phi^\varepsilon\|_{L^\infty(\mathbb{R}^2)}^2
\max_{ i=1,\cdots, N}\mathbb{E}(|X_{N,i}^\varepsilon-\bar{X}_i^\varepsilon|^2)
\leq \frac{C}{\varepsilon^6}N^{-\beta}.
\end{align}
We proceed similarly as \eqref{A3r} to infer that
\begin{align}\label{relative6}
I_{12}\leq C\frac{\|\nabla\Phi^\varepsilon\|^2_{L^\infty(\mathbb{R}^2)}}{N}
\leq \frac{C}{\varepsilon^4}N^{-1},
\end{align}
where we have used
$\|\nabla\Phi^\varepsilon\|_{L^\infty(\mathbb{R}^2)}\leq \|\nabla\Phi\|_{L^1(\mathbb{R}^2)}\|j^\varepsilon\|_{L^\infty(\mathbb{R}^2)}\leq \frac{C}{\varepsilon^2}$.
Inserting \eqref{relative4}-\eqref{relative6} into \eqref{relative3}, we deduce that
\begin{align}\label{relative7}
I_1\leq& \frac{1}{2N}\sum_{i=1}^N\int_{\mathbb{R}^{2N}}u_N^\varepsilon\exp\Big(-\frac{1}{N}\sum_{i=1}^N\Phi^\varepsilon(x_i-x_j) \Big)
\Big|\nabla_{x_i}\log \frac{u_N^\varepsilon}{{u^\varepsilon}^{\otimes N}} \Big|^2\,dx_1\cdots dx_N
+\frac{C}{\varepsilon^6}N^{-\beta}+\frac{C}{\varepsilon^4}N^{-1}.
\end{align}

Now we focus on the term $I_2$.
By the mean value theorem, it holds
\begin{align*}
I_2\leq&\frac{C}{N}\sum_{i=1}^N\int_{\mathbb{R}^{2N}}u_N^\varepsilon
\Big|\frac{1}{N}\sum_{j=1}^N\Phi^\varepsilon(x_i-x_j) -\Phi^\varepsilon\ast u^\varepsilon(t,x_i) \Big|
\Big| \nabla_{x_i}\log \frac{u_N^\varepsilon}{{u^\varepsilon}^{\otimes N}}\Big|\,dx_1\cdots dx_N,
\end{align*}
where we have used $\nabla_{x_i}\log {u^\varepsilon}^{\otimes N}=\nabla_{x_i}\log u^\varepsilon(x_i)$ and
the uniform $L^\infty((0,T^*)\times\mathbb{R}^2)$ bound for $\nabla\log u^\varepsilon$
 together with the fact
$$
\|\nabla\Phi^\varepsilon\ast u^\varepsilon\|_{L^\infty(\mathbb{R}^2)}
\leq \|\nabla\Phi^\varepsilon\|_{L^{3/2}(\mathbb{R}^2)}\|u^\varepsilon\|_{L^3(\mathbb{R}^2)}
\leq C\|\nabla\Phi\|_{L^{3/2}(\mathbb{R}^2)}\|j^\varepsilon\|_{L^1(\mathbb{R}^2)}\leq C.
$$
Due to H\"older's inequality, we have
\begin{align*}
I_2\leq& \frac{1}{2N}\sum_{i=1}^N\int_{\mathbb{R}^{2N}}u_N^\varepsilon\Big|\nabla_{x_i}\log \frac{u_N^\varepsilon}{{u^\varepsilon}^{\otimes N}} \Big|^2\,dx_1\cdots dx_N\nonumber\\
&+\frac{C}{N}\sum_{i=1}^N\mathbb{E}\Big(\Big|\frac{1}{N}\sum_{j=1}^N\big|\Phi^\varepsilon(X_{N,i}^\varepsilon-X_{N,j}^\varepsilon)-
\Phi^\varepsilon(\bar{X}_i^\varepsilon-\bar{X}_j^\varepsilon) \big|\Big|^2 \Big)\nonumber\\
&+\frac{C}{N}\sum_{i=1}^N\mathbb{E}\Big(\Big|\frac{1}{N}\sum_{j=1}^N\big|\Phi^\varepsilon(\bar{X}_i^\varepsilon-\bar{X}_j^\varepsilon)
 -\Phi^\varepsilon\ast u^\varepsilon(\bar{X}_i^\varepsilon)\big|\Big|^2 \Big)\nonumber\\
 &+\frac{C}{N}\sum_{i=1}^N\mathbb{E}\big(|\Phi^\varepsilon\ast u^\varepsilon(\bar{X}_i^\varepsilon)-
 \Phi^\varepsilon\ast u^\varepsilon(X_{N,i}^\varepsilon) |^2 \big).
\end{align*}
In view of mean value theorem and Proposition \ref{mean-field_est_1}, we proceed similarly as \eqref{A3r} to deduce that for any $\beta\in (0,1)$
\begin{align} \label{relative8}
I_2\leq&\frac{1}{2N}\sum_{i=1}^N\int_{\mathbb{R}^{2N}}u_N^\varepsilon\Big|\nabla_{x_i}\log \frac{u_N^\varepsilon}{{u^\varepsilon}^{\otimes N}} \Big|^2\,dx_1\cdots dx_N\nonumber\\
&+C(\|\nabla\Phi^\varepsilon\|_{L^\infty(\mathbb{R}^2)}^2
+\|\nabla\Phi^\varepsilon\ast u^\varepsilon\|_{L^\infty((0,T^*)\times\mathbb{R}^2)}^2 )
\max_{1\leq i\leq N}\mathbb{E}(|X_{N,i}^\varepsilon-\bar{X}_i^\varepsilon|^2)
+C\frac{\|\Phi^\varepsilon\|_{L^\infty(\mathbb{R}^2)}^2}{N}\nonumber\\
\leq &\frac{1}{2N}\sum_{i=1}^N\int_{\mathbb{R}^{2N}}u_N^\varepsilon\Big|\nabla_{x_i}\log \frac{u_N^\varepsilon}{{u^\varepsilon}^{\otimes N}} \Big|^2\,dx_1\cdots dx_N
+\frac{C}{\varepsilon^{4}}N^{-\beta}+\frac{C}{\varepsilon^{4}}N^{-1}+CN^{-\beta}.
\end{align}
Combining \eqref{relative2}, \eqref{relative7} and \eqref{relative8} and keep the leading order terms in $N$, we get
\begin{align}\label{relatvie9}
\frac{d}{dt}\mathcal{H}(u_N^\varepsilon|{u^\varepsilon}^{\otimes N})
\leq&-\frac{1}{2N}\sum_{i=1}^N\int_{\mathbb{R}^{2N}}u_N^\varepsilon\Big|\nabla_{x_i}\log \frac{u_N^\varepsilon}{{u^\varepsilon}^{\otimes N}} \Big|^2\,dx_1\cdots dx_N\nonumber\\
&-\frac{1}{2N}\sum_{i=1}^N\int_{\mathbb{R}^{2N}}u_N^\varepsilon\exp\Big(-\frac{1}{N}\sum_{j=1}^N\Phi^\varepsilon(x_i-x_j) \Big)
\Big|\nabla_{x_i}\log\frac{u_N^\varepsilon}{{u^\varepsilon}^{\otimes N}} \Big|^2\,dx_1\cdots dx_N\nonumber\\
&+\frac{C}{\varepsilon^6}N^{-\beta}+\frac{C}{\varepsilon^{4}}N^{-\beta},
\end{align}
which implies by using the Csisz\'ar-Kullback-Pinsker inequality \cite{Villani02}
\begin{align}\label{relative9}
\sup_{t\in (0,T^*)}\|u^\varepsilon_{N,k}(t)-{u^\varepsilon}^{\otimes k}(t)\|_{L^1(\mathbb{R}^{2k})}^2
&\leq \sup_{t\in (0,T^*)}2\mathcal{H}_k(u_{N,k}^\varepsilon|{u^\varepsilon}^{\otimes k})(t)
\leq \sup_{t\in (0,T^*)} 4k\mathcal{H}(u_N^\varepsilon| {u^\varepsilon}^{\otimes N})(t)\nonumber\\
&\leq \frac{Ck}{\varepsilon^6}N^{-\beta}+\frac{Ck}{\varepsilon^{4}}N^{-\beta}\quad {\mbox{for any }}k>0,
\end{align}
more details can be found in \cite{CHH}.
Recall that $\varepsilon=(\lambda\log N)^{-\frac{1}{4}}$ with $0<\lambda\ll 1$, it holds for any $k\in \N_+$
\begin{align*}
\|u^\varepsilon_{N,k}-{u^\varepsilon}^{\otimes k}\|_{L^\infty(0,T^*;L^1(\mathbb{R}^{2k}))}^2\leq C(k)N^{-\alpha}\quad \mbox{for any } \alpha\in (0,\beta).
\end{align*}
Thus, in accordance with \eqref{a1}, we complete the proof of propagation of chaos in Theorem \ref{relative}.

\section*{Appendix}
In the appendix we present a list of useful lemmas applied in the proofs of this paper.
\begin{lemma}\label{entropymini}{\rm{(The entropy minimization)}}.
	Assume that $\phi$ is a function such that $e^\phi\in L^1(\mathbb{R}^2)$ and $\hat{w}:=\frac{\bar Ce^\phi}{\int_{\mathbb{R}^2}e^\phi\,dx}$,
	where $\bar C$ is a positive arbitrary constant. Let $\mathcal{E}:L^1_+(\mathbb{R}^2)\rightarrow \mathbb{R}\cup \{\infty\}$ be the entropy functional
	\begin{align*}
		\mathcal{E}(w;\phi):=\int_{\mathbb{R}^2}(w\log w-w\phi)\,dx.
	\end{align*}
	Then for any $w\in L_+^1(\R^2)$ satisfying $\int_{\R^2}w(x)\,dx=\bar C$, it holds
	\begin{align*}
		\mathcal{E}(w;\phi)\geq \mathcal{E}(\hat{w};\phi).
	\end{align*}
\end{lemma}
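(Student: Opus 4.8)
The plan is to reduce the statement to the non-negativity of a Kullback--Leibler-type functional by means of the elementary convexity inequality for the map $s\mapsto s\log s$. Set $Z:=\int_{\mathbb{R}^2}e^{\phi}\,dx\in(0,\infty)$, so that $\hat w=\bar C e^{\phi}/Z>0$ almost everywhere and $\int_{\mathbb{R}^2}\hat w\,dx=\bar C$; taking logarithms gives the identity $\log\hat w=\log\bar C+\phi-\log Z$. Substituting this into $\mathcal{E}(\hat w;\phi)=\int_{\mathbb{R}^2}(\hat w\log\hat w-\hat w\phi)\,dx$ and using $\int\hat w=\bar C$, one computes directly that $\mathcal{E}(\hat w;\phi)=\bar C\log\bar C-\bar C\log Z$, a finite real number.

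Next I would invoke the pointwise inequality $a\log(a/b)\ge a-b$, valid for all $a\ge0$ and $b>0$ with the convention $0\log 0=0$; this is simply the statement that $f(t)=t\log t-t+1\ge0$ on $[0,\infty)$, applied with $t=a/b$ and multiplied by $b$. Choosing $a=w(x)$, $b=\hat w(x)$ and inserting once more $\log\hat w=\log\bar C+\phi-\log Z$, the inequality rearranges into the pointwise bound
$$
w\log w-w\phi\;\ge\;w\log\bar C-w\log Z+w-\hat w\qquad\text{a.e. in }\mathbb{R}^2 .
$$
The right-hand side is integrable, and its integral over $\mathbb{R}^2$ equals $\bar C\log\bar C-\bar C\log Z+\bar C-\bar C=\mathcal{E}(\hat w;\phi)$, using only $\int w=\int\hat w=\bar C$.

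Finally I would integrate the displayed inequality. Since its left-hand integrand dominates an integrable function pointwise, the negative part of $w\log w-w\phi$ is dominated by that of the right-hand side and is therefore integrable; hence $\mathcal{E}(w;\phi)$ is well defined as an element of $\mathbb{R}\cup\{\infty\}$, and integration yields $\mathcal{E}(w;\phi)\ge\mathcal{E}(\hat w;\phi)$. The only delicate point --- and really the sole obstacle in an otherwise routine argument --- is this bookkeeping with the value $+\infty$: one must avoid a decomposition that would tacitly require $w\phi\in L^1(\mathbb{R}^2)$, which is why it is cleaner to argue from the pointwise inequality than to split $\mathcal{E}(w;\phi)-\mathcal{E}(\hat w;\phi)$ into the relative entropy $\int_{\mathbb{R}^2} w\log(w/\hat w)\,dx$ at the outset (that identity still holds, with equality in the lemma precisely when $w=\hat w$ a.e., whenever every term involved is finite).
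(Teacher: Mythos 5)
Your argument is correct. Note that the paper itself does not prove this lemma; it simply refers the reader to \cite{CCC}, where the statement is the classical entropy-minimization principle proved via the nonnegativity of the relative entropy $\int_{\R^2} w\log(w/\hat w)\,dx$ (equivalently, Jensen's inequality for the probability density $\hat w/\bar C$). Your pointwise inequality $a\log(a/b)\ge a-b$ is exactly that nonnegativity in local form, so the mathematical content is the same; what your write-up adds is the careful bookkeeping: by bounding the integrand $w\log w - w\phi$ from below by the explicitly integrable function $w\log\bar C - w\log Z + w - \hat w$, you simultaneously show that the negative part of the integrand is integrable (so $\mathcal{E}(w;\phi)$ is well defined in $\R\cup\{\infty\}$ without assuming $w\phi\in L^1(\R^2)$ or $w\log w\in L^1(\R^2)$ separately) and obtain the inequality by integration. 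The computation $\mathcal{E}(\hat w;\phi)=\bar C\log\bar C-\bar C\log\int_{\R^2}e^\phi\,dx$ and the identification of the integral of the minorant with $\mathcal{E}(\hat w;\phi)$ are both correct, so the proof is complete and self-contained.
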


\begin{lemma}\label{Onofri}{\rm{(Onofri's inequality in $\R^2$)}}.
	Let $n$ be a function such that $n\in L^1(\R^2,H(x)\,dx)$ and $|\nabla n|\in L^2(\R^2)$. Then it holds
	\begin{align*}
		\int_{\R^2}e^{n(x)}H(x)\,dx
		\leq \exp\Big\{\int_{\R^2}n(x)H(x)\,dx+\frac{1}{16\pi}\int_{\R^2}|\nabla n(x)|^2\,dx \Big\}.
	\end{align*}
\end{lemma}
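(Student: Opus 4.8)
The plan is to recognize Lemma~\ref{Onofri} as the classical Onofri inequality on the round sphere $S^2$, transplanted to $\R^2$ through stereographic projection, and to deduce it from the sphere version together with the conformal invariance of the Dirichlet energy in dimension two. Throughout, recall that $H(x)=\tfrac1{\pi(1+|x|^2)^2}$, so that $\int_{\R^2}H\,dx=1$, and that $H(x)\,dx$ is precisely the image of the normalised surface measure $\tfrac1{4\pi}\,d\sigma$ on $S^2$ under inverse stereographic projection.

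First I would dispose of trivial cases and reduce to smooth test functions. If $\int_{\R^2}|\nabla n|^2\,dx=\infty$ there is nothing to prove, so assume $|\nabla n|\in L^2(\R^2)$ and $n\in L^1(\R^2,H\,dx)$. A standard truncation-plus-mollification argument produces a sequence $n_k\in C_c^\infty(\R^2)$ with $\nabla n_k\to\nabla n$ in $L^2(\R^2)$, $n_k\to n$ in $L^1(\R^2,H\,dx)$, and $n_k\to n$ almost everywhere. Applying Fatou's lemma on the left-hand side and passing to the limit in the (linear and quadratic) terms on the right-hand side, it suffices to establish the stated inequality for $n\in C_c^\infty(\R^2)$.

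Next I would transport the problem to $S^2$. Let $\Pi:\R^2\to S^2\setminus\{N\}$ be the inverse stereographic projection, normalised so that $\Pi^*g_{S^2}=\tfrac{4}{(1+|x|^2)^2}\,g_{\R^2}$; then the volume element of $S^2$ pulls back to $\tfrac{4}{(1+|x|^2)^2}\,dx=4\pi\,H(x)\,dx$. Setting $f:=n\circ\Pi^{-1}\in C^\infty(S^2)$, one has the three identities
\begin{align*}
\int_{\R^2}e^{n}H\,dx=\frac{1}{4\pi}\int_{S^2}e^{f}\,d\sigma,\qquad
\int_{\R^2}nH\,dx=\frac{1}{4\pi}\int_{S^2}f\,d\sigma,\qquad
\int_{\R^2}|\nabla n|^2\,dx=\int_{S^2}|\nabla_{S^2}f|^2\,d\sigma,
\end{align*}
the last of which expresses the conformal invariance of the Dirichlet integral in two dimensions (the conformal factor coming from the inverse metric cancels the one from the volume form). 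The classical Onofri inequality on the sphere, see \cite{CCC} and the references therein, then gives
\begin{align*}
\log\Big(\frac{1}{4\pi}\int_{S^2}e^{f}\,d\sigma\Big)\le \frac{1}{4\pi}\int_{S^2}f\,d\sigma+\frac{1}{16\pi}\int_{S^2}|\nabla_{S^2}f|^2\,d\sigma .
\end{align*}
Substituting the three identities and exponentiating yields exactly the assertion for smooth $n$, and the approximation step of the previous paragraph then delivers the general case.

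The genuine content is entirely contained in the sphere inequality — the two-dimensional endpoint of the Moser--Trudinger family — which is the step I would simply cite; once it is granted, the only things to verify are the elementary conformal change-of-variables identities and the (routine) density argument. One small point to check there is that the stated class ``$n\in L^1(H\,dx)$, $\nabla n\in L^2$'' really does correspond to $H^1(S^2)$, which it does by the Poincar\'e inequality on the compact manifold $S^2$. If a self-contained argument were demanded instead, I would reproduce one of the standard proofs of the sphere inequality — for instance the competing-symmetries/rearrangement proof, or the monotonicity of the free energy along the fast-diffusion flow — but this would considerably lengthen the appendix and is not needed for the purposes of this paper.
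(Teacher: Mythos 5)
Your proposal is correct in substance, and it in fact supplies more detail than the paper itself does: for Lemma \ref{Onofri} the paper gives no argument at all, referring the reader to \cite{CCC}, where the Euclidean Onofri inequality is likewise imported from the classical literature. Your route --- pulling the inequality back to $S^2$ by stereographic projection, using that $H(x)\,dx$ is the push-forward of the normalized surface measure and that the Dirichlet integral is conformally invariant in two dimensions, and then invoking Onofri's inequality on the sphere, whose constant in this normalization is indeed $\frac{1}{16\pi}$ --- is exactly the standard derivation behind that citation, and the three change-of-variables identities you record are right. Two technical points deserve more care than ``standard truncation-plus-mollification''. First, for a function with only $\nabla n\in L^2(\R^2)$ and $n\in L^1(\R^2,H\,dx)$, the naive cutoff $\chi_R n$ need not satisfy $n\nabla\chi_R\to 0$ in $L^2(\R^2)$, since such $n$ may grow like $(\log|x|)^{\alpha}$ with $\alpha<1/2$; one should use a logarithmic cutoff, or bypass the Euclidean density step entirely by verifying directly that $f=n\circ\Pi^{-1}\in H^1(S^2)$ and applying the sphere inequality on all of $H^1(S^2)$. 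Second, in that identification one must note that the north pole is a single point, hence of zero $H^1$-capacity in dimension two, so the distributional gradient of $f$ on all of $S^2$ is the $L^2$ field corresponding to $\nabla n$; combined with $f\in L^1(S^2)$ and local Poincar\'e--Sobolev estimates this yields $f\in H^1(S^2)$, as you indicate. With these routine repairs the argument is complete, modulo the spherical Onofri inequality, which it is entirely reasonable to cite, just as the paper and \cite{CCC} do.
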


\begin{lemma}\label{f-}{\rm{(Estimate for $f(\log f)_-$).}}
	Let $\phi$ be a function such that $e^\phi\in L^1(\mathbb{R}^2)$ and $f$ be a non-negative function such that
	$(f\chi_{\{f\leq 1\}})\in L^1(\mathbb{R}^2)\cap L^1(\mathbb{R}^2;|\phi(x)|dx)$.
	Then there exists a constant $\hat C>0$ independent of $f$ such that
	\begin{align*}
		\int_{\mathbb{R}^2}f(\log f)_-\,dx\leq \hat C-\int_{\{f\leq 1\}}f(x)\phi(x)\,dx.
	\end{align*}
\end{lemma}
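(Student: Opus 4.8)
The plan is to deduce the estimate from a single elementary pointwise inequality and then integrate. First I would record that, since $(\log f)_-$ is supported on $\{f\le 1\}$ (where $-\log f\ge 0$), the left-hand side equals $\int_{\{f\le 1\}} f(-\log f)\,dx$, so the asserted estimate is equivalent to
\[
\int_{\{f\le 1\}} f\bigl(\phi-\log f\bigr)\,dx\le \hat C .
\]

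The main step is a Young-type bound: for every $a\ge 0$ and $b\in\mathbb{R}$ (with the usual convention $0\log 0=0$) one has $ab-a\log a\le e^{b-1}$. This follows by maximizing the concave function $a\mapsto ab-a\log a$, whose maximum $e^{b-1}$ is attained at $a=e^{b-1}$. Taking $a=f(x)$ and $b=\phi(x)$ yields the pointwise inequality $f(\phi-\log f)\le e^{\phi-1}$ a.e.\ on $\mathbb{R}^2$, in particular on $\{f\le 1\}$.

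It then remains to integrate over $\{f\le 1\}$, which is where the integrability hypotheses are used. The assumption $f\chi_{\{f\le 1\}}\in L^1(\mathbb{R}^2;|\phi|\,dx)$ makes $f\phi$ integrable there; since $-f\log f\ge 0$ on $\{f\le 1\}$, the integrand $f(\phi-\log f)$ is bounded below by the integrable function $f\phi$, so its integral is well defined in $(-\infty,+\infty]$ and one may integrate the pointwise bound. This gives
\[
\int_{\{f\le 1\}} f(\phi-\log f)\,dx\le \int_{\{f\le 1\}} e^{\phi-1}\,dx\le e^{-1}\int_{\mathbb{R}^2} e^{\phi}\,dx,
\]
and because $e^{\phi}\in L^1(\mathbb{R}^2)$ the constant $\hat C:=e^{-1}\|e^{\phi}\|_{L^1(\mathbb{R}^2)}$ is finite and depends only on $\phi$, not on $f$. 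As a byproduct this also shows that $\int_{\mathbb{R}^2} f(\log f)_-\,dx$ is finite under the stated hypotheses.

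I do not expect a genuine obstacle here: the only point requiring care is the bookkeeping of integrability — ensuring $\int_{\{f\le 1\}}f\phi\,dx$ is meaningful and that the pointwise inequality may be integrated over the possibly infinite-measure set $\{f\le 1\}$ — and this is exactly what the hypothesis $f\chi_{\{f\le 1\}}\in L^1(\mathbb{R}^2)\cap L^1(\mathbb{R}^2;|\phi|\,dx)$ is designed to provide. Everything else reduces to the one-line maximization above.
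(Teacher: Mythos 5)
Your proof is correct: the Young-type bound $ab-a\log a\le e^{b-1}$ (equivalently $a\log(c/a)\le c/e$) applied pointwise with $a=f$, $b=\phi$ on $\{f\le 1\}$, together with the integrability of $f\phi$ there, gives exactly the stated estimate with $\hat C=e^{-1}\|e^\phi\|_{L^1(\mathbb{R}^2)}$. The paper itself does not prove this lemma but refers to \cite{CCC}, and your argument is essentially the standard proof given there, so there is nothing further to add.
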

For a rigorous proof of Lemmas \ref{entropymini}-\ref{f-}, the reader is referred to \cite{CCC}.
\vskip5mm
Furthermore, we need to use the following version of the Gagliardo-Nirenberg inequality to proceed further $L^p$ estimate of $u^\varepsilon$. Although the proof is similar to \cite{TW2}, in which the result for the bounded domain was derived,
we present a detialed proof for the convenience of the reader.
\begin{lemma}{\rm{(Generalization of the Gagliardo-Nirenberg inequality).}}\label{GN}
	Let $q\in (1,\infty)$, $r\in (0,q)$ and $\alpha>0$. Then for any $\delta>0$, there exists $C_\delta>0$ such that
	\begin{align*}
		\|\varphi\|_{L^q(\R^2)}^q\leq \delta \|\nabla\varphi\|_{L^2(\R^2)}^{q-r}\|\varphi(\log |\varphi|)^\alpha\|_{L^r(\R^2)}^r
		+C_\delta\|\varphi\|_{L^r(\R^2)}^r,
	\end{align*}
	for any $\varphi\in L^r(\R^2)$ satisfying $\nabla \varphi\in L^2(\R^2)$ and $\varphi(\log |\varphi|)^\alpha\in L^r(\R^2)$.
\end{lemma}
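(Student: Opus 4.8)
The plan is to prove the estimate by a level-set decomposition at a threshold $K=K(\delta)\ge e$ that will be pushed to infinity as $\delta\to 0$, following the bounded-domain strategy of \cite{TW2}. Write
\[
\|\varphi\|_{L^q(\R^2)}^q=\int_{\{|\varphi|\le 2K\}}|\varphi|^q\,dx+\int_{\{|\varphi|>2K\}}|\varphi|^q\,dx=:A+B .
\]
On the low set one has $|\varphi|^q\le(2K)^{q-r}|\varphi|^r$, so $A\le(2K)^{q-r}\|\varphi\|_{L^r(\R^2)}^r$; this will become the remainder term, with $C_\delta:=(2K(\delta))^{q-r}$.

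For the high part $B$ I would compare $\varphi$ with the truncation $\psi:=(|\varphi|-K)_+$, which satisfies $\psi\in L^r(\R^2)$, $|\nabla\psi|\le|\nabla\varphi|\,\mathbf{1}_{\{|\varphi|>K\}}$ a.e.\ (hence $\nabla\psi\in L^2(\R^2)$), and, on $\{|\varphi|>2K\}$, $\psi>K$ and therefore $|\varphi|=\psi+K<2\psi$; consequently $B\le 2^q\|\psi\|_{L^q(\R^2)}^q$. To the latter I apply the Gagliardo-Nirenberg inequality in $\R^2$ with $\theta=(q-r)/q\in(0,1)$,
\[
\|\psi\|_{L^q(\R^2)}^q\le C_0\,\|\nabla\psi\|_{L^2(\R^2)}^{q-r}\|\psi\|_{L^r(\R^2)}^{r}\le C_0\,\|\nabla\varphi\|_{L^2(\R^2)}^{q-r}\|\psi\|_{L^r(\R^2)}^{r} .
\]
The decisive step is then to extract a factor $(\log K)^{-\alpha r}$ from $\|\psi\|_{L^r}^r$: since $\psi$ is supported in $\{|\varphi|>K\}$, where $\log|\varphi|>\log K\ge 1$,
\[
\|\psi\|_{L^r(\R^2)}^r\le\int_{\{|\varphi|>K\}}|\varphi|^r\,dx\le(\log K)^{-\alpha r}\int_{\{|\varphi|>K\}}|\varphi|^r(\log|\varphi|)^{\alpha r}\,dx\le(\log K)^{-\alpha r}\,\|\varphi(\log|\varphi|)^\alpha\|_{L^r(\R^2)}^r .
\]
Combining the last displays gives $B\le 2^qC_0(\log K)^{-\alpha r}\|\nabla\varphi\|_{L^2}^{q-r}\|\varphi(\log|\varphi|)^\alpha\|_{L^r}^r$; choosing $K=K(\delta)\ge e$ so large that $2^qC_0(\log K)^{-\alpha r}\le\delta$ and adding the bounds for $A$ and $B$ yields the claim.

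The only genuinely delicate point I anticipate is the use of the Gagliardo-Nirenberg inequality in the quasi-norm range $r<1$, which does occur in the application (there $\varphi=(u^\varepsilon)^{p/2}$ and $r=2/p$ with $p\ge 2$): in that case I would first invoke the classical inequality with exponent $1$, $\|\psi\|_{L^q}^q\le C\|\nabla\psi\|_{L^2}^{q-1}\|\psi\|_{L^1}$, then interpolate $\|\psi\|_{L^1}\le\|\psi\|_{L^r}^{1-s}\|\psi\|_{L^q}^{s}$ with $s=\frac{q(1-r)}{q-r}\in(0,1)$, and absorb $\|\psi\|_{L^q}^{s}$ into the left-hand side to recover exactly the power $q-r$ on the gradient and $r$ on the $L^r$-factor. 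Everything else is routine bookkeeping of the two regions; the logarithmic weight serves precisely to furnish the gain $(\log K)^{-\alpha r}$ that converts the fixed Gagliardo-Nirenberg constant $C_0$ into the arbitrarily small prefactor $\delta$.
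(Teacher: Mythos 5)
Your proof is correct and follows essentially the same strategy as the paper: truncate at a large level, apply the Gagliardo--Nirenberg inequality to the truncated function $(|\varphi|-K)_+$ (the paper uses a piecewise-linear cut-off $f(\varphi)$ vanishing below $N$ instead), extract the factor $(\log K)^{-\alpha r}$ from the logarithmic weight on the set where $|\varphi|>K$, and absorb the low part into $C_\delta\|\varphi\|_{L^r}^r$. The only substantive difference is your extra care with the quasi-norm range $r<1$, which the paper handles by simply invoking the Gagliardo--Nirenberg inequality for all $r\in(0,q)$.
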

\begin{proof}
	Taking into account the Gagliardo-Nirenberg inequality, we have
	\begin{align*}
		\|\varphi\|_{L^q(\R^2)}^q\leq \tilde C\|\nabla \varphi\|_{L^2(\R^2)}^{q-r}\|\varphi\|_{L^r(\R^2)}^r,
	\end{align*}
	where $\tilde C$ is a positive constant. We introduce $f\in W^{1,\infty}_{loc}(\R)$
	\begin{align*}
		f(s)=\begin{cases}
			0,\quad {\rm{if}}\;|s|\leq N,    \\
			2(|s|-N),\quad {\rm{if}}\;N<|s|<2N, \\
			|s|, \quad {\rm{if}}\;|s|\geq 2N.
		\end{cases}
	\end{align*}
	It is easy to see $0\leq f(s)\leq |s|$ and $|f'(s)|\leq 2$ for a.e. $s\in \R$.
	It follows that
	\begin{align*}
		&\|f(\varphi)\|_{L^r(\R^2)}^q\leq \|\varphi\|_{L^r(\R^2)}^q,\\
		&\|\nabla f(\varphi)\|_{L^2(\R^2)}^{q-r}=\|f'(\varphi)\nabla\varphi\|_{L^2(\R^2)}^{q-r}\leq 2^{q-r}\|\nabla\varphi\|_{L^2(\R^2)}^{q-r},\\
		&\|f(\varphi)\|_{L^r(\R^2)}^r\leq \int_{\{|\varphi|>N\}}|f(\varphi)|^r\,dx
		\leq \int_{\{|\varphi|>N\}}|\varphi|^r\frac{(\log |\varphi|)^{\alpha r}}{(\log |\varphi|)^{\alpha r}}\,dx
		\leq \frac{1}{(\log N)^{\alpha r}}\|\varphi(\log|\varphi|)^\alpha\|_{L^r(\R^2)}^r.
	\end{align*}
	Due to $0\leq |s|-f(s)\leq 2N$ on $\R$, we derive that
	\begin{align*}
		\||\varphi|-f(\varphi)\|_{L^q(\R^2)}^q
		\leq (2N)^{q-r}\int_{\R^2}||\varphi|-f(\varphi)|^r\,dx
		\leq (2N)^{q-r}\int_{\R^2}|\varphi|^r\,dx
		=(2N)^{q-r}\|\varphi\|_{L^r(\R^2)}^r.
	\end{align*}
	Summarizing, this shows that
	\begin{align*}
		\|\varphi\|_{L^q(\R^2)}^q
		&\leq 2^q\|f(\varphi)\|_{L^q(\R^2)}^q+2^q\||\varphi|-f(\varphi)\|_{L^q(\R^2)}^q\\
		&\leq 2^q\tilde C\|\nabla f(\varphi)\|_{L^2(\R^2)}^{q-r}\|f(\varphi)\|_{L^r(\R^2)}^r
		+2^q\||\varphi|-f(\varphi)\|_{L^q(\R^2)}^q\\
		&\leq \frac{ 2^{2q-r}\tilde C}{(\log N)^{\alpha r}}\|\nabla\varphi\|_{L^2(\R^2)}^{q-r}\|\varphi(\log|\varphi|)^\alpha\|_{L^r(\R^2)}^r
		+2^q(2N)^{q-r}\|\varphi\|_{L^r(\R^2)}^r\\
		&\leq \delta\|\nabla\varphi\|_{L^2(\R^2)}^{q-r}\|\varphi(\log |\varphi|)^\alpha\|_{L^r(\R^2)}^r
		+C_\delta\|\varphi\|_{L^r(\R^2)}^r,
	\end{align*}
	where we take $N>1$ large enough such that $\frac{2^{2q-r}\tilde C}{(\log N)^{\alpha r}}\leq \delta$.
\end{proof}

\begin{lemma} \label{liminf_ulogu}
Let $(u_n)_{n \in \N} \subset L^2(\R^2)$ be a non-negative sequence and $u_n\rightarrow u$ in $L^2(\R^2)$ as $n\rightarrow \infty$.
We assume that there exists a positive constant $C$ independent of $n$ such that, for any $n \in \N$,
$$ \int_{\R^2} u_n(x) |x|^2 \, dx  \leq C \mbox{ and } \int_{\R^2} u_n |\log u_n|(x) \, dx \leq C. $$
Then $u \log u \in L^1(\R^2)$ and up to a subsequence (not relabeled)
$$\int_{\R^2} u \log u(x) \, dx\leq \liminf_{n \to \infty} \int_{\R^2} u_n \log u_n(x) \, dx.$$
\end{lemma}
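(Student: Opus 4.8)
The plan is to split $u_n\log u_n=u_n(\log u_n)_+-u_n(\log u_n)_-$ and to treat the positive part by lower semicontinuity and the negative part by a uniform tail estimate exploiting the second moment bound. Since $u_n\to u$ in $L^2(\R^2)$, I would first pass to a subsequence (which we do not relabel) so that $u_n\to u$ a.e. in $\R^2$; as $t\mapsto t(\log t)_\pm$ are continuous on $[0,\infty)$ and vanish at $t=0$, this gives $u_n(\log u_n)_\pm\to u(\log u)_\pm$ a.e. For the positive part, $u_n(\log u_n)_+\ge 0$ and $\int_{\R^2}u_n(\log u_n)_+\,dx\le\int_{\R^2}u_n|\log u_n|\,dx\le C$, so Fatou's lemma yields $\int_{\R^2}u(\log u)_+\,dx\le\liminf_{n\to\infty}\int_{\R^2}u_n(\log u_n)_+\,dx\le C$; in particular $u(\log u)_+\in L^1(\R^2)$.

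The crux is the negative part: although $0\le t(\log t)_-\le 1/e$ for all $t\ge0$, this bound is not integrable over the plane, so the second moment must be used to control $u_n$ at spatial infinity. Fix $\delta\in(0,\tfrac12)$. An elementary computation gives $t^\delta\log(1/t)\le\tfrac{1}{e\delta}$ for $t\in(0,1]$, hence $t(\log t)_-\le\tfrac{1}{e\delta}\,t^{1-\delta}$ there; writing $t^{1-\delta}=(t|x|^2)^{1-\delta}|x|^{-2(1-\delta)}$ and applying Hölder's inequality with exponents $\tfrac1{1-\delta}$ and $\tfrac1\delta$, one obtains
\begin{align*}
\int_{\{|x|>R\}}u_n(\log u_n)_-\,dx
\le\frac{1}{e\delta}\int_{\{|x|>R\}}u_n^{1-\delta}\,dx
\le\frac{1}{e\delta}\Big(\int_{\R^2}u_n|x|^2\,dx\Big)^{1-\delta}\Big(\int_{\{|x|>R\}}|x|^{-\frac{2(1-\delta)}{\delta}}\,dx\Big)^{\delta}
\le C_\delta R^{4\delta-2},
\end{align*}
where the spatial integral is finite precisely because $\delta<\tfrac12$, and $C_\delta$ does not depend on $n$. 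Thus $\sup_n\int_{\{|x|>R\}}u_n(\log u_n)_-\,dx\to0$ as $R\to\infty$, and by Fatou the same decay holds for $\int_{\{|x|>R\}}u(\log u)_-\,dx$.

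To finish the negative part, on each fixed ball $B_R$ the integrands are bounded by $1/e$ and converge a.e., so dominated convergence gives $\int_{B_R}u_n(\log u_n)_-\,dx\to\int_{B_R}u(\log u)_-\,dx$; combined with the uniform tail estimate this yields $\int_{\R^2}u_n(\log u_n)_-\,dx\to\int_{\R^2}u(\log u)_-\,dx<\infty$, so $u(\log u)_-\in L^1(\R^2)$ and hence $u\log u\in L^1(\R^2)$. Finally, since the negative parts converge while the positive parts are merely lower semicontinuous,
\begin{align*}
\int_{\R^2}u\log u\,dx
=\int_{\R^2}u(\log u)_+\,dx-\int_{\R^2}u(\log u)_-\,dx
\le\liminf_{n\to\infty}\Big(\int_{\R^2}u_n(\log u_n)_+\,dx-\int_{\R^2}u_n(\log u_n)_-\,dx\Big)
=\liminf_{n\to\infty}\int_{\R^2}u_n\log u_n\,dx,
\end{align*}
which is the assertion. (If the $\liminf$ is to be read along the original sequence, one starts the argument from a subsequence realizing that $\liminf$ and then extracts an a.e.-convergent sub-subsequence.) The main obstacle is exactly the negative part at infinity: the only information tying $u_n$ to spatial decay is the second moment bound, and the interpolation above — together with the restriction $\delta<\tfrac12$ forced by the two-dimensional volume growth — is what makes the tails uniformly small.
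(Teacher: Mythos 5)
Your proof is correct, including the delicate points: passing to an a.e.-convergent subsequence, the uniform-in-$n$ tail bound $\int_{\{|x|>R\}}u_n(\log u_n)_-\,dx\le C_\delta R^{4\delta-2}$ obtained from $t(\log t)_-\le\tfrac{1}{e\delta}t^{1-\delta}$ and H\"older against the second moment (the restriction $\delta<\tfrac12$ is exactly what makes $|x|^{-2(1-\delta)/\delta}$ integrable at infinity in two dimensions), and the final bookkeeping $\liminf(a_n-b_n)=\liminf a_n-\lim b_n$ when $b_n$ converges. It is, however, a genuinely different route from the paper's. The paper rewrites $u_n\log u_n=h\big(u_ne^{|x|}\big)e^{-|x|}-u_n|x|+u_n-e^{-|x|}$ with $h(s)=s\log s-s+1\ge 0$, applies Fatou to the non-negative $h$-term (and to $u_n$, $u_n|x|^2$), and isolates the single wrong-sign term $-u_n|x|$, for which it proves strong $L^1$ convergence $u_n|x|\to u|x|$ by splitting into $B_R(0)$ (where the $L^2$ convergence is used) and its complement (where the second moment gives a $C/R$ tail). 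Your positive/negative-part decomposition replaces this relative-entropy-type rewriting by uniform smallness of $u_n(\log u_n)_-$ outside large balls plus dominated convergence on balls; as a by-product you obtain the slightly stronger conclusion that $\int_{\R^2}u_n(\log u_n)_-\,dx$ actually converges, with a quantitative tail rate, whereas the paper's argument instead yields convergence of $\int_{\R^2}u_n|x|\,dx$. Both arguments use exactly the stated hypotheses and are of comparable length.
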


\begin{proof}
Define
$$ h(s):=s \log s -s +1, \quad s \geq 0,$$
which allows us to rewrite $u_n\log u_n$ as follows
\begin{align*}
u_n \log  u_n & = h \left(\frac{u_n}{e^{-|x|}} \right) e^{-|x|} + u_n \log (e^{-|x|}) + u_n - e^{-|x|} \\
& = h \left(\frac{u_n}{e^{-|x|}} \right) e^{-|x|} + (- u_n |x|) + u_n - e^{-|x|} .
\end{align*}
It follows from the assumptions that $u_n$ and $u_n |x|$ are uniformly bounded in $L^1(\R^2)$ (Soon we will see a similar argument, so we omit it here).
We combine this result and $u_n \log  u_n, e^{-|x|} \in L^1(\R^2)$ to get $h \left(\frac{u_n}{e^{-|x|}} \right) e^{-|x|} \in  L^1(\R^2)$ as well.
Since $u_n$ converges strongly to $u$ in $L^2(\R^2)$, we obtain that $u_n(x)\rightarrow u(x)$ for a.e. $x\in \R^2$.
In view of the non-negativity and continuity of $h$, we apply  Fatou's lemma to conclude
$\int_{\R^2}h(\frac{u}{e^{-|x|}})e^{-|x|}dx\leq \liminf_{n\rightarrow \infty}\int_{\R^2}h(\frac{u_n}{e^{-|x|}})e^{-|x|}dx$.
Due to the a.e. convergence of $u_n$ and Fatou's lemma, we have $\int_{\R^2}u dx\leq \liminf_{n\rightarrow \infty}\int_{\R^2}u_n dx$.
A similar argument proves that $\int_{\R^2}u|x|^2 dx\leq \liminf_{n\rightarrow \infty}\int_{\R^2}u_n|x|^2 dx$.
The only term which can't be treated in this way, due to the wrong sign, is $- u_n |x|$. We now discuss its convergence.
For every $R>0$, it holds
\begin{align*}
& \int_{\R^2} \big|u_n(x)|x| -u(x)|x|\big| \, dx \\
 \leq & \int_{B_R(0)} |u_n(x) -u(x)||x| \, dx  + \int_{B_R(0)^c} \frac{1}{|x|}|u_n(x) - u(x)||x|^2 \, dx \\
 \leq & R  \left| B_R(0) \right|^{\frac{1}{2}}  \Big( \int_{B_R(0)} |u_n(x) -u(x)|^2 \, dx \Big)^{\frac{1}{2}}
 + \frac{1}{R} \Big( \int_{B_R(0)^c} |u_n(x)||x|^2 \, dx + \int_{B_R(0)^c} |u(x)||x|^2 \, dx \Big) \\
 \leq & R  \left| B_R(0) \right|^{\frac{1}{2}} \Vert u_n -u \Vert_{L^2(\R^2)}  + \frac{C}{R}.
\end{align*}
Therefore, we infer that $u|x| \in L^1(\R^2)$ and
$$0 \leq  \limsup_{n \to \infty} \Vert u_n|x| -u|x| \Vert_{L^1(\R^2)} \leq \frac{C}{R} \rightarrow 0\mbox{ as }R\rightarrow \infty,$$
which implies $\lim_{n\rightarrow \infty}\int_{\R^2} -u_n(x)|x| \, dx =   \int_{\R^2}  -u(x)|x| \, dx$.\\
Summarizing, we deduce that
\begin{align*}
\infty > \liminf_{n \to \infty} \int_{\R^2} u_n \log u_n(x) \, dx \geq &\int_{\R^2} h \left(\frac{u}{e^{-|x|}} \right) e^{-|x|} \, dx  + \int_{\R^2} (- u |x|) \, dx  + \int_{\R^2} u \, dx  - \int_{\R^2} e^{-|x|} \, dx\\
=& \int_{\R^2} u \log u(x) \, dx,
\end{align*}
where we have used $u \log  u \in L^1(\R^2)$.
\end{proof}

\medskip
\indent
{\bf Acknowledgements:}
Yue Li acknowledge partial support from the Austrian Science Fund (FWF), grants P33010 and F65. This work has received funding from DFG grant CH955/8-1 and the European Research Council (ERC) under the European Union's Horizon 2020 research and innovation programme, ERC Advanced Grant no.~101018153.
The authors thank the reviewers for their constructive comments and helpful suggestions.

\end{document}